\newtheorem{theorem}{Theorem}[section]
\newtheorem{fact}[theorem]{Fact}
\newtheorem{lemma}[theorem]{Lemma}
\newtheorem{corollary}[theorem]{Corollary}
\newtheorem{proposition}[theorem]{Proposition}
\newtheorem{conjecture}[theorem]{Conjecture}
\theoremstyle{definition}
\newtheorem{definition}[theorem]{Definition}
\newtheorem{remark}[theorem]{Remark}
\newtheorem{question}[theorem]{Question}
\newcommand{\abar}{\bar{a}}
\newcommand{\bbar}{\bar{b}}
\newcommand{\cbar}{\bar{c}}
\newcommand{\xbar}{\bar{x}}
\newcommand{\ybar}{\bar{y}}
\def\seq{\subseteq}
\def\feq{\textit{feq}}
\def\df{\textit{dfs}}
\def\eq{\textit{eq}}
\def\Def{\operatorname{Def}}
\def\tp{\operatorname{tp}}
\def\triv{\operatorname{tr}}
\def\dfs{\operatorname{dfs}}
\def\fa{\operatorname{fam}}
\def\fim{\operatorname{fim}}
\def\Av{\operatorname{Av}}
\def\NIP{\operatorname{NIP}}
\def\NTP{\operatorname{NTP}}
\def\NSOP{\operatorname{NSOP}}
\def\TP{\operatorname{TP}}
\def\N{\mathbb{N}}
\def\R{\mathbb{R}}
\def\fM{\mathfrak{M}}
\def\cL{\mathcal{L}}
\def\cU{\mathcal{U}}
\newcommand{\Fraisse}{Fra\"{i}ss\'{e}}
\newcommand{\claim}{\hfill$\dashv_{\text{\scriptsize{claim}}}$}
\newcommand{\inv}{^{\text{-}1}}
\def\Ind{\setbox0=\hbox{$x$}\kern\wd0\hbox to 0pt{\hss$\mid$\hss}
\lower.9\ht0\hbox to 0pt{\hss$\smile$\hss}\kern\wd0}
\def\Notind{\setbox0=\hbox{$x$}\kern\wd0\hbox to 0pt{\mathchardef
\nn=12854\hss$\nn$\kern1.4\wd0\hss}\hbox to
0pt{\hss$\mid$\hss}\lower.9\ht0 \hbox to 0pt{\hss$\smile$\hss}\kern\wd0}
\def\ind{\mathop{\mathpalette\Ind{}}}
\title{Remarks on generic stability in independent theories}
\author[G. Conant]{Gabriel Conant}
\author[K. Gannon]{Kyle Gannon}
\date{November 25, 2019}
\address{Department of Mathematics\\
University of Notre Dame\\
Notre Dame, IN, 46656, USA}
\email{gconant@nd.edu, kgannon1@nd.edu}
\begin{document}

\begin{abstract}
In NIP theories, generically stable Keisler measures can be characterized in several ways. We analyze these various forms of ``generic stability" in arbitrary theories.  Among other things, we show that the standard definition of generic stability for types coincides with the notion of a frequency interpretation measure. We also give combinatorial examples of types in $\NSOP$ theories that are finitely approximated but not generically stable, as well as $\phi$-types in simple theories that are definable and finitely satisfiable in a small model, but not finitely approximated. Our proofs demonstrate interesting connections to classical results from Ramsey theory for finite graphs and hypergraphs.
\end{abstract}

\maketitle
\vspace{-15pt}

\section{Introduction}

An extremely useful characterization of stability for a complete theory is that any global type is definable and finitely satisfiable in some small model. On the other hand, the class of stable theories is highly restrictive, and a great deal of current research in model theory has focused on finding stable-like phenomena in unstable environments. In NIP theories, although not every type is necessarily definable and finitely satisfiable,  the class of types with these properties is still quite resilient, and such types are now referred to as \emph{generically stable}.

Generically stable types in NIP theories were first identified by Shelah \cite{ShGS}, and then thoroughly studied by Hrushovski and Pillay \cite{HP} and Usvyatsov \cite{UsvyGS}. This investigation was extended to \emph{Keisler measures} in NIP theories  in \cite{HPP} and \cite{HP}, culminating in the work of Hrushovski, Pillay, and Simon \cite{HPS} where \emph{generically stable} Keisler measures were defined. It is shown in \cite{HPS} that a global Keisler measure $\mu$ is definable and finitely satisfiable in a small model if and only if it is \emph{finitely approximated}, i.e., when restricted to  a single formula, $\mu$ is a uniform limit of average frequency measures. Moreover, the ``test points" for these  averages can be chosen from definable sets of almost full measure with respect to products of $\mu$, which leads to the notion of a \emph{frequency interpretation measure} (Definition \ref{def:fim}).

A standard hypothesis in the NIP setting is that definability and finite satisfiability (in a small model) are opposite extremes on the spectrum of invariant types and measures, and so the synthesis of both properties forms a stable refuge in an unstable world. So it is not unreasonable to explore a similar motif beyond NIP theories, and especially in other tame regions like simplicity or $\NTP_2$.

In this paper, we study the above forms of ``generic stability" in the wilderness outside of NIP. Generically stable types in arbitrary theories were defined by Pillay and Tanovi\'{c} in \cite{PiTa} and, in Section \ref{sec:gs}, we reconcile this definition with the setting of measures. Specifically, we show that a global type is generically stable if and only if it is a frequency interpretation measure (Proposition \ref{prop:PTgs}), which establishes a concrete connection between generic stability for measures in NIP theories and for types in arbitrary theories. We also give a characterization of generic stability for types in $\NTP_2$ theories (Theorem \ref{thm:NTP2}), which is formulated purely in terms of forking, and is reminiscent of a similar characterization in the NIP setting. 

In Section \ref{sec:dfs}, we analyze theories in which every \df\ (i.e., definable and finitely satisfiable in a small model) Keisler measure is trivial (we call such theories \emph{dfs-trivial}). We show that  \df-triviality reduces to measures in one variable (Proposition \ref{prop:onevbl}), and that \df-nontriviality is preserved in reducts (Theorem \ref{thm:reduct}). Finally, we give examples of \df-trivial theories, including the theory of the random graph, the theory $T^r_s$ of the generic $K^r_s$-free $r$-hypergraph for $s>r\geq 3$, and the theory $T^*_{\feq}$ of a generic parameterized equivalence relation (Corollary \ref{cor:PIex}).

We then turn to the classes of \df\ measures, \emph{finitely approximated} measures, and \emph{frequency interpretation} measures (listed in increasing strength). As these three classes coincide in NIP theories, we focus on separating them in general theories. For instance, the question of whether finitely approximated measures coincide with frequency interpretation measures in arbitrary theories was asked by Chernikov and Starchenko in \cite[Remark 3.6]{ChStNIP}, and the examples below give a negative answer.

 In Section \ref{sec:ex}, we first recall an example, due to Adler, Casanovas, and Pillay \cite{ACPgs}, of a theory with a generically stable global type $p$ such that $p\otimes p$ is not generically stable. This theory is a variant of $T^*_{\feq}$ in which equivalence classes have size two. We note that this gives a non-simple $\NSOP_1$ theory with a finitely approximated $2$-type that is not generically stable (and thus not frequency interpretable). We then exhibit similar behavior with a $1$-type in the theory  of the generic $K_s$-free graph for $s\geq 3$. Specifically, we consider the global type of a disconnected vertex, which is clearly not generically stable, and use lower bounds on the Ramsey numbers of Erd\H{o}s and Rogers \cite{ErRoR3t} to show this type is finitely approximated. 

At this point, it still remains open whether there is a theory with a definable and finitely satisfiable \emph{global} Keisler measure that is not finitely approximated. However, if we shift our focus to the \emph{local level} of $\phi$-types and $\phi$-measures, then interesting examples emerge. This viewpoint is also motivated by a result of the second author \cite{GannNIP} that if $\phi(x;y)$ is an NIP \emph{formula}, then any definable and finitely satisfiable Keisler measure on $\phi$-definable sets is finitely approximated.  In Section \ref{sec:hyp}, we show that this fails for $\phi$-types in simple theories. In particular, we consider the theory $T^r_s$ for some $s>r\geq 3$, and define  the $\phi$-type $p_R=\{\phi(\xbar;b):b\in\cU\}$ where $\phi(x_1,\ldots,x_{r-1};y)$ is $\neg R(\xbar,y)\wedge\bigwedge_{i\neq j}x_i\neq x_j$. Using the Ramsey property for finite $K^{r-1}_s$-free $(r-1)$-hypergraphs (due to Ne\v{s}et\v{r}il and R\"{o}dl \cite{NesRodHyp}), we show that $p_R$ is finitely satisfiable in any small model. We then show that $p_R$ is not finitely approximated by adapting an averaging argument of Erd\H{o}s and Kleitman \cite{EKcut} on maximal cuts in $(r-1)$-hypergraphs to the setting of weighted hypergraphs. 

A recurring theme in our results is that generic stability in the wild is very uncommon, and more fragile than in NIP theories. Regarding the interaction between \df\ measures and finitely approximated measures, our examples suggest a much weaker connection outside of NIP, at least at the local level. On the other hand, all of our examples of measures that are finitely approximated, but not frequency interpretable, live in theories with $\TP_2$. So perhaps there is  hope for an NIP-like connection for these notions in $\NTP_2$ or simple theories.

\subsection*{Acknowledgements} 
The authors would like to thank Abdul Basit, David Galvin, and Nick Ramsey for helpful discussions. The second author was partially supported by NSF grant DMS-1500671 (Starchenko).

\section{Preliminaries}\label{sec:pre}

Throughout the paper, we let $T$ be a complete $\cL$-theory and $\cU$  a sufficiently saturated monster model of $T$. We write $A\subset \cU$  to mean that $A$ is a subset of $\cU$ and $\cU$ is $|A|^+$-saturated. We write $M\prec\cU$ to mean $M\preceq\cU$ and $M\subset\cU$. Given $A\seq\cU$, we use ``$\cL(A)$-formula" to refer to formulas with parameters from $A$. 

Given a variable sort $x$ and $M\prec\cU$, let $\Def_x(M)$ denote the Boolean algebra of $M$-definable subsets of $\cU^x$, which we also identify with the Boolean algebra of $\cL(M)$-formulas in free variables in $x$. We let $\fM_x(M)$ denote the space of \emph{Keisler measures} (finitely additive probability measures) on $\Def_x(M)$. By viewing types as $\{0,1\}$-valued Keisler measures, we can identify $S_x(M)$ as a closed subset of $\fM_x(M)$.  Also, any Keisler measure in $\fM_x(M)$ can be naturally identified with a regular Borel probability measure on $S_{x}(M)$. See \cite[Section 7.1]{Sibook} for details.

Given a variable sort $x$ and a tuple $\abar=(a_1,\ldots,a_n)\in (\cU^x)^n$, we define the Keisler measure $\Av_{\abar}\in\fM_x(\cU)$ such that, given an $\cL(\cU)$-formula $\phi(x)$,
\[
\Av_{\abar}(\phi(x))=\textstyle\frac{1}{n}|\{i\in[n]:\cU\models\phi(a_i)\}|
\]
(where, by convention, $[n]$ denotes the set $\{1,\ldots,n\}$).

\begin{definition}\label{def:properties}
Fix $\mu\in\fM_x(\cU)$.
\begin{enumerate}
\item $\mu$ is \textbf{invariant} if there is $M\prec\cU$ such that for any $\cL$-formula $\phi(x;y)$ and any $b,b'\in\cU^y$, if $b\equiv_M b'$ then $\mu(\phi(x;b))=\mu(\phi(x;b'))$. In this case, we also say $\mu$ is \textbf{$M$-invariant}.
\item $\mu$ is \textbf{definable} if there is $M\prec\cU$ such that for any $\cL$-formula $\phi(x;y)$ and any closed set $C\seq[0,1]$, the set $\{b\in\cU^{y}:\mu(\phi(x;b))\in C\}$  is type-definable over $M$. In this case, we also say $\mu$ is \textbf{definable over $M$}.
\item $\mu$ is \textbf{finitely satisfiable in $M\prec\cU$}  if for any $\cL(\cU)$-formula $\phi(x)$, if $\mu(\phi(x))>0$ then $\cU\models\phi(a)$ for some $a\in M^x$.  
\item $\mu$ is \textbf{finitely approximated} if there is $M\prec\cU$ such that for any $\cL$-formula $\phi(x;y)$ and any $\epsilon>0$, there is $n\geq 1$ and $\abar\in (M^x)^n$ such that for any $b\in\cU^{y}$, $|\mu(\phi(x;b))-\Av_{\abar}(\phi(x;b))|<\epsilon$. In this case, we call $\abar$ a \textbf{$(\phi,\epsilon)$-approximation for $\mu$}, and we  say $\mu$ is \textbf{finitely approximated in $M$}. 
\end{enumerate}
\end{definition}

The next fact lists some implications between the notions above. These
 are standard exercises (see also \cite[Chapter 7]{Sibook} and \cite[Proposition 4.12]{GannNIP}). 

\begin{fact}\label{fact:props}
Fix $\mu\in\fM_x(\cU)$ and $M\prec\cU$. 
\begin{enumerate}[$(a)$]
\item If $\mu$ is definable over $M$ or finitely satisfiable in $M$, then $\mu$ is $M$-invariant.
\item If $\mu$ is finitely approximated in $M$, then it is definable over $M$ and finitely satisfiable in $M$.
\end{enumerate}
\end{fact}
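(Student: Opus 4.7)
For part $(a)$, the plan is to prove the two implications separately by contradiction, in each case fixing $b\equiv_M b'$ with $\mu(\phi(x;b))\neq\mu(\phi(x;b'))$ and deriving a contradiction. In the definable case this is essentially immediate: write $\mu(\phi(x;b))=r$ and observe that the $M$-type-definable set $\{y:\mu(\phi(x;y))\in\{r\}\}$ contains $b$, hence must contain $b'$ since $b'\equiv_M b$, contradicting $\mu(\phi(x;b'))\neq r$. In the finitely satisfiable case, I would instead look at the symmetric difference: if the two measures disagree then, without loss of generality, $\mu(\phi(x;b)\wedge\neg\phi(x;b'))>0$, so finite satisfiability in $M$ supplies some $a\in M^x$ with $\models\phi(a;b)\wedge\neg\phi(a;b')$, which is impossible because $\tp(b/M)=\tp(b'/M)$ and $a$ is in $M$.

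For part $(b)$, finite satisfiability in $M$ is quick: given $\phi(x)=\psi(x;b)$ with $\mu(\phi(x))>0$, choose a $(\psi,\epsilon)$-approximation $\abar\in(M^x)^n$ for some $\epsilon<\mu(\phi(x))$; then $\Av_{\abar}(\phi(x))>0$, so some coordinate $a_i\in M$ satisfies $\phi$.

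Definability over $M$ is the main obstacle and will take a little more care. For each $n\geq 1$, fix a $(\phi,1/n)$-approximation $\abar_n\in(M^x)^{k_n}$, and note that the map $b\mapsto\Av_{\abar_n}(\phi(x;b))$ takes at most $k_n+1$ values, each of whose fibers is a Boolean combination of the $M$-definable sets $\phi(a_i;\cU^y)$. Hence for every subset $S\seq[0,1]$, the set $\{b\in\cU^y:\Av_{\abar_n}(\phi(x;b))\in S\}$ is $M$-definable. Given a closed $C\seq[0,1]$, I then want to verify the identity
\[
\{b\in\cU^y:\mu(\phi(x;b))\in C\}=\bigcap_{n\geq 1}\{b\in\cU^y:\Av_{\abar_n}(\phi(x;b))\in C_n\},
\]
where $C_n=\{r\in[0,1]:d(r,C)<1/n\}$. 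The left-to-right inclusion uses the uniform bound $|\mu(\phi(x;b))-\Av_{\abar_n}(\phi(x;b))|<1/n$ from the approximation; the right-to-left inclusion uses that $C$ is closed and equals $\bigcap_n \overline{C_n}$, so if every $\Av_{\abar_n}(\phi(x;b))$ lies within $1/n$ of $C$, then so does $\mu(\phi(x;b))$ by the triangle inequality, forcing $\mu(\phi(x;b))\in C$. Since each set in the intersection is $M$-definable, the left-hand side is type-definable over $M$, completing the proof.
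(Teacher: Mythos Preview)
Your argument is correct in all parts. The paper does not actually supply a proof of this fact: it is stated as a standard exercise with references to \cite[Chapter 7]{Sibook} and \cite[Proposition 4.12]{GannNIP}, so there is no ``paper's own proof'' to compare against. Your proof is exactly the kind of routine verification the authors had in mind, and the symmetric-difference trick you use for finite satisfiability in part $(a)$ is in fact reused later in the paper (in the proof of Proposition \ref{prop:dfsopp}).
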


While Keisler measures were first considered by Keisler in \cite{Keisler}, the notions in Definition \ref{def:properties} were largely developed in later work on NIP theories (see \cite{ChStNIP}, \cite{HPP}, \cite{HP}, \cite{HPS}). In \cite{HPS}, Hrushovski, Pillay, and Simon further introduced  \emph{frequency interpretation measures} in the NIP context. In order to state this definition, we need to recall the notion of the Morley product of two Keisler measures. While in general one can define this product for any pair of measures that are ``measurable with respect to one another'', for simplicity we restrict our focus to definable measures (see Remark \ref{rem:Borel}). 

\begin{definition}
Given $M\prec\cU$, an $\cL(M)$-formula $\phi(x;y)$,  and an $M$-invariant measure $\mu\in \fM_x(\cU)$, define the map $F^\phi_\mu\colon S_y(M)\to [0,1]$ such that $F^\phi_\mu(q)=\mu(\phi(x;b))$ where $b\models q$ (this is well-defined by $M$-invariance of $p$). 
\end{definition}

Definability of measures can be restated as follows.

\begin{fact}
Given $M\prec\cU$, a global $M$-invariant Keisler measure $\mu$ is definable over $M$ if and only if $F^\phi_\mu\colon S_y(M)\to [0,1]$ is continuous for any $\cL(M)$-formula  $\phi(x;y)$.
\end{fact}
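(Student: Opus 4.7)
The plan is to view both sides through the standard correspondence between $M$-invariant, type-definable subsets of $\cU^y$ and closed subsets of $S_y(M)$. Fix an $\cL(M)$-formula $\phi(x;y)$ and a closed set $C\seq[0,1]$, and let
\[
D_{\phi,C}=\{b\in\cU^y:\mu(\phi(x;b))\in C\}.
\]
By $M$-invariance of $\mu$, the set $D_{\phi,C}$ is $M$-invariant, and by the definition of $F^\phi_\mu$ together with $M$-invariance,
\[
(F^\phi_\mu)\inv(C)=\{\tp(b/M):b\in D_{\phi,C}\}=:\pi(D_{\phi,C}).
\]
So the definability condition in Definition \ref{def:properties}(2) asserts that $D_{\phi,C}$ is type-definable over $M$ for every $\phi$ and every closed $C$, while continuity of $F^\phi_\mu$ asserts that $\pi(D_{\phi,C})$ is closed in $S_y(M)$ for every $\phi$ and every closed $C$. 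It therefore suffices to show that for an $M$-invariant set $X\seq\cU^y$, $X$ is type-definable over $M$ if and only if $\pi(X)$ is a closed subset of $S_y(M)$.

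This equivalence is routine. If $X=\Sigma(\cU)$ for a partial type $\Sigma(y)$ over $M$, then $\pi(X)=[\Sigma]$ is closed. Conversely, if $\pi(X)$ is closed, write $\pi(X)=[\Sigma]$ for some partial type $\Sigma(y)$ over $M$; then $M$-invariance of $X$ gives $X=\Sigma(\cU)$, so $X$ is type-definable over $M$. Plugging this into the preceding paragraph yields both directions at once.

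The only minor bookkeeping is the mismatch between $\cL$-formulas in Definition \ref{def:properties}(2) and $\cL(M)$-formulas in the statement. An $\cL(M)$-formula $\phi(x;y)$ is obtained from some $\cL$-formula $\phi'(x;y,z)$ by substituting parameters $m\in M^z$ for $z$; if $\{(b,c):\mu(\phi'(x;b,c))\in C\}$ is $M$-type-definable by $\Sigma(y,z)$, then specializing gives the $M$-partial type $\Sigma(y,m)$ defining $D_{\phi,C}$, and the other direction is obvious. There is no real obstacle here; the heart of the argument is simply the Stone-duality style correspondence in the second paragraph.
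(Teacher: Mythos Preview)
The paper does not give its own proof of this statement; it records it as a Fact without argument. Your proof is correct: the key point is exactly the Stone-duality correspondence you isolate, namely that for an $M$-invariant subset $X\seq\cU^y$, type-definability over $M$ is equivalent to closedness of $\{\tp(b/M):b\in X\}$ in $S_y(M)$ (using $|M|^+$-saturation of $\cU$ for the forward inclusion $[\Sigma]\seq\pi(X)$). Your handling of the $\cL$ versus $\cL(M)$ mismatch is also fine; the phrase ``the other direction is obvious'' refers to the fact that every $\cL$-formula is in particular an $\cL(M)$-formula, so continuity for all $\cL(M)$-formulas immediately gives the definability condition of Definition~\ref{def:properties}(2).
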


We now define the Morley product of  Keisler measures.

\begin{definition}\label{def:prod}
Let $\mu\in \fM_x(\cU)$ and $\nu\in\fM_y(\cU)$ be Keisler measures, and suppose $\mu$ is definable over $M\prec\cU$. We define the product $\mu\otimes\nu$ in $\fM_{xy}(\cU)$ such that, given an $\cL(\cU)$-formula $\phi(x,y)$, 
\[
(\mu\otimes\nu)(\phi(x,y))=\int_{S_y(N)} F^\phi_\mu\, d\nu|_N,
\]
where $N\prec\cU$ contains $M$ and any parameters in $\phi(x,y)$, $F^\phi_\mu$ maps $S_y(N)$ to $[0,1]$, and $\nu|_{N}$ denotes the regular Borel probability measure on $S_y(N)$ associated to the restriction of $\nu$ to $\Def_y(N)$ (we will write $\nu$ instead of $\nu|_N$ when there is no possibility for confusion). 
\end{definition}

One can check that, in the context of the definition, the product $\mu\otimes\nu$ is well-defined and, if $\nu$ is also $M$-invariant, then $\mu\otimes\nu$ is $M$-invariant (see the remarks following \cite[Proposition 7.19]{Sibook}). Also, definability of $\mu$ is more than sufficient to make this definition work (see Remark \ref{rem:Borel}). For NIP theories, it is well known that the Morley product is associative and preserves notions such as definability. While related remarks for arbitrary theories can be found in the folklore, we  take the opportunity to clarify some details regarding our setting of definable measures. 

\begin{proposition}\label{prop:defassoc}$~$
Let $\mu\in\fM_x(\cU)$, $\nu\in\fM_y(\cU)$, and $\lambda\in\fM_z(\cU)$ be Keisler measures, and suppose $\mu$ and $\nu$ are definable over $M\prec\cU$. Then $\mu\otimes\nu$ is definable over $M$, and $\mu\otimes (\nu\otimes\lambda)=(\mu\otimes\nu)\otimes\lambda$.
\end{proposition}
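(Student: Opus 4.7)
The plan is to derive both assertions from one ingredient: since $\mu$ is definable over $M$ and $S_{yw}(M)$ is a compact Stone space, the continuous function $F^{\phi}_\mu\colon S_{yw}(M)\to[0,1]$ (obtained from any formula $\phi(x,y;w)$ by regrouping the parameter tuple as $(y,w)$) admits, for each $\epsilon>0$, a uniform $\epsilon$-approximation by a simple function $\sum_{i=1}^n r_i \chi_{[\theta_i]}$, where $\theta_1(y,w),\ldots,\theta_n(y,w)$ are $\cL(M)$-formulas whose clopen classes partition $S_{yw}(M)$. This is just uniform continuity on a compact zero-dimensional space.

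For definability of $\mu\otimes\nu$ over $M$: first note that $\mu\otimes\nu$ is $M$-invariant by the remarks cited after Definition \ref{def:prod}, so $F^{\phi}_{\mu\otimes\nu}$ is a well-defined function on $S_w(M)$. Fix $c\in\cU^w$ and $N\supseteq Mc$. Unfolding Definition \ref{def:prod},
\[
(\mu\otimes\nu)(\phi(x,y;c)) \;=\; \int_{S_y(N)} g_c \, d\nu|_N, \qquad g_c(\tp(b/N))=F^{\phi}_\mu(\tp(b,c/M)).
\]
Feeding the approximation into this integral yields
\[
\left|(\mu\otimes\nu)(\phi(x,y;c))-\sum_{i=1}^n r_i\,\nu(\theta_i(y;c))\right|<\epsilon
\]
uniformly in $c$. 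Each map $c\mapsto\nu(\theta_i(y;c))$ is continuous on $S_w(M)$ because $\nu$ is definable over $M$, so $F^\phi_{\mu\otimes\nu}$ is a uniform limit of continuous functions, hence continuous. This gives definability, and in particular both $(\mu\otimes\nu)\otimes\lambda$ and $\mu\otimes(\nu\otimes\lambda)$ are defined.

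For associativity, apply both sides to an $\cL(\cU)$-formula $\phi(x,y,z)$ and choose $N\prec\cU$ containing the parameters of $\phi$ together with $M$. Unfolding both products,
\[
((\mu\otimes\nu)\otimes\lambda)(\phi)=\int_{S_z(N)}\!\int_{S_y(N)}\mu(\phi(x,b,c))\,d\nu(b)\,d\lambda(c)
\]
and
\[
(\mu\otimes(\nu\otimes\lambda))(\phi)=\int_{S_{yz}(N)}\mu(\phi(x,b,c))\,d(\nu\otimes\lambda)(b,c).
\]
The common continuous integrand $(b,c)\mapsto\mu(\phi(x,b,c))$ on $S_{yz}(N)$ is $\epsilon$-approximated by a simple function $\sum r_i \chi_{[\theta_i(y,z)]}$ as above. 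Both integrals then reduce to expressions that match termwise via the identity $(\nu\otimes\lambda)(\theta_i(y,z))=\int_{S_z(N)}\nu(\theta_i(y;c))\,d\lambda(c)$, which is precisely Definition \ref{def:prod} applied to $\nu\otimes\lambda$. Letting $\epsilon\to 0$ yields associativity. The main obstacle is bookkeeping around the passage between type spaces over $M$ and over $N$, and verifying that the approximating $\cL(M)$-formulas induce the right identities when integrated against $\nu$ or $\nu\otimes\lambda$; no new ideas are needed beyond standard compactness arguments.
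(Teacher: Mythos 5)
Your proposal is correct and follows essentially the same route as the paper: uniformly approximate the continuous function $F^\phi_\mu$ by a simple function $\sum r_i\chi_{[\theta_i]}$ on the relevant Stone space (over $M$ for definability, over $N$ for associativity), then use definability of $\nu$ to get continuity of the summands $F^{\theta_i}_\nu$ in the first part, and the identity $(\nu\otimes\lambda)(\theta_i)=\int_{S_z(N)}\nu(\theta_i(y;c))\,d\lambda(c)$ to match the two sides in the second. The only cosmetic difference is that the paper compares $k_1$ and $k_2$ as single integrals without ever writing the fully unfolded iterated integral, which sidesteps the bookkeeping issue you flag (the inner integral in your formula for $((\mu\otimes\nu)\otimes\lambda)(\phi)$ is really over $S_y(Nc)$ rather than $S_y(N)$); but you acknowledge this and it does not affect the substance of the argument.
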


\begin{proof}
We first show $\mu\otimes\nu$ is definable over $M$ (this  is stated without proof in \cite[Lemma 1.6]{HPS}).  Fix an $\cL$-formula $\phi(x,y;z)$. We need to show that the map $F_{\mu \otimes \nu}^{\phi}:S_{z}(M) \to [0,1]$ is continuous. To demonstrate this, we will show that this map is a uniform limit of continuous functions, and hence continuous. 

Fix $\epsilon>0$.  Since $\mu$ is definable,  the map $F_{\mu}^{\phi}:S_{yz}(M) \to [0,1]$ is continuous. Since $S_{yz}(M)$ is a Stone space, there are $\cL(M)$-formulas $\psi_{1}(y,z),\ldots,\psi_{n}(y,z)$, which partition $S_{yz}(M)$,  and real numbers $r_1,\ldots,r_n$ such that  for any $p \in S_{yz}(M)$, $F_{\mu}^{\phi}(p) \approx_{\epsilon} \sum_{i=1}^{n} r_i\chi_{\psi_{i}(y,z)}(p)$ (where $r\approx_\epsilon s$ denotes $|r-s|<\epsilon$, and $\chi_{\psi(y,z)}$ denotes the characteristic function of $\psi(y,z)$ on $S_{yz}(M)$). 
Fix $p\in S_z(M)$, $c\models p|_M$, and $N\prec\cU$ containing $Mc$. Let $\phi^c$ denote $\phi(x,y;c)$ and $\psi^c_i$ denote $\psi_i(y,c)$. Then
\[
F_{\mu \otimes \nu}^{\phi}(p) = \int_{S_{y}(N)} F_{\mu}^{\phi^{c}}\, d\nu \approx_{\epsilon} \int_{S_{y}(N)} \sum_{i =1}^{n} r_i \chi_{\psi^c_{i}(y)}\, d\nu \\
 = \sum_{i=1}^{n} r_i \nu(\psi^c_{i}(y)) = \sum_{i=1}^{n} r_i F_{\nu}^{\psi_{i}}(p).
\]
Since $\nu$ is definable, we have that each $F_{\nu}^{\psi_i}$ is continuous, and so $\sum_{i=1}^{n} r_i F_{\nu}^{\psi_{i}}$ is continuous. Therefore $F_{\mu \otimes \nu}^{\phi}$ is the uniform limit of continuous functions. 

Now, to verify associativity, let $\phi(x,y,z)$ be any $\cL(\cU)$-formula. We define $k_{1} = (\mu \otimes (\nu \otimes \lambda))(\phi(x,y,z))$ and $k_{2} = ((\mu \otimes \nu) \otimes \lambda) (\phi(x,y,z))$, and show $k_1=k_2$.  Let $N\prec\cU$  contain $M$ and any parameters in $\phi(x,y,z)$. Fix $\epsilon>0$, and let $\psi_1(y,z),\ldots,\psi_n(y,z)$ and $r_1,\ldots,r_n$ approximate $F^\phi_\mu\colon S_{yz}(N)\to[0,1]$ as above.  Then
\begin{multline*}
k_1 = \int_{S_{yz}(N)} F_{\mu}^{\phi}\, d(\nu \otimes \lambda) \approx_{\epsilon} \int_{S_{yz}(N)} \sum_{i=1}^n r_i \chi_{\psi_{i}(y,z)}\, d(\nu \otimes \lambda)\\
 = \sum_{i=1}^n r_i (\nu \otimes \lambda)(\psi_i(y,z)) =\int_{S_{z}(N)} \sum_{i=1}^{n} r_i  F_{\nu}^{\psi_i}\,d\lambda.
\end{multline*}
Recall that $k_2 = \int_{S_{z}(N)} F_{\mu \otimes \nu}^{\phi}\, d\lambda$. As above, we have $F_{\mu \otimes \nu}^{\phi}(p) \approx_{\epsilon} \sum_{i=1}^{n} r_i F_{\nu}^{\psi_{i}}(p)$ for any $p \in S_{z}(N)$. Therefore
\[
|k_2 - k_1|  < \int_{S_{z}(N)}\left| F_{\mu \otimes \nu}^{\phi} - {\textstyle \sum_{i=1}^{n} r_i  F_{\nu}^{\psi_i}}\right| d\lambda +\epsilon  <  2\epsilon.\qedhere
\]
\end{proof}

Given $n\geq 1$ and a definable measure $\mu\in\fM_x(\cU)$, let $\mu^{(n)}$ denote the iterated product $\mu\,\otimes\stackrel{n}{\ldots}\otimes\,\mu\in\fM_{\xbar}(\cU)$, where $\xbar=(x_1,\ldots,x_n)$ and $x_i$ is of sort $x$. So $\mu^{(n)}$ is well-defined and definable over $M$ by Proposition \ref{prop:defassoc}. 

\begin{definition}\label{def:fim}
A Keisler measure $\mu\in\fM_x(\cU)$ is a  \textbf{frequency interpretation measure} if for any $\cL$-formula $\phi(x;y)$, there is a sequence $(\theta_n(x_1,\ldots,x_n))_{n=1}^\infty$ of $\cL(\cU)$-formulas satisfying the following properties:
\begin{enumerate}[$(i)$]
\item For any $\epsilon>0$, there is some $n_{\epsilon,\phi}\geq 1$ such that if $n\geq n_{\epsilon,\phi}$, $\abar\models\theta_n(\xbar)$, and $b\in\cU^{y}$, then $|\mu(\phi(x;b))-\Av_{\abar}(\phi(x;b))|<\epsilon$. 
\item $\lim_{n\to\infty}\mu^{(n)}(\theta_n(x_1,\ldots,x_n))=1$.
\end{enumerate}
\end{definition}

In the previous definition, condition $(i)$ implies that $\mu$ is finitely approximated in any $M\prec\cU$  such that the formulas $\theta_n(\xbar)$ in condition $(i)$ are over $M$. In particular, any frequency interpretation measure is definable, and so the iterated product $\mu^{(n)}$ in condition $(ii)$ is well-defined. In NIP theories, the following equivalences hold.

\begin{theorem}[Hrushovski, Pillay, Simon \cite{HPS}]\label{thm:HPS}
Assume $T$ is NIP. Given a Keisler measure $\mu\in\fM_x(\cU)$, the following are equivalent.
\begin{enumerate}[$(i)$]
\item $\mu$ is definable and finitely satisfiable in a small model.
\item $\mu$ is finitely approximated.
\item $\mu$ is a frequency interpretation measure.
\end{enumerate}
\end{theorem}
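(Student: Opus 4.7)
The plan is to establish the cycle $(ii) \Rightarrow (i) \Rightarrow (iii) \Rightarrow (ii)$. The implication $(ii) \Rightarrow (i)$ is immediate from Fact \ref{fact:props}$(b)$. For $(iii) \Rightarrow (ii)$, fix $\phi(x;y)$ and $\epsilon > 0$; by condition $(i)$ of Definition \ref{def:fim}, any $\abar$ realizing $\theta_n$ (for $n \geq n_{\epsilon,\phi}$) automatically gives a $(\phi,\epsilon)$-approximation of $\mu$, and such $\abar$ can be found in any sufficiently rich small model $M$ carrying the parameters of $\theta_n$ because condition $(ii)$ of Definition \ref{def:fim} forces $\mu^{(n)}(\theta_n) > 0$ for large $n$, hence $\theta_n$ is consistent.

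The main content is $(i) \Rightarrow (iii)$. Fix $\phi(x;y)$ and $\epsilon > 0$. Since $T$ is NIP, $\phi(x;y)$ is an NIP formula, so the family of sets $\{\phi(x;b) : b \in \cU^y\}$, viewed as Borel subsets of $S_x(M)$, is a VC class of finite VC-dimension. Since the restriction of $\mu$ to $M$ corresponds to a regular Borel probability measure on $S_x(M)$, the classical Vapnik--Chervonenkis uniform law of large numbers produces $n = n(\phi,\epsilon)$ such that with $\mu^{(n)}$-probability greater than $1 - \epsilon$, a sample $\abar \in (\cU^x)^n$ satisfies
\[
\sup_{b \in \cU^y} |\mu(\phi(x;b)) - \Av_{\abar}(\phi(x;b))| < \epsilon/2.
\]

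The main obstacle is that the ``good'' set of tuples $\abar$ above is only type-definable over $M$, whereas Definition \ref{def:fim} demands an honest $\cL(\cU)$-formula $\theta_n(\xbar)$. To bridge this gap I would exploit definability of $\mu$: using continuity of $F^\phi_\mu \colon S_y(M) \to [0,1]$, partition $\cU^y$ into finitely many $M$-definable pieces $B_1, \ldots, B_k$ on each of which $b \mapsto \mu(\phi(x;b))$ is constant up to error $\epsilon/4$, with representative values $r_1, \ldots, r_k \in [0,1]$. Then set
\[
\theta_n(\xbar) := \bigwedge_{(j,s)} \neg \exists y \Bigl( B_j(y) \wedge \bigwedge_{i=1}^{n} \phi(x_i;y)^{s_i} \Bigr),
\]
where the outer conjunction ranges over pairs $(j,s) \in [k] \times \{0,1\}^n$ with $\bigl| |s|/n - r_j \bigr| \geq \epsilon/2$, and $\phi^1, \phi^0$ abbreviate $\phi, \neg\phi$. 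A triangle-inequality check confirms that any $\abar \models \theta_n$ satisfies the desired uniform $\epsilon$-approximation, while the bound $\mu^{(n)}(\theta_n) > 1 - \epsilon$ is inherited from the VC conclusion since $\theta_n$ contains the VC-good set modulo the $\epsilon/4$ discretization. Iterating along a sequence $\epsilon_k \downarrow 0$ builds the sequence $(\theta_n)_n$ required by Definition \ref{def:fim}. The delicate point throughout is to choose the partition $B_1, \ldots, B_k$ fine enough that the discretization error does not erode the VC probability bound.
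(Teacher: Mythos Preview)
The paper does not supply its own proof of Theorem~\ref{thm:HPS}; the result is quoted from \cite{HPS} as background. So there is no in-paper argument to compare against, and I assess your proposal on its own terms.

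Your cycle $(ii)\Rightarrow(i)\Rightarrow(iii)\Rightarrow(ii)$ is the right shape, and the two easy implications are handled correctly. The strategy for $(i)\Rightarrow(iii)$---invoke the VC law of large numbers for the NIP family $\{\phi(x;b):b\in\cU^y\}$, then use definability of $\mu$ to discretize the parameter space and convert the type-definable ``good'' set into an honest formula $\theta_n$---is indeed the engine behind the argument in \cite{HPS}, and your construction of $\theta_n$ is reasonable.

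There is, however, a genuine gap. The Vapnik--Chervonenkis inequality is a statement about the \emph{i.i.d.\ product} $\mu\times\cdots\times\mu$ on $(S_x(\cU))^n$: it says a tuple drawn independently from $\mu$ is, with high product-probability, a uniform approximation. Definition~\ref{def:fim} asks instead for $\mu^{(n)}(\theta_n)\to 1$, where $\mu^{(n)}$ is the \emph{Morley} product on $S_{x_1\cdots x_n}(\cU)$. You slide from one to the other when you write ``with $\mu^{(n)}$-probability greater than $1-\epsilon$'' and again when you say the bound on $\mu^{(n)}(\theta_n)$ ``is inherited from the VC conclusion.'' The two measures agree on finite Boolean combinations of rectangles $\bigwedge_i\psi_i(x_i)$ by an easy Fubini computation, but your $\theta_n$ is not of that form: the quantifier $\exists y$ couples all of the $x_i$ to a common parameter, so $[\theta_n]$ corresponds to an uncountable union of rectangles and one cannot simply read off its $\mu^{(n)}$-measure from the i.i.d.\ picture. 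Establishing that the Morley product of a \df\ measure in an NIP theory really does satisfy the required VC-type concentration is one of the substantive steps in \cite{HPS}; it uses both the NIP hypothesis and finite satisfiability (not merely finite VC-dimension of $\phi$), via symmetry properties of $\otimes$ for such measures. Without that bridge, condition $(ii)$ of Definition~\ref{def:fim} remains unverified.
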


A related result for types was first proved by Hrushovski and Pillay in \cite{HP} (see Section \ref{sec:gs}). The terminology ``finitely approximated" does not appear in \cite{HPS}, but rather comes from Chernikov and Starchenko \cite{ChStNIP}. 

Finally, we record a few more useful facts about Keisler measures. Given an $\cL$-formula $\phi(x;y)$ we let $\phi^*(y;x)$ be the same formula but with the roles of object and parameters variables exchanged.

\begin{proposition}\label{prop:dfsopp}
Let $\phi(x,y)$ be an $\cL$-formula, and suppose $\mu\in\fM_x(\cU)$ is definable and finitely satisfiable in $M\prec\cU$.
\begin{enumerate}[$(a)$]
\item For any closed set $C\seq [0,1]$, the set $\{b\in\cU^y:\mu(\phi(x;b))\in C\}$ is $\phi^*$-type-definable over $M$.
\item Suppose $b\in\cU^y$ and $\mu(\phi(x;b))>0$. Then there is a $\phi^*$-formula $\psi(y)$, with parameters from $M$, such that $\cU\models\psi(b)$ and $\mu(\phi(x;c))>0$ for any $c\in\psi(\cU)$.
\end{enumerate}
\end{proposition}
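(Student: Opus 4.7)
The plan is to first establish a key observation that combines definability with finite satisfiability, and then deduce both parts from it. The observation is that $\mu(\phi(x;b))$ depends only on the $\phi^*$-type of $b$ over $M$. To prove this, I would suppose $b, b' \in \cU^y$ have the same $\phi^*$-type over $M$ but $\mu(\phi(x;b)) > \mu(\phi(x;b'))$ (the other case being symmetric). Then
\[
\mu(\phi(x;b) \wedge \neg\phi(x;b')) \geq \mu(\phi(x;b)) - \mu(\phi(x;b')) > 0,
\]
so by finite satisfiability in $M$ there is $a \in M^x$ with $\cU \models \phi(a;b) \wedge \neg\phi(a;b')$. Then $\phi^*(b;a)$ and $\neg\phi^*(b';a)$ both hold with $a \in M$, contradicting $b \equiv^{\phi^*}_M b'$.

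Given this observation, for part $(a)$, the set $(F^\phi_\mu)^{-1}(C) \seq S_y(M)$ is closed (by continuity of $F^\phi_\mu$, which comes from definability of $\mu$) and invariant under the equivalence relation ``same $\phi^*$-type over $M$.'' Hence it is the preimage, under the canonical projection $S_y(M) \to S^{\phi^*}_y(M)$, of a closed set $X \seq S^{\phi^*}_y(M)$. Such an $X$ is cut out by a partial $\phi^*$-type $\pi(y)$ over $M$, and then $\{b \in \cU^y : \mu(\phi(x;b)) \in C\}$ is exactly the set of realizations of $\pi$, which is the desired $\phi^*$-type-definability over $M$.

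For part $(b)$, apply $(a)$ with $C = \{0\}$ to obtain a partial $\phi^*$-type $\pi(y)$ over $M$ such that $\mu(\phi(x;b'')) = 0$ if and only if $b''$ realizes $\pi$. Since $\mu(\phi(x;b)) > 0$, $b$ does not realize $\pi$, so by compactness there is a finite conjunction $\pi_0(y)$ of formulas from $\pi$ with $\cU \not\models \pi_0(b)$. Setting $\psi(y) := \neg \pi_0(y)$ gives a $\phi^*$-formula over $M$ satisfied by $b$, and any $c \in \psi(\cU)$ fails to realize $\pi$, so $\mu(\phi(x;c)) > 0$.

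The main nontrivial step is the key observation in the first paragraph, specifically the idea to test $\phi(x;b) \wedge \neg\phi(x;b')$ against finite satisfiability in order to link a difference in $\mu$-values directly to the $\phi^*$-type of the parameter tuple. Everything else is routine Stone-space reasoning and a compactness extraction.
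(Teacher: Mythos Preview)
Your proof is correct and follows essentially the same approach as the paper: the key observation that $\mu(\phi(x;b))$ depends only on $\tp_{\phi^*}(b/M)$, proved via finite satisfiability of $\phi(x;b)\wedge\neg\phi(x;b')$, is exactly the paper's argument, and the remaining steps are the same Stone-space reasoning (the paper phrases it via the universal property of the quotient map $S_y(M)\to S_{\phi^*}(M)$ to get a continuous $F$, then for $(b)$ takes a basic clopen inside the open set $F^{-1}((\delta,1])$, whereas you dualize and extract a formula from the partial type defining $F^{-1}(\{0\})$). One cosmetic remark: your ``by compactness'' in part $(b)$ is not really compactness---if $b$ fails to realize $\pi$, then by definition $b$ fails some single formula of $\pi$.
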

\begin{proof}
Let $r_\phi\colon S_y(M)\to S_{\phi^*}(M)$ be the obvious restriction map. Recall that any continuous surjection between compact Hausdorff spaces is a quotient map, and so $r_{\phi}$ is a quotient map. We claim that $F:=F^\phi_{\mu}\circ r\inv_\phi$ is a well-defined function from $S_{\phi^*}(M)$ to $[0,1]$. In other words, we fix $c,c'\in \cU^y$ such that $\tp_{\phi^*}(c/M)=\tp_{\phi^*}(c'/M)$ and show that $\mu(\phi(x;c))=\mu(\phi(x;c'))$. Toward a contradiction, suppose $\mu(\phi(x;c))>\mu(\phi(x;c'))$. Then $\mu(\phi(x;c)\wedge\neg\phi(x;c'))>0$, and thus $\phi(x;c)\wedge\neg\phi(x;c')$ is realized in $M$, which contradicts $\tp_{\phi^*}(c/M)=\tp_{\phi^*}(c'/M)$. 

Since $\mu$ is definable over $M$, we have that $F^\phi_\mu$ is continuous. Now, by the universal property of quotient maps, $F$ is continuous. This immediately implies part $(a)$. For part $(b)$, fix $b\in\cU^y$ such that $\mu(\phi(x,b))>0$. Then $F(\tp_{\phi^*}(b/M))>0$. Fix $0<\delta<F(\tp_{\phi^*}(b/M))$ and consider $U=F\inv((\delta,1])$. Then $U$ is an open set in $S_{\phi^*}(M)$ containing $\tp_{\phi^*}(b/M)$, and so there is a $\phi^*$-formula $\psi(y)$ over $M$ such that $\tp_{\phi^*}(b/M)\in \{p\in S_{\phi^*}(\cU):\psi(y)\in p\}\seq U$. Now $\psi(y)$ is as desired.  
\end{proof}

\begin{proposition}\label{prop:famprod}
Suppose $\mu\in\fM_x(\cU)$, $\nu\in\fM_y(\cU)$, and $M\prec\cU$.
\begin{enumerate}[$(a)$]
\item If $\mu$ and $\nu$ are definable and finitely satisfiable in $M$, then so is $\mu\otimes\nu$. 
\item If $\mu$ and $\nu$ are finitely approximated in $M$, then so is $\mu\otimes\nu$.
\end{enumerate}
\end{proposition}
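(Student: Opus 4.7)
The plan is to treat the two parts independently. Part (a) reduces to proving finite satisfiability, since definability of $\mu\otimes\nu$ over $M$ is already handled by Proposition \ref{prop:defassoc}. Part (b) is a straightforward double application of finite approximation, once to each factor, with the averaging tuple for the product being the product tuple.

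For part (a), I would suppose $(\mu\otimes\nu)(\phi(x,y))>0$ for some $\cL(\cU)$-formula $\phi(x,y)$ and fix $N\prec\cU$ containing $M$ and the parameters of $\phi$. Then $\int_{S_y(N)}F^\phi_\mu\,d\nu>0$, so by continuity of $F^\phi_\mu$ (Fact from Proposition \ref{prop:defassoc}), the open set $U=\{q\in S_y(N):F^\phi_\mu(q)>0\}$ has positive $\nu$-measure. Since $\nu$ corresponds to a regular Borel probability measure on the Stone space $S_y(N)$, inner regularity gives a compact $K\seq U$ with $\nu(K)>0$, and then compactness of $K$ together with the fact that every open set in a Stone space is a union of clopens yields finitely many clopens $[\psi_1],\dots,[\psi_k]\seq U$ whose union $[\psi]=[\psi_1\vee\cdots\vee\psi_k]$ is clopen with $\nu(\psi)\geq\nu(K)>0$. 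Finite satisfiability of $\nu$ in $M$ (applied to the $\cL(\cU)$-formula $\psi(y)$) then produces $b\in M^y$ with $\cU\models\psi(b)$; since $\tp(b/N)\in U$, we get $\mu(\phi(x;b))>0$; and a final application of finite satisfiability of $\mu$ in $M$ produces $a\in M^x$ with $\cU\models\phi(a,b)$.

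For part (b), I would fix an $\cL$-formula $\phi(x,y;z)$ and $\epsilon>0$. Using finite approximation of $\mu$ applied to $\phi$ read with free variable $x$ and parameters $(y,z)$, I choose $\abar=(a_1,\dots,a_n)\in(M^x)^n$ so that $|\mu(\phi(x;b,c))-\Av_\abar(\phi(x;b,c))|<\epsilon/2$ uniformly in $(b,c)\in\cU^{y+z}$. Then, using finite approximation of $\nu$ applied to $\phi$ read with free variable $y$ and parameters $(x,z)$, I choose $\bbar=(b_1,\dots,b_m)\in(M^y)^m$ so that $|\nu(\phi(a;y,c))-\Av_\bbar(\phi(a;y,c))|<\epsilon/2$ uniformly in $(a,c)\in\cU^{x+z}$. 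For any $c\in\cU^z$, integrating the first bound against $\nu$ (noting $\Av_\abar(\phi(x;b,c))$ depends on $b$ only through $\tp(b/N)$ since $\abar\in M$) and then applying the second bound coordinatewise yields
\[
(\mu\otimes\nu)(\phi(x,y;c))\;\approx_{\epsilon/2}\;\frac{1}{n}\sum_{i=1}^n\nu(\phi(a_i;y,c))\;\approx_{\epsilon/2}\;\Av_{\abar\bbar}(\phi(x,y;c)),
\]
where $\abar\bbar$ is the $nm$-tuple of pairs $(a_i,b_j)$; the final equality is just an unfolding of averages. The triangle inequality then exhibits $\abar\bbar$ as a $(\phi,\epsilon)$-approximation for $\mu\otimes\nu$.

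The most delicate step is the clopen refinement in part (a): the formula $\psi(y)$ naturally lives in $\cL(N)$ rather than $\cL(M)$, but since $\nu$ is a \emph{global} measure finitely satisfiable in $M$, it sees all $\cL(\cU)$-formulas, so this causes no trouble. Part (b) requires nothing beyond the uniformity of the approximations in their respective parameter variables, which is built into the definition of finitely approximated.
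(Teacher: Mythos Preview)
Your proposal is correct and follows essentially the same approach as the paper. For part~$(b)$ your argument is exactly the ``straightforward calculation'' the paper alludes to (choosing $(\theta_1,\epsilon/2)$- and $(\theta_2,\epsilon/2)$-approximations and taking the product tuple); for part~$(a)$ the paper simply invokes Proposition~\ref{prop:defassoc} for definability and leaves finite satisfiability as an exercise, which you have filled in correctly via the clopen-refinement argument.
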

\begin{proof}
Part $(a)$. By Proposition \ref{prop:defassoc}, $\mu\otimes\nu$ is definable over $M$. We leave finite satisfiability as an exercise. See also \cite[Lemma 2.1]{HPS}, where it is shown that if $\mu$ and $\nu$ are finitely satisfiable in $M$, and $\mu$ is ``Borel definable" (see Remark \ref{rem:Borel}), then $\mu\otimes\nu$ is finitely satisfiable in $M$.

Part $(b)$. Fix $\epsilon>0$, and let $\phi(x,y;z)$ be an $\cL$-formula. Let $\theta_1(x;y,z) = \phi(x,y,z)$ and $\theta_{2}(y;x,z) = \phi(x,y,z)$. Then a straightforward calculation shows that if $\bar{a} \in (M^x)^m$ is a $(\theta_1,\frac{\epsilon}{2})$-approximation for $\mu$ and $\bar{b} \in (M^y)^n$ is a $(\theta_2,\frac{\epsilon}{2})$-approximation for $\nu$, then $((a_i,b_j))_{i\in[m],j\in[n]}\in (M^{xy})^{mn}$ is a $(\phi,\epsilon)$-approximation for $\mu\otimes\nu$.
\end{proof}

The analogue of the previous fact fails for frequency interpretation measures, even in the case of types. This follows from Proposition \ref{prop:PTgs} applied to an
example of Adler, Casanovas, and Pillay \cite{ACPgs} (see Fact \ref{fact:Tfeq2}). 

\begin{remark}\label{rem:Borel}
In Definition \ref{def:prod}, one clearly only needs $F^\phi_\mu$ to be $\nu|_N$-measurable in order for the integral to make sense. If $T$ is NIP, then any $M$-invariant measure is \emph{Borel definable} over $M$, which is to say that for any $\cL$-formula $\phi(x,y)$,  the map $F^\phi_\mu\colon S_y(M)\to [0,1]$ is Borel. So for NIP theories, one only needs invariance of $\mu$ in Definition \ref{def:prod}. In order to give a uniform definition of Morely products in general, one often works with Borel definable measures. So a natural question is whether the analogue of Proposition \ref{prop:defassoc} holds in this general setting (the analogue of part $(a)$ is stated, but not proved, in \cite[Lemma 2.1]{HPS}).

On the other hand, the Morley product of invariant \emph{types} is always well-defined (since every map is measurable with respect to a Dirac measure) and associative. See \cite[Section 2.2]{Sibook} for details.
\end{remark}

\section{Generically stable types}\label{sec:gs}

In NIP theories, a Keisler measure $\mu\in\fM_x(\cU)$ is called \emph{generically stable} if it satisfies the equivalent properties in Theorem \ref{thm:HPS}. Generically stable \emph{types} in NIP theories were initially studied by Shelah \cite{ShGS}, and then in more depth by   Hrushovski and Pillay  \cite{HP} and Usyvatsov \cite{UsvyGS}. In \cite{PiTa}, Pillay and Tanovi\'{c} give a definition of generic stability for types in arbitrary theories (Definition \ref{def:PTgs} below). This notion is further studied by Adler, Casanovas, and Pillay in \cite{ACPgs}. An equivalent formulation of generic stability for types in arbitrary theories is given by Garc\'{i}a, Onshuus, and Usvyatsov in \cite{GOU}.

Given an infinite ordinal $\alpha$ and a sequence $(a_i)_{i<\alpha}$ in $\cU^x$, we let $\Av(a_i)_{i<\alpha}$ denote the \emph{average type} of $(a_i)_{i<\alpha}$ over $\cU$, i.e., the partial type of $\cL(\cU)$-formulas $\phi(x)$ such that $\{i<\alpha:\cU\models\neg\phi(a_i)\}$ is finite.

\begin{definition}[\cite{PiTa}]\label{def:PTgs}
A type $p\in S_x(\cU)$ is \textbf{generically stable} if there is $M\prec\cU$ such that $p$ is $M$-invariant and $\Av(a_i)_{i<\alpha}$ is a complete type for any Morley sequence $(a_i)_{i<\alpha}$ in $p$ over $M$ and any infinite ordinal $\alpha$. In this case, we also say $p$ is \textbf{generically stable over $M$}. 
\end{definition}

We make two remarks. First, the reference to ordinals $\alpha$ other than $\omega$ is necessary in Definition \ref{def:PTgs}. For example,  if $T$ is NIP then any invariant global type satisfies the conclusion of the definition when $\alpha=\omega$, but if $T$ unstable then there is some invariant global type that is not definable (or finitely satisfiable in any small model), and hence not generically stable (see, e.g., \cite[Theorem 2.15]{Pibook}). Second, since Definition \ref{def:PTgs} involves Morley sequences, it does not immediately transfer to measures.\footnote{Randomizations might be a possible future avenue to explore.} The next result clarifies both of these remarks.

\begin{proposition}\label{prop:PTgs}
Given $p\in S_x(\cU)$ and $M\prec\cU$, the following are equivalent.
\begin{enumerate}[$(i)$]
\item $p$ is generically stable over $M$.
\item $p$ is $M$-invariant and $p=\Av(a_i)_{i<\omega}$ for any Morley sequence $(a_i)_{i<\omega}$ in $p$ over $M$.
\item $p$ is a frequency interpretation measure over $M$.
\end{enumerate}
\end{proposition}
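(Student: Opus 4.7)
The plan is to establish $(i)\Leftrightarrow(ii)$ and $(ii)\Leftrightarrow(iii)$. For $(ii)\Rightarrow(i)$, any subsequence (in the sense of preserving order) of a Morley sequence in $p$ over $M$ is again Morley, so if $(a_i)_{i<\alpha}$ is Morley and $\phi(x;b)\in p$, any infinite $\{i<\alpha:\neg\phi(a_i;b)\}$ yields an $\omega$-subsequence violating $(ii)$; hence $\Av(a_i)_{i<\alpha}=p$, which is complete. For $(i)\Rightarrow(ii)$, fix a Morley sequence $(a_i)_{i<\omega}$ and suppose toward contradiction that $\phi(x;b)\in p$ while $\neg\phi(a_i;b)$ holds for cofinitely many $i<\omega$. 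Recursively extend by $a_{\omega+j}\models p|_{Mba_{<\omega+j}}$, so that $\phi(a_{\omega+j};b)$ holds and $(a_i)_{i<\omega+\omega}$ remains a Morley sequence in $p$ over $M$. Then neither $\phi(x;b)$ nor $\neg\phi(x;b)$ lies in the average type of the extended sequence, contradicting the completeness asserted by $(i)$. The direction $(iii)\Rightarrow(ii)$ is immediate: $(iii)$ implies finite approximation (by the remark following Definition~\ref{def:fim}), and each $n$-prefix of a Morley sequence realizes $p^{(n)}$ and so eventually satisfies $\theta_n$, yielding $|\Av_{a_{<n}}(\phi(x;b))-p(\phi(x;b))|<\epsilon$ for every $b$ and every $\epsilon$.

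The substantive direction is $(ii)\Rightarrow(iii)$. The first step is a compactness argument producing, for each $\cL$-formula $\phi(x;y)$, a uniform bound $n_\phi$ such that for any Morley sequence $(a_i)_{i<\omega}$ in $p$ over $M$ and any $b\in\cU^y$,
\[
\min\bigl(|\{i<\omega:\phi(a_i;b)\}|,\,|\{i<\omega:\neg\phi(a_i;b)\}|\bigr)\leq n_\phi.
\]
Indeed, failure of this statement would, by saturation of $\cU$, yield a single $b^*\in\cU^y$ with $\phi(a_i;b^*)$ and $\neg\phi(a_i;b^*)$ each holding infinitely often, so that neither lies in $\Av(a_i)_{i<\omega}$, contradicting $(ii)$; the bound transfers to every Morley sequence by $M$-indiscernibility. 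Fixing any Morley sequence $(b_i)_{i<N}$ in $p$ over $M$ with $N=2n_\phi+1$, and given $\epsilon>0$ and $n\geq n_\phi/\epsilon$, define the $\cL(\cU)$-formula
\begin{multline*}
\theta_n(\xbar) := \forall y\,\Bigl[\bigl(|\{i<N:\phi(b_i;y)\}|>n_\phi\bigr)\to\bigl(|\{i<n:\phi(x_i;y)\}|>(1-\epsilon)n\bigr)\Bigr] \\
\wedge\;\forall y\,\Bigl[\bigl(|\{i<N:\phi(b_i;y)\}|\leq n_\phi\bigr)\to\bigl(|\{i<n:\phi(x_i;y)\}|<\epsilon n\bigr)\Bigr].
\end{multline*}
Since realizations of $p^{(n)}$ are (reversed) Morley sequences in $p$ over $M$, the uniform bound forces both sides of each implication to track the verdict of $p$ on $\phi(x;y)$; hence $\theta_n\in p^{(n)}$ and, as $p^{(n)}$ is a type, $p^{(n)}(\theta_n)=1$, giving condition $(ii)$ of Definition~\ref{def:fim}. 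Conversely, for any $\abar\models\theta_n$ and any $b\in\cU^y$, the parameters $(b_i)_{i<N}$ correctly decide whether $\phi(x;b)\in p$ via the uniform bound, and the implications pin $\Av_{\abar}(\phi(x;b))$ to within $\epsilon$ of that verdict; hence $\abar$ is a $(\phi,\epsilon)$-approximation.

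The main obstacle is designing $\theta_n$. A formula counting only $\phi$-agreements within $\xbar$ itself cannot suffice: a constant tuple $(c,\ldots,c)$ with $c\models p|_M$ satisfies any naive uniform-bound condition yet produces averages that need not match $p(\phi(x;b))$ when $b\notin M$. Allowing $\theta_n$ to use parameters from $\cU$---namely the auxiliary Morley sequence $(b_i)$---is what enables the formula to internally encode $p$'s verdict on $\phi(x;y)$ and bridge the gap between ``being approximately a Morley sequence'' and ``approximating $p$.'' This reliance on external Morley parameters is also the reason one should not expect analogous definability of $p$ over $M$ to drop out for free, even though such definability can be recovered as a byproduct in specific situations.
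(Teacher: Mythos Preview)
Your argument for $(iii)\Rightarrow(ii)$ has a genuine gap. From the fact that each $n$-prefix (reversed) realizes $p^{(n)}|_M$ and hence satisfies $\theta_n$, you correctly obtain $\Av_{a_{<n}}(\phi(x;b))\to p(\phi(x;b))$ for each $b$. But convergence of these running averages to $1$ does \emph{not} imply that $\{i<\omega:\neg\phi(a_i;b)\}$ is finite---consider a sequence where $\neg\phi(a_i;b)$ holds precisely when $i$ is a power of $2$. So you have not shown $p=\Av(a_i)_{i<\omega}$. The paper's fix is to use arbitrary increasing $n$-subsequences rather than prefixes: any such subsequence is again Morley over $M$ and hence (reversed) satisfies $\theta_n$, so if $\{i:\neg\phi(a_i;b)\}$ were infinite one could select $n$ indices all satisfying $\neg\phi$, giving average $0$ and contradicting the approximation bound (the paper simply takes $\epsilon=1$ here). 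You already invoke this subsequence trick in $(ii)\Rightarrow(i)$, so the repair is straightforward.

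For $(ii)\Rightarrow(iii)$ your approach is genuinely different from the paper's and essentially correct, but as written it yields a weaker conclusion. The paper first observes that the uniform bound $n_\phi$ makes $p$ definable over any Morley sequence, and then---crucially---uses $M$-invariance to replace this by an $\cL(M)$-definition $\psi(y)$ of $\{b:\phi(x;b)\in p\}$; the formulas $\theta_n$ are built from $\psi$ and hence lie over $M$. Your voting clause $|\{i<N:\phi(b_i;y)\}|>n_\phi$ defines exactly the same set, so it \emph{is} a $\phi^*$-definition of $p$, just with parameters $\bar b$ rather than in $M$. Thus your closing remark is mistaken: definability of $p$ over $M$ does drop out, precisely because your voting formula can be traded for an equivalent $\cL(M)$-formula via $M$-invariance. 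As written, your $\theta_n$ lie over $M\bar b$ rather than $M$, so you have established ``fim'' but not ``fim over $M$'' as the proposition states; substituting $\psi(y)$ for the voting clause closes this gap and recovers the paper's conclusion. (A minor related point: your $\theta_n$ also depends on $\epsilon$; to match Definition~\ref{def:fim}, take $\epsilon=n_\phi/n$ so that a single sequence $(\theta_n)_n$ serves for all $\epsilon$.)
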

\begin{proof}
$(ii)\Rightarrow (i)$. Assume $(ii)$. To show $(i)$, it suffices to consider Morley sequences indexed by $\omega+\omega$ (we leave this as an exercise for the reader). So fix an $\cL(\cU)$-formula $\phi(x)$ and a Morley sequence $(a_i)_{i<\omega+\omega}$ in $p$ over $M$. If $\phi(x)\in p$ then, by $(ii)$, $\{i<\omega:\cU\models\neg\phi(a_i)\}$ and $\{\omega\leq i<\omega+\omega:\cU\models\neg\phi(a_i)\}$ are finite, and so $\{i<\omega+\omega:\cU\models\neg\phi(a_i)\}$ is finite. If $\phi(x)\not\in p$ then $\neg\phi(x)\in p$ and so, by the same reasoning, $\{i<\omega+\omega:\cU\models\phi(a_i)\}$ is finite.

$(i)\Rightarrow (iii)$. Assume $(i)$, and fix an $\cL$-formula $\phi(x;y)$. We construct a sequence $(\theta_n)_{n\geq 1}$ as in Definition \ref{def:fim}. By Definition \ref{def:PTgs} and compactness, there is some $n_\phi$ such that for any Morley sequence $(a_i)_{i<\omega}$ in $p$ over $M$, and any $b\in\cU^{y}$, either $\phi(x;b)\in p$ and $|\{i<\omega:\neg\phi(a_i;b)\}|\leq n_\phi$,  or $\neg\phi(x;b)\in p$ and $|\{i<\omega:\phi(a_i;b)\}|\leq n_\phi$ (see \cite[Proposition 3.2]{HP} or  \cite[Proposition 1]{PiTa} for details). Note that this implies that $p$ is definable over $(a_i)_{i<\omega}$, and thus definable over $M$ by $M$-invariance (see \cite[Lemma 2.18]{Sibook}). So we may choose an $\cL(M)$-formula $\psi(y)$ such that, for all $b\in\cU^{y}$, $\phi(x;b)\in p$ if and only if $\cU\models \psi(b)$.

Given $i\geq 1$, let $n_i=n_\phi i$. We will define a sequence $(\theta_{n_i}(x_1,\ldots,x_{n_i}))_{i=1}^\infty$ of $\cL(M)$-formulas such that, for all $i\geq 1$, $\theta_{n_i}(x_1,\ldots,x_{n_i})\in p^{(n_i)}$ and
\begin{equation*}
\text{if $\abar\models\theta_{n_i}(x_1,\ldots,x_{n_i})$ and $b\in\cU^{y}$ then $|p(\phi(x;b))-\Av_{\abar}(\phi(x;b))|\leq \textstyle\frac{1}{i}$}\tag{$\dagger$}
\end{equation*}
(recall that we view $p$ as a $\{0,1\}$-valued measure in $\fM_x(\cU)$). First, we note that this suffices to prove $(iii)$. Indeed, given $(\theta_{n_i})_{i=1}^\infty$ as above and $n\geq n_\phi$, let $\theta_n(x_1,\ldots,x_n)$ be $\theta_{n_i}(x_1,\ldots,x_{n_i})\wedge\bigwedge_{j\leq n} x_j=x_j$ where $n_i\leq n<n_{i+1}$. Note that $\theta_n(x_1,\ldots,x_n)\in p^{(n)}$ for all $n$. Also, if $n\geq n_\phi$, $\abar\models\theta_n(x_1,\ldots,x_n)$, and $b\in\mathcal{U}$ then, using the triangle inequality and $(\dagger)$, one can show $|p(\phi(x;b))-\Av_{\abar}(\phi(x;b))|< \frac{3}{i}$ where $i$ is such that $n_i\leq n<n_{i+1}$. So it suffices to construct $(\theta_{n_i})_{i=1}^\infty$ as above.

Fix $i\geq 1$, and define the $\cL(M)$-formula
\[
\Phi(x_1,\ldots,x_{n_i};y):=\bigvee_{\substack{I\seq[n_i]\\ |I|>n_\phi}}\left(\bigwedge_{j\in I}\big(\phi(x_j;y)\wedge\neg\psi(y)\big)\vee\bigwedge_{j\in I}\big(\neg\phi(x_j;y)\wedge \psi(y)\big)\right).
\]
Then $p^{(n_i)}(x_1,\ldots,x_{n_i})\wedge\Phi(x_1,\ldots,x_{n_i};y)$ is inconsistent. Setting $\theta_{n_i}(x_1,\ldots,x_{n_i}):=\forall y\neg\Phi(x_1,\ldots,x_{n_i};y)$, we have $\theta_{n_i}(x_1,\ldots,x_{n_i})\in p^{(n_i)}$. It is straightforward to verify that $\theta_{n_i}(x_1,\ldots,x_{n_i})$ satisfies $(\dagger)$.

$(iii)\Rightarrow (ii)$. Assume $(iii)$. As noted after Definition \ref{def:fim}, $p$ is finitely approximated in $M$ and thus $M$-invariant. Fix a Morley sequence $(a_i)_{i<\omega}$ in $p$ over $M$, and some $\phi(x;b)\in p$. Let $I=\{i<\omega:\cU\models\phi(a_i)\}$. By $(iii)$, we may choose $n$ sufficiently large and an $\cL(M)$-formula $\theta(x_1,\ldots,x_n)\in p^{(n)}$ such that, for any $\abar'\models\theta(\xbar)$, $|p(\phi(x))-\Av_{\abar'}(\phi(x;b))|<1$. Note, in particular, that $\theta(a_{i_1},\ldots,a_{i_n})$ holds for any $i_1<\ldots<i_n<\omega$. We now have $|\omega\backslash I|<n$ since, if not, then there are $i_1<\ldots<i_n<\omega$ such that $\neg\phi(a_{i_j};b)$ holds for all $1\leq j\leq n$, and so $p(\phi(x;b))-\Av_{(a_{i_1},\ldots,a_{i_n})}(\phi(x;b))=1$, contradicting the choice of $n$ and $\theta$.
\end{proof}

The previous proposition can be taken as evidence that frequency interpretation measures provide a compatible generalization of the standard notion of generic stability for types to the class of all measures. 

\begin{remark}
Suppose $p\in S_x(\cU)$ is generically stable over $M\prec\cU$, and let $\phi(x;y)$ be an $\cL$-formula. Then we have $\cL(M)$-formulas $(\theta_n)_{n=1}^\infty$ witnessing that $p$ is a frequency interpretation measure over $M$ (as in Definition \ref{def:fim}). By Proposition \ref{prop:dfsopp}, and the proof of Proposition \ref{prop:PTgs}, we see that $\theta_n$ is of the form $\forall y\neg\Phi(x_1,\ldots,x_n;y)$, where $\Phi(x_1,\ldots,x_n;y)$ is a Boolean combination of $\phi(x_i,y)$ and a $\phi^*$-formula $\psi(y)$ over $M$. In particular, if $\cL_0\seq\cL$ contains $\phi(x;y)$, then $p|_{\cL_0}$ is still generically stable over $M$ with respect to $T|_{\cL_0}$. 
\end{remark}

Call a global type $p\in S_x(\cU)$ \emph{stable} over $M\prec\cU$ if $p|_M$ is a \emph{stable type}, i.e., there does not exist a formula $\phi(x;y)$, an $M$-indiscernible sequence $(a_i)_{i<\omega}$ of realizations of $p|_M$, and a sequence $(b_i)_{i<\omega}$ from $\cU^y$ such that $\cU\models\phi(a_i;b_j)$ if and only if $i\leq j$. It is not hard to show that $p\in S_x(\cU)$ is stable over $M\prec\cU$ if and only if $p=\Av(a_i)_{i<\omega}$ for any indiscernible sequence $(a_i)_{i<\omega}$ of realizations of $p|_M$ (see, e.g., \cite{ACPgs}). In particular, if $p\in S_x(\cU)$ is stable over $M\prec\cU$, then it is generically stable over $M$. Using a similar proof (which we leave as an exercise), one obtains an analogous characterization of generic stability in terms of the order property.

\begin{proposition}
Suppose $p\in S_x(\cU)$ is $M$-invariant for some $M\prec\cU$. Then $p$ is generically stable over $M$ if and only if there does not exist a formula $\phi(x;y)$, a Morley sequence $(a_i)_{i<\omega}$ in $p$ over $M$, and a sequence $(b_i)_{i<\omega}$ from $\cU^y$ such that $\cU\models\phi(a_i;b_j)$ if and only if $i\leq j$.
\end{proposition}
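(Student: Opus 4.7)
The plan is to prove both directions via Proposition~\ref{prop:PTgs}, invoking the frequency interpretation characterization for the forward direction and, by contrapositive, building an order-property witness from the failure of $p=\Av(a_i)_{i<\omega}$ on some Morley sequence for the backward direction.

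For the forward direction, I would assume $p$ is generically stable over $M$ and, for contradiction, suppose that some formula $\phi(x;y)$, Morley sequence $(a_i)_{i<\omega}$ in $p$ over $M$, and $(b_j)_{j<\omega}$ witness the order property. Proposition~\ref{prop:PTgs} gives $\cL(M)$-formulas $\theta_n(x_1,\ldots,x_n)$ satisfying Definition~\ref{def:fim} for $\phi$. Since $p$ is a type, each $p^{(n)}$ is $\{0,1\}$-valued, so $p^{(n)}(\theta_n)\to 1$ forces $p^{(n)}(\theta_n)=1$ for all sufficiently large $n$; hence any Morley $n$-tuple in $p$ over $M$ realizes $\theta_n$. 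Fixing $n$ so that Definition~\ref{def:fim}(i) gives approximation within $\epsilon=1/2$, the tuple $(a_0,\ldots,a_{n-1})$ realizes $\theta_n$. For $j\ge n-1$ we then have $\Av_{(a_0,\ldots,a_{n-1})}(\phi(x;b_j))=1$ (since $\phi(a_i;b_j)$ holds for each $i<n$), yet $p(\phi(x;b_j))=0$ (since $\{i<\omega:\phi(a_i;b_j)\}=\{0,\ldots,j\}$ is finite and $p=\Av(a_i)_{i<\omega}$ by Proposition~\ref{prop:PTgs}(ii)). The bound $|0-1|<1/2$ is the required contradiction.

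For the backward direction I would argue by contrapositive. If $p$ is not generically stable, Proposition~\ref{prop:PTgs}(ii) supplies a Morley sequence $(c_i)_{i<\omega}$ in $p$ over $M$ and a formula $\phi(x;b)$ with $\phi(x;b)\in p$ and $\{i<\omega:\neg\phi(c_i;b)\}$ infinite (after swapping $\phi$ with $\neg\phi$ if necessary). Passing to a subsequence we may assume $\neg\phi(c_i;b)$ for every $i<\omega$, and then extend to a Morley sequence $(c_i)_{i<\omega+\omega}$ in $p$ over $M$ by choosing $c_i\models p|_{Mb(c_k)_{k<i}}$ for $\omega\le i<\omega+\omega$, so that $\phi(c_i;b)$ holds on this second block. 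Fix a Morley $n$-tuple $\abar=(a_0,\ldots,a_{n-1})$ in $p$ over $M$. For each $j\in\{0,\ldots,n-1\}$ the increasing subsequence
\[
\dbar^j:=(c_0,c_1,\ldots,c_j,c_\omega,c_{\omega+1},\ldots,c_{\omega+n-2-j})
\]
of $(c_i)_{i<\omega+\omega}$ is a Morley $n$-tuple in $p$ over $M$ on which $\phi(\cdot;b)$ takes $j+1$ false values followed by $n-1-j$ true values. Since all Morley $n$-tuples in $p$ over $M$ realize the common type $p^{(n)}|_M$, there is an automorphism $\tau_j$ of $\cU$ fixing $M$ pointwise with $\tau_j(\dbar^j)=\abar$; setting $b_j:=\tau_j(b)$ yields $\phi(a_i;b_j)$ iff $i>j$, equivalently $\neg\phi(a_i;b_j)$ iff $i\le j$. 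Finite consistency of the resulting partial type, combined with compactness, then produces an infinite order-property witness for the formula $\neg\phi(x;y)$.

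The main obstacle is the construction in the backward direction: one must verify that each juxtaposed tuple $\dbar^j$ is indeed Morley in $p$ over $M$ and exhibits the prescribed $\phi(\cdot;b)$-pattern. Both properties hinge on the extension step $c_i\models p|_{Mb(c_k)_{k<i}}$ for $i\ge\omega$, which ensures that every increasing subsequence of $(c_i)_{i<\omega+\omega}$ remains a Morley sequence while simultaneously forcing $\phi(\cdot;b)$ throughout the second block.
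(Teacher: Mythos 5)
The paper gives no proof of this proposition, merely remarking that it follows "using a similar proof" to the stability characterization ($p$ stable over $M$ iff $p=\Av(a_i)_{i<\omega}$ for all $M$-indiscernible $(a_i)_{i<\omega}$ of realizations of $p|_M$). Your proof, routed through Proposition~\ref{prop:PTgs}, is correct and is consistent with what the authors intend; both directions go through the $\Av$-characterization of generic stability, which is exactly the analogue of the stability characterization they are pointing at.

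A few observations. In the forward direction you invoke the frequency interpretation machinery to get the $\theta_n$; this works, but is heavier than necessary. A more direct route, closer in spirit to the stability case, is to use the uniform bound $n_\phi$ from \cite[Proposition 3.2]{HP} (already cited inside the proof of Proposition~\ref{prop:PTgs}): since $\{i:\phi(a_i;b_j)\}=\{0,\ldots,j\}$ is finite, $\neg\phi(x;b_j)\in p=\Av(a_i)$, whence at most $n_\phi$ of the $a_i$ satisfy $\phi(x;b_j)$, contradicting $j+1>n_\phi$ for $j\geq n_\phi$. Also, a Morley $n$-tuple $(a_0,\ldots,a_{n-1})$ realizes $p^{(n)}|_M$ in \emph{reversed} variable order, so strictly speaking it is $(a_{n-1},\ldots,a_0)$ that realizes $\theta_n$; since $\Av_{\abar}$ is order-independent (and the $\theta_n$ built in the proof of Proposition~\ref{prop:PTgs} are in fact symmetric), this is harmless, but worth noting. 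The backward direction is the substantive part and you handle it carefully: the extension to a Morley sequence over $\omega+\omega$ indexed so that the second block realizes $p|_{Mb(c_k)_{k<i}}$, the observation that the juxtaposed increasing subtuples $\dbar^j$ are Morley $n$-tuples and hence $M$-conjugate to $\abar$, and the final compactness step against a fixed infinite Morley sequence are all correct.
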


For types in arbitrary theories, generic stability is a strengthening of stationarity. In order to make this precise, we recall some definitions.  

\begin{definition}\label{def:stat}
Let $p\in S_x(\cU)$ be a global type and fix $M\prec\cU$.
\begin{enumerate}
\item $p$ is \textbf{stationary over $M$} if, for any $M\seq C\subset\cU$, $p$ is the unique global nonforking extension of $p|_C$. 
\item $p$ is \textbf{weakly stationary over $M$} if $p$ is the unique global nonforking extension of $p|_M$.
\end{enumerate}
\end{definition}

In \cite{HP}, Hrushovski and Pillay give an example of a type $p\in S_x(\cU)$ in an NIP (in fact, C-minimal) theory such that $p$ is weakly stationary over some $M\prec\cU$, but not stationary over $M$. On the other hand, if $T$ is simple then stationarity and weak stationarity are the same by transitivity of nonforking.

\begin{remark}
If $p\in S_x(B)$ does not fork over $C\seq B$ then $p$ has a global extension which does not fork over $C$. In particular, if $p\in S_x(\cU)$ is weakly stationary over $M\prec\cU$ then, for any $B\supseteq M$, $p|_B$ is the unique nonforking extension of $p|_M$ to $B$.
\end{remark}

\begin{fact}\label{fact:gscons}
Suppose $p\in S_x(\cU)$ is generically stable over $M\prec\cU$.
\begin{enumerate}[$(a)$]
\item $p$ is stationary over $M$.
\item If $a\models p|_M$ and $b$ is a tuple from $\cU$ such that $b\ind_M M$, then $a\ind_M b$ if and only if $b\ind_M a$ (where $\ind$ denotes nonforking independence).
\end{enumerate}
\end{fact}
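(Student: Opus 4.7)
For part (a), my plan is to apply the order-property characterization of generic stability established in the proposition immediately above Definition \ref{def:stat}. Suppose for contradiction that $q$ is a global nonforking extension of $p|_C$ with $q\neq p$, for some $M\seq C\subset \cU$, and pick $\phi(x;b_0)\in q\setminus p$. By $M$-invariance of $p$, $\neg\phi(x;b')\in p$ for all $b'\equiv_M b_0$. I would construct, by mutual recursion, a Morley sequence $(a_i)_{i<\omega}$ in $p$ over $M$ together with a sequence $(b_j)_{j<\omega}$ of $M$-conjugates of $b_0$ such that $\phi(a_i;b_j)$ holds iff $i\leq j$. The $a_i$ are obtained by realizing $p$ over the preceding parameters, which forces $\neg\phi(a_i;b_j)$ for $j<i$. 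At each stage, the new $b_j$ must realize $\tp_{\phi^*}(b_0/M)\cup\{\phi(a_i;y):i\leq j\}$, and existence follows by a compactness argument combining $\phi(x;b_0)\in q$ with non-dividing of $\phi(x;b_0)$ over $C$ (implied by non-forking of $q$). The resulting configuration contradicts the order-property characterization, so $q=p$.

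For part (b), I use stationarity from (a): since $a\models p|_M$, we have $a\ind_M b$ iff $\tp(a/Mb)=p|_{Mb}$. For the direction $a\ind_M b \Rightarrow b\ind_M a$, assume $a\ind_M b$, fix $\phi(y;a)\in\tp(b/Ma)$, and build a Morley sequence $(a_i)_{i<\omega}$ in $p$ over $Mb$ with $a_0=a$; $Mb$-indiscernibility forces $\phi(b;a_i)$ for all $i$. The key lemma is that any $M$-indiscernible sequence $(a'_i)$ with $a'_0=a$ is itself a Morley sequence in $p$ over $M$: inductively, $\tp(a'_{i+1}/Ma'_0\cdots a'_i)$ is a nonforking extension of $p|_M$, hence equals $p|_{Ma'_0\cdots a'_i}$ by stationarity. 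Two Morley sequences in $p$ over $M$ with the same initial term are $\equiv_{Ma}$-equivalent, so there is an $Ma$-automorphism sending $(a_i)$ to $(a'_i)$; its image of $b$ realizes $\{\phi(y;a'_i):i<\omega\}$, so $\phi(y;a)$ does not divide over $M$, and $b\ind_M a$. For the converse, if $a\nind_M b$, stationarity yields $\phi(x;b)\in\tp(a/Mb)$ with $\neg\phi(x;b)\in p$; combining the frequency-interpretation bound from Proposition \ref{prop:PTgs} with non-dividing of $\phi(a;y)$ (from $b\ind_M a$) and total indiscernibility of a Morley sequence in $p$ over $M$ starting at $a$ yields the contradiction.

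The main obstacle is part (a): the compactness argument securing each $b_j$ must carefully interleave $\phi(x;b_0)\in q$, $M$-invariance of $p$, and non-forking of $q$ over $C$, and is non-trivial because, outside of NIP, $q$ itself may fail to be $M$-invariant.
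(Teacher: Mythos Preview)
The paper does not prove this fact; it simply cites \cite[Proposition~1]{PiTa} for part~(a) and \cite{GOU} for part~(b). You are attempting considerably more than the paper does, and there are genuine gaps in both parts.

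\textbf{Part (a).} Your recursive construction requires, at stage $j$, a $b_j\equiv_M b_0$ with $\phi(a_i;b_j)$ for all $i\leq j$, where $a_0,\ldots,a_j$ are already-chosen realizations of $p$. Your justification invokes ``non-dividing of $\phi(x;b_0)$ over $C$,'' but non-dividing concerns $C$-indiscernible sequences in the \emph{parameter} variable $b_0$ and says nothing about realizing $\{\phi(a_i;y):i\leq j\}\cup\tp(b_0/M)$ for fixed $a_i$'s. Unwinding by an $M$-automorphism, what you need is $(a'_i)_{i\leq j}\equiv_M(a_i)_{i\leq j}$ with $\phi(a'_i;b_0)$ for all $i$; the only source of such $a'_i$ is the type $q$, but iterating realizations of $q$ gives no control over $\tp(a'_{i+1}/Ma'_{\leq i})$ unless you already know $q$ is stationary over $M$---which is what you are trying to prove. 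You correctly flag this as the main obstacle, but you have not resolved it. The published proof in \cite{PiTa} proceeds differently, using the uniform bound $n_\phi$ and a direct analysis of a Morley sequence over $Cb$ together with nonforking realizations of $q$.

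\textbf{Part (b), forward direction.} Your ``key lemma''---that any $M$-indiscernible sequence $(a'_i)$ with $a'_0=a\models p|_M$ is a Morley sequence in $p$ over $M$---is false. The inductive claim ``$\tp(a'_{i+1}/Ma'_{\leq i})$ is a nonforking extension of $p|_M$'' has no justification: $M$-indiscernibility alone does not prevent $a'_1$ from forking with $a'_0$ over $M$ (the constant sequence is already a counterexample, and in general forking indiscernible sequences exist even in stable theories). The correct statement, as in \cite[Fact~1.9(iv)]{ACPgs}, requires the additional hypothesis $a'_0\ind_M(a'_i)_{i>0}$, which you cannot assume when testing dividing of $\phi(y;a)$ against an arbitrary $M$-indiscernible sequence. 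The argument in \cite{GOU} avoids this by a different route.

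Your sketch for the converse direction of (b) is essentially correct: realizing $\tp(b/Ma)$ along a Morley sequence (using $b\ind_M a$), then applying $M$-invariance of $p$ together with the $n_\phi$ bound, does yield the contradiction.
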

\begin{proof}
Part $(a)$ is in \cite[Proposition 1]{PiTa}. Part $(b)$ is due to Pillay, and the proof is given in \cite{GOU}. See also \cite[Fact 1.9]{ACPgs}.
\end{proof}

\begin{remark}
Given Proposition \ref{prop:PTgs}, a natural question is whether either conclusion of Fact \ref{fact:gscons} holds for finitely approximated types. So we point out this is not the case. In particular, let $T$ be the theory of the generic triangle-free graph, and let $p\in S_1(\cU)$ be the unique  type containing $\neg E(x,b)$ for all $b\in\cU$. By the characterization of forking from \cite{Co13}, $p$ is not weakly stationary over any $M\prec\cU$, and the condition in Fact \ref{fact:gscons}$(b)$ fails for any $M\prec\cU$. However, we will show in Section \ref{sec:Kt} that $p$ is finitely approximated.
\end{remark}

In NIP theories, if $p\in S_x(\cU)$ is stationary over $M\prec\cU$ then it is generically stable over $M$ (see \cite[Proposition 3.2]{HP} or \cite[Theorem 7.6]{UsvyGS}), and so stationarity characterizes generic stability. Our next result uses work of Chernikov and Kaplan from \cite{ChKa} to prove an analogous characterization  for $\NTP_2$ theories. Call a global type $p\in S_x(\cU)$ \textbf{strictly invariant over} $M\prec\cU$ if $p$ is $M$-invariant and $B\ind_M a$ for any $B\supseteq M$ and $a\models p|_B$. Given a model $M\prec\cU$, we say that ``forking equals dividing over $M$" if any \emph{$\cL(\cU)$-formula} that forks over $M$ also divides over $M$.

\begin{lemma}\label{lem:SSI}
Suppose $p\in S_x(\cU)$ is weakly stationary over $M\prec\cU$ and forking equals dividing over $M$. Then $p$ is strictly invariant over $M$. Moreover, if $a\models p|_M$, $b$ is a tuple from $\cU$, and $a\ind_M b$, then $b\ind_M a$. 
\end{lemma}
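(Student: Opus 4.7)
The plan is to combine weak stationarity with the Chernikov--Kaplan result on existence of strictly invariant extensions, which holds under the hypothesis that forking equals dividing over $M$. The key observation is that any strictly invariant global extension of $p|_M$ automatically does not fork over $M$, so weak stationarity forces such an extension to coincide with $p$ itself.

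First I would note that $p$ is $M$-invariant: for any $\sigma\in\operatorname{Aut}(\cU/M)$, $\sigma(p)$ is a global nonforking extension of $\sigma(p|_M)=p|_M$, hence $\sigma(p)=p$ by weak stationarity. Next I would invoke the theorem of Chernikov and Kaplan from \cite{ChKa} which, in the presence of forking equals dividing over $M$, produces for every complete type in $S_x(M)$ a global extension that is strictly invariant over $M$. Applying this to $p|_M$ yields some $p^{*}\in S_x(\cU)$ strictly invariant over $M$. Since $p^{*}$ is in particular $M$-invariant, it does not fork over $M$, so by weak stationarity $p^{*}=p$. Therefore $p$ itself is strictly invariant over $M$.

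For the moreover clause, assume $a\models p|_M$ and $a\ind_M b$. Then $\tp(a/Mb)$ is a nonforking extension of $p|_M$ to $Mb$, so by the remark immediately preceding Fact \ref{fact:gscons}, $\tp(a/Mb)=p|_{Mb}$; that is, $a\models p|_{Mb}$. Strict invariance of $p$, applied with $B=Mb$, gives $Mb\ind_M a$, and monotonicity on the left yields $b\ind_M a$.

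The main obstacle is locating and citing the correct direction of the Chernikov--Kaplan equivalence: one needs the implication that forking equals dividing over a model $M$ suffices to produce strictly invariant global extensions of types over $M$. Once this ingredient is in hand, the remainder of the argument reduces to uniqueness bookkeeping from weak stationarity and basic monotonicity of nonforking.
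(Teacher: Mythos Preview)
Your proposal is correct and follows essentially the same route as the paper: both arguments invoke the Chernikov--Kaplan existence result (specifically \cite[Proposition~3.7]{ChKa}) to produce a strictly invariant global extension of $p|_M$, then use weak stationarity to identify this extension with $p$, and finally deduce the symmetry clause by realizing $p|_{Mb}$. The only cosmetic differences are that the paper's citation of \cite[Proposition~3.7]{ChKa} directly yields nonforking of the extension (rather than inferring it from $M$-invariance), and the paper suppresses the explicit monotonicity step from $Mb\ind_M a$ to $b\ind_M a$.
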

\begin{proof}
This will be obtained directly from \cite[Proposition 3.7]{ChKa}, and so we will follow the terminology of that paper. In particular, $\ind$ is a standard pre-independence relation satisfying finite character. Moreover, by assumption, $M$ is an extension base for nonforking, and forking implies quasi-dividing over $M$. Altogether, all of the hypotheses of \cite[Proposition 3.7]{ChKa} are satisfied and so we obtain $q\in S_x(\cU)$ such that $q$ extends $p|_M$, $q$ does not fork over $M$, and if $B\supseteq M$ and  $a\models q|_B$ then $B\ind_M a$. By weak stationarity of $p$ over $M$, we have $p=q$. Note also that $p$ is $M$-invariant since it is weakly stationary over $M$. So $p$ is strictly invariant over $M$. Moreover, if $a\models p|_M$, $b$ is a tuple from $\cU$, and $a\ind_M b$, then $a\models p|_{Mb}$ by weak stationarity, and so $b\ind_M a$. 
\end{proof}

\begin{theorem}\label{thm:NTP2}
Assume $T$ is $\NTP_2$. Given $p\in S_x(\cU)$ and $M\prec\cU$, the following are equivalent:
\begin{enumerate}[$(i)$]
\item $p$ is generically stable over $M$;
\item $p$ is weakly stationary over $M$ and, for any $a\models p|_M$ and any tuple $b$ from $\cU$, if $b\ind_M a$ then $a\ind_M b$.
\end{enumerate}
\end{theorem}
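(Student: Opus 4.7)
For $(i)\Rightarrow(ii)$, I would apply Fact \ref{fact:gscons} directly. Part $(a)$ gives that $p$ is stationary over $M$, which is stronger than weak stationarity. For the symmetry requirement, I would invoke part $(b)$, noting that the condition $b\ind_M M$ is automatic (it merely says $\tp(b/M)$ does not fork over $M$). Thus for any $a\models p|_M$ and any tuple $b$, we have $a\ind_M b\Leftrightarrow b\ind_M a$, which in particular yields the direction required in (ii).

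For $(ii)\Rightarrow(i)$, the plan is to derive a contradiction with $\NTP_2$ from the failure of generic stability. First I would upgrade the hypotheses using Lemma \ref{lem:SSI}: since $T$ is $\NTP_2$, forking equals dividing over the model $M$ by \cite{ChKa}, and combined with the weak stationarity in (ii), Lemma \ref{lem:SSI} tells us that $p$ is strictly invariant over $M$ and that $a\ind_M b\Rightarrow b\ind_M a$ whenever $a\models p|_M$. Together with the converse direction in (ii), this gives full symmetry of $\ind_M$ when one argument realizes $p|_M$.

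Next, assuming toward a contradiction that $p$ is not generically stable, I would use Proposition \ref{prop:PTgs} to produce a Morley sequence $(a_i)_{i<\omega}$ in $p$ over $M$ and a formula $\phi(x;b)\in p$ with $\neg\phi(a_i;b)$ for infinitely many $i$. Passing to a subsequence (which remains a Morley sequence in $p$) I may assume $\neg\phi(a_i;b)$ for every $i$, and by Ramsey plus compactness I may further assume that $(a_i)$ is $Mb$-indiscernible; it remains a Morley sequence in $p$ over $M$ because its EM-type over $M$ is preserved and, by $M$-invariance of $p$, any $M$-indiscernible sequence with this EM-type is Morley. Since $\phi(x;b)\in p$ while $a_i\not\models\phi(x;b)$, and since by weak stationarity $p|_{Mb}$ is the unique nonforking extension of $p|_M$, we conclude $a_i\nind_M b$; by the symmetry established above, $b\nind_M a_i$. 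Using $\NTP_2$'s forking-equals-dividing, $\tp(b/Ma_i)$ contains a formula dividing over $M$, and $Mb$-indiscernibility allows me to take a single formula $\chi(y;x,m)$ with $m\in M$ such that $\chi(y;a_i,m)\in\tp(b/Ma_i)$ divides over $M$ uniformly in $i$.

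The final, and main, step is the contradiction via Chernikov and Kaplan's theorem (\cite{ChKa}) that dividing over a model in an $\NTP_2$ theory is detected by strict Morley sequences. Since $p$ is strictly invariant, $(a_i)$ is a strict Morley sequence in $\tp(a_0/M)$, so $\{\chi(y;a_i,m):i<\omega\}$ must be inconsistent — contradicting the fact that $b$ realizes every $\chi(y;a_i,m)$. The main obstacle I anticipate is verifying that the detection theorem applies cleanly with the uniform data assembled here (same $\chi$ and same $m$ across all $i$), and that the strict Morley sequence notion furnished by strict invariance matches the one used by Chernikov and Kaplan; both should work out, but this is the nontrivial content tying the argument together.
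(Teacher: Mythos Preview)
Your proposal is correct and follows essentially the same route as the paper: both directions rest on Fact~\ref{fact:gscons}, Lemma~\ref{lem:SSI}, and the strict-Morley Kim's Lemma from \cite{ChKa}. The only cosmetic difference is that in $(ii)\Rightarrow(i)$ you first deduce $a_i\nind_M b$ from weak stationarity and then contradict Kim's Lemma, whereas the paper applies Kim's Lemma first to obtain $b\ind_M a_0$ and then contradicts weak stationarity---these are contrapositive rearrangements of the same argument.
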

\begin{proof}
By \cite[Theorem 1.1]{ChKa}, forking equals dividing over $M$. In particular, $b\ind_M M$ for any tuple $b$, and so we have $(i)\Rightarrow(ii)$ by Fact \ref{fact:gscons}. Now assume $(ii)$. Note that $p$ is $M$-invariant. To show $p$ is generically stable over $M$, it suffices by Proposition \ref{prop:PTgs} to fix a formula $\phi(x;b)\in p$ and a Morley sequence $(a_i)_{i<\omega}$ in $p$ over $M$, and show $\{i<\omega:\cU\models\neg\phi(a_i,b)\}$ is finite. Suppose this fails. After restricting to a subsequence and then replacing it with an $Mb$-indiscernible realization of the EM-type over $Mb$, we have an $Mb$-indiscernible sequence $(a_i)_{i<\omega}$, which is still a Morley sequence in $p$ over $M$ (since $p$ is $M$-invariant), and is  such that  $\cU\models\neg\phi(a_0,b)$.

Let $q(x)=\tp(a_0/Mb)$ and $r(x,y)=\tp(a_0,b/M)$. Then $b$ realizes $\bigcup_{i<\omega}r(a_i,y)$ and $a_i\equiv_M a_0$ for all $i<\omega$. Since $p$ is strictly invariant over $M$ by Lemma \ref{lem:SSI}, and $(a_i)_{i<\omega}$ is a Morley sequence in $p$ over $M$, we may apply   \cite[Lemma 3.14]{ChKa} (which is an analogue of Kim's Lemma for ``strictly invariant" sequences in $\NTP_2$ theories) to conclude $b\ind_M a_0$. So $q$ does not fork over $M$ by $(ii)$.  Now, since $p|_M=q|_M$ and $p$ is weakly stationary over $M$, we must have $q=p|_{Mb}$. But $\neg\phi(x,b)\in q$ and $\phi(x,b)\in p$, which is a contradiction.
\end{proof}

\begin{remark}$~$
\begin{enumerate}[$(1)$]
\item The symmetry assumption in condition $(ii)$ of Theorem \ref{thm:NTP2} is necessary, even for NIP theories, due to the example from \cite{HP} mentioned after Definition \ref{def:stat}.
\item In \cite{SimGSG}, Simon calls a type $p\in S_x(\cU)$ \emph{generically simple over $M\prec\cU$} if $p$ does not fork over $M$ and, for any $a\models p|_M$ and any tuple $b$ from $\cU$, if $b\ind_M a$ then $a\ind_M b$. So  if $T$ is $\NTP_2$, then $p\in S_x(\cU)$ is generically stable over $M\prec\cU$ if and only if it is weakly stationary over $M$ and generically simple over $M$. 
\end{enumerate} 
\end{remark}

\begin{question}
Suppose $T$ is $\NTP_2$ and $p\in S_x(\cU)$ is stationary over $M\prec\cU$. Is $p$ generically simple over $M$?
\end{question}

We also note that if $T$ is simple, then $p\in S_x(\cU)$ is generically stable over $M\prec\cU$ if and only if it is (weakly) stationary over $M$. Moreover, $p\in S_x(\cU)$ is stationary over some $M\prec\cU$ if and only if it has a unique nonforking extension to any larger model. (The right-to-left direction follows from local character and independent amalgamation for forking in simple theories; see, e.g., \cite[Proposition 17.3]{Cabook}.)

It would be interesting to pursue a generalization of Theorem \ref{thm:NTP2} involving frequency interpretation measures in $\NTP_2$ theories. However, this would likely require a better understanding of the general theory of Keisler measures outside of NIP theories, which is still fairly underdeveloped.

\section{dfs-trivial theories}\label{sec:dfs}

We call a global Keisler measure is \textbf{\df} if it is definable and finitely satisfiable in some small model.

\begin{definition}
Fix a variable sort $x$.
\begin{enumerate}
\item Given $a\in\cU^x$, let $\delta_a\in\fM_x(\cU)$ denote the \textbf{Dirac measure on $a$}, i.e., given an $\cL_\cU$-formula $\phi(x)$, $\delta_a(\phi(x))=1$ if and only if $\cU\models\phi(a)$.
\item A measure $\mu\in \fM_x(\cU)$ is \textbf{trivial} if it is in the closure (in the strong topology) of the convex hull of the Dirac measures of points in $\mathcal{U}^{x}$, i.e., there are sequences $(a_n)_{n=0}^\infty$ from $\cU^x$ and $(r_n)_{n=0}^\infty$ from $[0,1]$ such that $\sum_{n=0}^\infty r_n=1$ and $\mu=\sum_{n=0}^\infty r_n\delta_{a_n}$.
\item Let $\fM^{\triv}_x(\cU)$, $\fM^{\dfs}_x(\cU)$, $\fM^{\fa}_x(\cU)$, and $\fM^{\fim}_x(\cU)$ denote the spaces of trivial measures, \df\ measures, finitely approximated measures, and frequency interpretation measures, respectively. 
\item A set $\Omega\seq\fM_x(\cU)$ is \textbf{closed under localization} if, for any $\mu\in\Omega$ and any Borel subset $X\seq S_{x}(\cU)$ with $\mu(X)>0$, $\Omega$ contains the Keisler measure 
\[
\phi(x)\mapsto \mu(\phi(x)\cap X)/\mu(X)
\]
(we call this measure the \textbf{localization of $\mu$ at $X$}).
\end{enumerate}
\end{definition}

Note that, in the last definition above, we have identified $\mu\in\fM_x(\cU)$ with the associated Borel probability measure on $S_x(\cU)$. Note also that a type $p\in S_x(\cU)$ is trivial if and only if it is realized in $\cU$.

\begin{remark}
$\fM^{\triv}_x(\cU)\seq\fM^{\fim}_x(\cU)\seq\fM^{\fa}_x(\cU)\seq\fM^{\dfs}_x(\cU)$,
and each of these sets is closed under localization.
\end{remark}

\begin{proposition}\label{prop:localization}
Suppose $\Omega\seq\fM_x(\cU)$ is closed under localization. Then $\Omega\seq\fM^{\triv}_x(\cU)$ if and only if, for any $\mu\in\Omega$,  there is $b\in\cU^x$ such that $\mu(x=b)>0$. 
\end{proposition}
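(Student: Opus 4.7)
The forward direction is immediate: if $\mu = \sum_{n} r_n \delta_{a_n}$ with $\sum_n r_n = 1$, then some $r_n > 0$, and $\mu(x = a_n) \geq r_n > 0$. So the entire content is the reverse direction, which I would attack via a standard ``atoms exhaustion'' argument inside $S_x(\cU)$, exploiting closure under localization to invoke the hypothesis on a well-chosen residue.

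The plan is the following. Fix $\mu \in \Omega$, viewed as a regular Borel probability measure on $S_x(\cU)$, and let $R \seq S_x(\cU)$ be the set of realized types (each such type is a clopen singleton $\{p_b\}$ with $b \in \cU^x$, because $x = b$ is a formula). Let
\[
A = \{q \in R : \mu(\{q\}) > 0\}
\]
be the set of \emph{realized} atoms of $\mu$. Since $\sum_{q \in A} \mu(\{q\}) \leq 1$, the set $A$ is at most countable, hence Borel (each singleton is closed in the Hausdorff space $S_x(\cU)$). The key claim will be that $\mu(A) = 1$; once this is established, writing $A = \{q_n : n \in \N\}$ and choosing realizers $a_n$ with $q_n = \tp(a_n/\cU)$ yields $\mu = \sum_n \mu(\{q_n\}) \delta_{a_n}$ with $\sum_n \mu(\{q_n\}) = 1$, so $\mu$ is trivial.

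To prove $\mu(A) = 1$, I would argue by contradiction: if $X := S_x(\cU) \setminus A$ satisfies $\mu(X) > 0$, form the localization $\mu' \in \Omega$ of $\mu$ at $X$. Applying the hypothesis to $\mu'$ gives some $b \in \cU^x$ with $\mu'(x = b) > 0$, which unpacks to $\mu(\{p_b\} \cap X) > 0$. Two cases, both contradictory: if $p_b \in A$ then $\{p_b\} \cap X = \emptyset$; if $p_b \notin A$ then $\mu(\{p_b\}) = 0$ and a fortiori $\mu(\{p_b\} \cap X) = 0$.

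There is no genuine obstacle here; the only point that requires mild care is the interface between the formula-level definition of a Keisler measure and its incarnation as a Borel measure on $S_x(\cU)$ (so that ``atom at a realized point'' means exactly $\mu(x = b) > 0$, and the localization at the Borel set $X$ is in $\Omega$ by hypothesis). The cardinality bookkeeping, namely that $A$ is at most countable and hence Borel, is the standard fact that a finite measure has at most countably many atoms, and needs no further input from the theory.
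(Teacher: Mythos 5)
Your proposal is correct and takes essentially the same route as the paper: identify the (countable) set of realized atoms, then use closure under localization applied to the residue to show it carries no mass. The paper phrases the final step as ``any Borel set disjoint from the set $S$ of atoms has measure zero'' while you localize once at the specific set $S_x(\cU)\setminus A$; these are interchangeable.
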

\begin{proof}
The left-to-right-direction is clear. So assume that for any $\mu\in\Omega$, there is $b\in\cU^x$ such that $\mu(x=b)>0$. Fix $\mu\in\Omega$ and let $S=\{b\in\cU^x: \mu(x=b)>0\}$. 

We first argue that $S$ is countable. Given $b\in X$, let $n(b)\in\N_{\geq 2}$ be such that $\frac{1}{n(b)}<\mu(x=b)\leq\frac{1}{n(b)-1}$. If $S$ is uncountable, then there is some infinite $S_0\seq S$ and $n\geq 2$ such that $n(b)=n$ for all $b\in S_0$. So if $Y\seq S_0$ has size $n$, then $\mu(Y)=\sum_{b\in Y}\mu(x=b)>1$, which is a contradiction. 

Let $\nu=\sum_{b\in S}\mu(x=b)\delta_b$. We will show $\mu=\nu$. First, suppose $X\seq S_x(\cU)$ is Borel and $X\cap S=\emptyset$ (here we identify $\cU$ with the set of realized types in $S_x(\cU)$). Then we claim $\mu(X)=0$. If not, then let $\mu_0\in\fM_x(\cU)$ be the localization of $\mu$ at $X$. Then $\mu_0\in\Omega$, and so there is some $b\in\cU^x$ such that $\mu_0(x=b)>0$, which contradicts $X\cap S=\emptyset$. Now, given a Borel set $X\seq S_x(\cU)$, we have $\mu(X)=\mu(X\backslash S)+\mu(X\cap S)=\mu(X\cap S)=\nu(X)$ as desired. 
\end{proof}

For the rest of this section, we assume $T$ is one-sorted.

\begin{definition}
A complete theory $T$ is \textbf{\df-trivial} if every \df\ Keisler measure is trivial, i.e., $\fM^{\dfs}_n(\cU)=\fM^{\triv}_n(\cU)$ for all $n\geq 1$.
\end{definition}

\begin{proposition}\label{prop:onevbl}
$T$ is \df-trivial if and only if $\fM^{\dfs}_1(\cU)=\fM^{\triv}_1(\cU)$.
\end{proposition}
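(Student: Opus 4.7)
My plan is as follows. The forward direction is immediate from the definition, so the content lies in the converse: assuming $\fM^{\dfs}_1(\cU)=\fM^{\triv}_1(\cU)$, I would show by induction on $n$ that $\fM^{\dfs}_n(\cU)=\fM^{\triv}_n(\cU)$. The overall strategy exploits Proposition \ref{prop:localization}: since $\fM^{\dfs}_n(\cU)$ is closed under localization, it suffices to show that every $\mu\in\fM^{\dfs}_n(\cU)$ assigns positive mass to some singleton $\{b\}$ with $b\in\cU^n$.

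To produce such a $b$, I would build it coordinate by coordinate. Fix $\mu\in\fM^{\dfs}_n(\cU)$, definable and finitely satisfiable in $M\prec\cU$. First, extract the marginal in $x_1$: define $\mu_1\in\fM_1(\cU)$ by $\mu_1(\phi(x_1))=\mu(\phi(x_1))$, viewing $\phi(x_1)$ as a formula in $n$ variables with the last $n-1$ unused. Then $\mu_1$ inherits definability over $M$ and finite satisfiability in $M$ directly from $\mu$, so $\mu_1\in\fM^{\dfs}_1(\cU)$. By hypothesis and Proposition \ref{prop:localization}, there exists $b_1\in\cU$ with $\mu_1(x_1=b_1)>0$, i.e., $\mu(x_1=b_1)>0$.

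Next I would localize. Let $\mu'\in\fM^{\dfs}_n(\cU)$ be the localization of $\mu$ at the clopen set defined by $x_1=b_1$. A key (easy) observation is that the hypothesis $\mu(x_1=b_1)>0$ combined with finite satisfiability of $\mu$ in $M$ forces $b_1\in M$: any witness to $\mu(x_1=b_1)>0$ from $M$ must itself equal $b_1$ in its first coordinate. This is what allows $\mu'$ to remain finitely satisfiable in the same $M$; definability over $M$ is clear since $b_1\in M$. Now define $\nu\in\fM_{n-1}(\cU)$ by $\nu(\psi(x_2,\ldots,x_n))=\mu'(\psi(x_2,\ldots,x_n))$. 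One checks that $\nu$ is dfs in $M$ by the same kind of argument used for $\mu_1$.

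Finally, apply the inductive hypothesis to $\nu$: since $\nu\in\fM^{\dfs}_{n-1}(\cU)=\fM^{\triv}_{n-1}(\cU)$, Proposition \ref{prop:localization} furnishes $(b_2,\ldots,b_n)\in\cU^{n-1}$ with $\nu(\textstyle\bigwedge_{i=2}^n x_i=b_i)>0$. Unwinding gives $\mu(\bigwedge_{i=1}^n x_i=b_i)>0$, exhibiting a singleton of positive $\mu$-mass. Invoking Proposition \ref{prop:localization} once more then yields $\mu\in\fM^{\triv}_n(\cU)$, completing the induction. The only place where any care is required is the preservation of the dfs property under marginalization and localization; the crucial point is the observation above that a singleton $\{b_1\}$ of positive mass under a measure finitely satisfiable in $M$ must have $b_1\in M$, so the model witnessing dfs need not be enlarged.
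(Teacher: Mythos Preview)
Your proof is correct and follows essentially the same inductive strategy as the paper: reduce from $n$ variables to fewer by taking a marginal (which remains \df), locate an atom, and observe that the atom must lie in $M$ so that localization stays \df\ over the same model. The only cosmetic differences are that the paper projects to the first $n$ coordinates (then handles the last one) and writes out the full decomposition of $\mu$ explicitly, whereas you project to the first coordinate (then handle the remaining $n-1$) and invoke Proposition~\ref{prop:localization} to avoid that bookkeeping.
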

\begin{proof}
Fix $n\geq 1$ and suppose that every measure in $\fM_n^{\dfs}(\cU)$ is trivial. Suppose $\mu\in\fM^{\dfs}_{n+1}(\cU)$, and let $\mu_0\in \fM_n(\cU)$ be the projection of $\mu$ to the first $n$ variables, i.e., $\mu_0(\phi(x_1,\ldots,x_n))=\mu(\phi(x_1,\ldots,x_n)\wedge x_{n+1}=x_{n+1})$. Note that $\mu_0\in\fM^{\dfs}_n(\cU)$, and thus is trivial by assumption. Fix a countable set $I\subset\cU^n$ and a function $r\colon I\to (0,1]$ such that $\mu_0=\sum_{i\in I}r_i\delta_{i}$. Fix $i\in I$, and let $\nu_i\in\fM_1(\cU)$ be such that $\nu_i(\phi(x))=\frac{1}{r_i}\mu(\phi(x_{n+1})\wedge (x_1,\ldots,x_n)=i)$. Then $\nu_i\in\fM_1^{\dfs}(\cU)$ for all $i\in I$, and so we have $\nu_i=\sum_{j=0}^\infty s^i_j\delta_{a^i_j}$ for some sequences $(a^i_j)_{j=0}^\infty$ from $\cU$ and $(s^i_j)_{j=0}^\infty$ from $[0,1]$. Now we claim that
\[
\mu=\sum_{i\in I}\sum_{j=0}^\infty r_is^i_j\delta_{(i,a^i_j)},
\]
and so $\mu$ is trivial. Let $\xbar=(x_1,\ldots,x_n,x_{n+1})$ and, for $i\in I$, define the formula $\sigma_i(\xbar):=((x_1,\ldots,x_n)=i)\wedge (x_{n+1}=x_{n+1})$. Then $\mu(\sigma_i(\xbar))=r_i$ for any $i\in I$. Since $\sum_{i\in I}r_i=1$, it follows that for any $\cL(\cU)$-formula $\phi(x_1,\ldots,x_n,x_{n+1})$, we have
\begin{multline*}
\mu(\phi(\xbar))=\sum_{i\in I}\mu(\phi(\xbar)\wedge \sigma_i(\xbar))=\sum_{i\in I}\mu(\phi(i,x_{n+1})\wedge \sigma_i(\xbar))= \sum_{i\in I}r_i\nu_i(\phi(i,x))\\
=\sum_{i\in I}\sum_{j=0}^\infty r_is^i_j\delta_{a^i_j}(\phi(i,x))=\sum_{i\in I}\sum_{j=0}^\infty r_is^i_j\delta_{(i,a^i_j)}(\phi(\xbar)).\qedhere
\end{multline*}
\end{proof}

\begin{question}
Does the analogue of Proposition \ref{prop:onevbl} hold for finitely approximable measures or for frequency interpretation measures?
\end{question}

\begin{remark}
If $T$ is NIP then $T$ is \df-nontrivial. This is a standard construction, which we briefly recall (see also, e.g., \cite[Example 7.2]{Sibook}). Assume $T$ is NIP, and let $(a_i)_{i\in [0,1]}$ be a non-constant indiscernible sequence in $\cU$. Define $\mu\in\fM_1(\cU)$ so that $\mu(\phi(x))$ is the Lebesgue measure of $\{i\in [0,1]:\cU\models\phi(a_i)\}$. Since $T$ is NIP, it follows that $\mu$ is a well-defined nontrivial definable Keisler measure, and it is clearly finitely satisfiable in any $M\prec\cU$ containing $(a_i)_{i\in [0,1]}$. 

It should be mentioned that not every NIP theory admits a nontrivial \df\ \emph{type}. For example, in distal theories (which are NIP), any such type must be algebraic (see \cite[Proposition 2.27]{SiD}).  However, if $T$ is stable then any non-algebraic global type is a nontrivial definable and \df\ Keisler measure.
\end{remark}

The next goal is to show that \df-nontriviality is preserved under reducts. First, we make precise our use of the word ``reduct". Let $T_0$ be a complete $\cL_0$-theory in some one-sorted language $\cL_0$ of small cardinality (relative to $\cU$). Without loss of generality, we assume $\cL_0$ is relational. We say $T_0$ is a \textbf{reduct} of $T$ if there is some finite $F\subset\cU$ and, for each $n$-ary relation $R\in \cL_0$,  an $\cL(\cU)$-formula $\theta_R(x_1,\ldots,x_n)$ (with $|x_i|=1$) such that $(\cU\backslash F;(\theta_R)_{R\in \cL_0})\models T_0$. 

\begin{theorem}\label{thm:reduct}
If $T_0$ is a reduct of $T$, and $T_0$ is \df-trivial, then $T$ is \df-trivial.
\end{theorem}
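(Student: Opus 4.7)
The plan is to reduce to the one-variable setting and then transport any hypothetical nontrivial dfs measure on $\cU$ to a nontrivial dfs measure on the reduct structure $\cU_0:=\cU\setminus F$, where the hypothesis on $T_0$ produces an atom. Specifically, Proposition~\ref{prop:onevbl} reduces the problem to showing $\fM^{\dfs}_1(\cU)=\fM^{\triv}_1(\cU)$, and since $\fM^{\dfs}_1(\cU)$ is closed under localization, Proposition~\ref{prop:localization} further reduces it to producing, for each $\mu\in\fM^{\dfs}_1(\cU)$, some $b\in\cU$ with $\mu(x=b)>0$. So I would fix such a $\mu$, pick $M\prec\cU$ over which $\mu$ is dfs, and enlarge $M$ to contain $F$ together with any parameters occurring in the formulas $\theta_R$ witnessing the reduct.

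Since $F$ is finite, finite additivity of $\mu$ immediately supplies an atom whenever $\mu(x\in F)>0$, so I would reduce to the case $\mu(x\in F)=0$. The plan is then to push $\mu$ forward to a Keisler measure $\mu_0$ on $\cU_0$ (viewed as a monster model of $T_0$), show that $\mu_0$ is dfs over $M_0:=M\setminus F$ in $T_0$, and apply dfs-triviality of $T_0$ to extract an atom of $\mu_0$, which is automatically an atom of $\mu$. Writing $\psi^T(\xbar)$ for the $\cL$-formula obtained by substituting $\theta_R$ for each $R\in\cL_0$, the natural definition is
\[
\mu_0(\psi(x;b)):=\mu(\psi^T(x;b));
\]
finite additivity of $\mu_0$ falls out of $\mu(x\in F)=0$, since any two $\cL_0$-disjoint formulas translate to $\cL$-formulas overlapping only inside $F$.

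The crux is to verify that $\mu_0$ is dfs in $T_0$. Finite satisfiability in $M_0$ follows immediately from finite satisfiability of $\mu$ in $M$ together with $\mu(x\in F)=0$. For definability, the natural restriction map $\pi$ from the space of $\cL$-types over $M$ realized in $\cU_0$ onto $S^{T_0}_y(M_0)$ is a continuous surjection between compact Hausdorff spaces, hence a quotient map, and the composition $F^\psi_{\mu_0}\circ\pi$ coincides with the restriction of $F^{\psi^T}_\mu$ to the domain of $\pi$; continuity of the latter (from definability of $\mu$) therefore descends to continuity of $F^\psi_{\mu_0}$. I expect this definability step --- together with the accompanying verification that $M_0\prec\cU_0\models T_0$ and that the type spaces interact as claimed --- to be the main obstacle, but it should ultimately reduce to routine reduct manipulations.
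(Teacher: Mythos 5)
Your proposal follows essentially the same path as the paper: reduce to one variable and atoms via Propositions \ref{prop:onevbl} and \ref{prop:localization}, dispose of the case $\mu(x\in F)>0$, push $\mu$ forward to a measure $\mu_0$ on $\cU_0=\cU\setminus F$, and verify $\mu_0$ is dfs over $M_0=M\setminus F$ to invoke dfs-triviality of $T_0$. The only cosmetic difference is that you verify definability of $\mu_0$ directly via a quotient-map argument on type spaces, whereas the paper cites Proposition \ref{prop:dfsopp}$(a)$ for the same purpose (whose proof is itself the quotient-map argument), so the two proofs are essentially identical.
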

\begin{proof}
Fix a finite set $F\subset\cU$ and $\cL(\cU)$-formulas $(\theta_R)_{R\in\cL_0}$ such that if $\cU_0$ is the $\cL_0$-structure $(\cU\backslash F;(\theta_R)_{R\in\cL_0})$, then $\cU_0\models T_0$. We may add constants to $T$ and assume that each $\theta_R$ is over $\emptyset$, and $F$ is definable by an $\cL$-formula $\chi(x)$ over $\emptyset$.

To show that $T$ is \df-trivial, it suffices by Propositions \ref{prop:localization} and \ref{prop:onevbl} to fix some $\mu\in\fM^{\dfs}_1(\cU)$ and show that $\mu(x=b)>0$ for some $b\in\cU$. Toward a contradiction, suppose $\mu(x=b)=0$ for all $b\in\cU$. Let $M\prec\cU$ be such that $\mu$ is definable and finitely satisfiable over $M$. Then $M_0:=M\backslash F\prec_{\cL_0}\cU_0$, and $\cU_0$ is $|M_0|^+$-saturated as an $\cL_0$-structure. 

Now let $\mu_0$ be the restriction of $\mu$ to $\cL_0$-formulas over $\cU_0$. We show that $\mu_0\in \fM^{\dfs}_1(\cU_0)$, which contradicts the assumption that $T_0$ is \df-trivial. In particular, we show $\mu_0$ is definable (with respect to $\cL_0$) and finitely satisfiable over $M_0$. Fix an $\cL_0$-formula $\phi(x,y)$. Suppose $b\in\cU_0^y$ is such that $\mu_0(\phi(x,b))>0$. Note that $\mu(\chi(x))=0$ by assumption, and so $\mu(\phi(x,b)\wedge\neg\chi(x))>0$. So $\phi(x,b)\wedge\chi(x)$ is realized in $M$, i.e., $\phi(x,b)$ is realized in $M_0$. Now fix a closed set $C\seq[0,1]$ and let $X=\{b\in\cU_0^y:\mu_0(\phi(x,b))\in C\}$. Then $X=\{b\in\cU^y:\mu(\phi(x,b))\in C\}\cap (\cU\backslash F)^y$, and so $X$ is $\cL_0$-type-definable over $M$ by Proposition \ref{prop:dfsopp}$(a)$. Since $F$ is $\emptyset$-definable, $X$ is $\cL_0$-type-definable over $M_0$.
\end{proof}

Recall that the \emph{random graph} is the \Fraisse\ limit of the class of finite graphs, and the \emph{random bipartite graph} is the \Fraisse\ limit of the class of finite bipartite graphs. In order to obtain a \Fraisse\ class in the latter case, we work in the language $\cL=\{E,P,Q\}$ where $E$ is the edge relation and $P,Q$ are predicates for the bipartition. 

\begin{theorem}$~$\label{thm:RGmeas}
\begin{enumerate}[$(a)$]
\item The theory of the random graph is \df-trivial. 
\item The theory of the random bipartite graph is \df-trivial.
\end{enumerate}
\end{theorem}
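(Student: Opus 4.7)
The plan is to invoke Propositions \ref{prop:localization} and \ref{prop:onevbl} to reduce both parts to showing that every $\mu\in\fM^{\dfs}_1(\cU)$ has an atom. So I will suppose, toward a contradiction, that $\mu\in\fM^{\dfs}_1(\cU)$ is atomless and witnessed to be \df\ by some $M\prec\cU$.

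For part $(a)$, I set $\alpha(b)=\mu(E(x,b))$ for $b\in\cU$. Finite satisfiability immediately gives $\alpha(b)=0$ for any $b\in\cU$ with no $E$-neighbor in $M$. The crux of the argument is the dual claim: for any finite $F\seq M$, any $b\in\cU\setminus M$ whose $E$-neighborhood in $M$ equals $M\setminus F$ (which exists by the random graph's extension axioms and saturation) must satisfy $\alpha(b)=1$. To see this, I would note that otherwise $\mu(\neg E(x,b))>0$, so by atomlessness $\mu(\neg E(x,b)\wedge\bigwedge_{c\in F}x\neq c)>0$, and then finite satisfiability would produce some $c^*\in M$ with $\neg E(c^*,b)$ and $c^*\notin F$; but $\neg E(c^*,b)$ forces $c^*\in F$, contradicting $c^*\notin F$.

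This dual claim is the main technical step; the rest of $(a)$ is quick. Since $\alpha$ factors through $S_1(M)$ and is continuous there by definability, it is uniformly continuous on the compact Stone space, so there is a finite $M_0\seq M$ with $|\alpha(b)-\alpha(b')|<\frac12$ whenever $\tp(b/M_0)=\tp(b'/M_0)$. Applying the dual claim with $F=M_0$ produces $b_1\in\cU\setminus M$ with $\alpha(b_1)=1$, while $b_2\in\cU\setminus M$ with no $E$-neighbor in $M$ gives $\alpha(b_2)=0$. Yet $b_1$ and $b_2$ share the same type over $M_0$ (both are distinct from and non-adjacent to every element of $M_0$), yielding the absurdity $1<\frac12$.

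For part $(b)$, I may assume $\mu(P(x))>0$ (otherwise $\mu(Q(x))=1$ by symmetry), and localizing at $P(x)$ yields an atomless \df\ measure $\nu$ supported on $P$. I then repeat the argument of $(a)$ with $\nu$ in place of $\mu$ and with parameters $b\in Q$ whose $E$-neighborhood in $M\cap P$ is prescribed via the extension axioms of the random bipartite graph. The one adjustment is that the key formula becomes $\neg E(x,b)\wedge P(x)\wedge\bigwedge_{c\in F}x\neq c$ with $F\seq M\cap P$ finite; the $P(x)$ conjunct is needed to exclude elements of $M\cap Q$, which trivially satisfy $\neg E(x,b)$ for $b\in Q$.
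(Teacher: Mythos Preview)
Your argument is correct. Both parts reduce, as in the paper, via Propositions \ref{prop:localization} and \ref{prop:onevbl}, to showing that any atomless $\mu\in\fM_1^{\dfs}(\cU)$ leads to a contradiction. From there the routes diverge somewhat.

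The paper (arguing part $(b)$ and leaving $(a)$ as the easier exercise) splits into cases. If some $b$ has $\mu(E(x,b))>0$, it invokes Proposition \ref{prop:dfsopp}$(b)$ to produce an explicit $E^*$-formula $\psi(y)=\bigwedge_{m\in A}E(m,y)\wedge\bigwedge_{m\in B}\neg E(m,y)$ over finite $A,B\subseteq M$ witnessing positivity, then uses the extension axioms to find $c\models\psi$ whose $M$-neighborhood is exactly $A$, so that $\mu(E(x,c)\wedge x\notin A)>0$ contradicts finite satisfiability. If instead $\mu(E(x,b))=0$ for all $b$, it passes to the complement bipartite relation and reruns the first case. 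Your approach bypasses this case split by establishing both extremal values directly: $\alpha(b)=0$ when $b$ has empty $M$-neighborhood, and (your dual claim) $\alpha(b)=1$ when $b$ has cofinite $M$-neighborhood. You then use continuity of $F^E_\mu$ on the compact Stone space $S_1(M)$ to obtain a finite $M_0\subseteq M$ controlling $\alpha$ up to $\tfrac12$, and realize both extremes by elements sharing the same quantifier-free type over $M_0$.

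The two arguments are close in spirit---both exploit definability to reduce to finitely many parameters and then invoke the extension axioms---but yours is more symmetric and avoids the complementation trick. The paper's route via Proposition \ref{prop:dfsopp}$(b)$ makes the relevant $E^*$-formula explicit, which ties in with the local viewpoint used elsewhere (e.g.\ Theorem \ref{thm:reduct}); your uniform-continuity phrasing is cleaner for this particular proof but slightly obscures that only $E^*$-formulas are needed to witness the definability.
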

\begin{proof}
We prove part $(b)$. The argument for part $(a)$ is similar (and easier), so we leave it as an exercise. The case of the random graph is also alluded to by Chernikov and Starchenko in \cite[Example 3.8]{ChStNIP}. 

Let $T$ be the theory of the random bipartite graph. By Propositions \ref{prop:localization} and \ref{prop:onevbl}, it suffices to fix $\mu\in\fM^{\dfs}_1(\cU)$ and show that $\mu(x=b)>0$ for some $b\in\cU$. Toward a contradiction, suppose $\mu(x=b)=0$ for all $b\in\cU$. Fix $M\prec\cU$ such that $\mu$ is definable and finitely satisfiable over $M$. 

Suppose first that there is some $b\in\cU$ such that $\mu(E(x,b))>0$. Without loss of generality, assume $b\in Q(\cU)$. By Proposition \ref{prop:dfsopp}$(b)$, there is a $E^*$-formula $\psi(y)$ over $M$ such that $\cU\models\psi(b)$ and $\mu(E(x,c))>0$ for any $c\in\psi(\cU)$. Without loss of generality, we may assume $\psi(y)$ is of the form
\[
\bigwedge_{m\in A}E(m,y)\wedge\bigwedge_{m\in B}\neg E(m,y)
\]
for some finite disjoint $A,B\seq M$. Note that $A\seq P(M)$. By saturation, there is $c\in \cU$ such that $E(m,c)$ holds for all $m\in A$ and $\neg E(m,c)$ holds for all $m\in M\backslash A$. It follows that $\cU\models\psi(c)$, and so $\mu(E(x,c))>0$. Therefore $\mu(E(x,c)\wedge x\not\in A)>0$. By finite satisfiability, there is some $m\in M\backslash A$ such that $\cU\models E(m,c)$. Then $E(m,c)$ holds and $m\in M\backslash A$, which contradicts the choice of $c$.

Now suppose that $\mu(\neg E(x,b))=1$ for all $b\in\cU$. Note that $\mu(P(x)\vee Q(x))=1$ and so, without loss of generality, we may assume $\mu(P(x))>0$. Let $E_0(x,y)$ be the formula $\neg E(x,y)\wedge ((P(x)\wedge Q(y))\vee (P(y)\wedge Q(x)))$. Then $E_0(x,y)$, $P(x)$, and $Q(x)$ define a random bipartite graph on $\cU$. Moreover, if $b\in Q(\cU)$ then $\mu(E_0(x,b))>0$. So we may apply the argument above to obtain a contradiction.
\end{proof}

We now give several examples of theories which are \df-trivial because they have one of the above theories as a reduct. Recall that, given $r\geq 2$, an \emph{$r$-uniform hypergraph} (or \emph{$r$-graph}) is a set of vertices together with an irreflexive, symmetric $r$-ary relation $R$. For any fixed $r\geq 2$, the class of finite $r$-graphs is a \Fraisse\ class, and we let $T^r$ be the theory of the \Fraisse\ limit. Given $s>r$, let $K^r_s$ be the complete $r$-graph on $s$ vertices. Then, for any fixed $s>r$, the class of finite $K^r_s$-free $r$-graphs is a \Fraisse\ class, and we let $T^r_s$ be the theory of the \Fraisse\ limit.

\begin{corollary}\label{cor:PIex}
The following theories are \df-trivial:
\begin{enumerate}[$(1)$]
\item $T^r$ for any $r\geq 2$,
\item $T^r_s$ for any $s>r\geq 3$, 
\item the theory of any pseudofinite field,
\item the theory of the random tournament,
\item the theory $T^*_{\textit{feq}}$ of a generic parameterized equivalence relation, and
\item any completion of \textnormal{ZF}.
\end{enumerate} 
\end{corollary}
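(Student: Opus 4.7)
The plan is to apply Theorem \ref{thm:reduct} together with Theorem \ref{thm:RGmeas} in each case: for every theory $T$ in the list, I would exhibit either the random graph or the random bipartite graph as a reduct of $T$ (after possibly naming finitely many parameters and removing them from the universe). Since both of those theories are \df-trivial, Theorem \ref{thm:reduct} then yields \df-triviality of $T$.

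For items $(1)$ and $(2)$, in the case $r=2$ the theory $T^r$ is literally the random graph, so there is nothing to prove. For $r\geq 3$, I would fix distinct vertices $a_1,\ldots,a_{r-2}$ in $\cU$, set $F=\{a_1,\ldots,a_{r-2}\}$, and define $E(x,y)$ to be the $\cL(\cU)$-formula $R(x,y,a_1,\ldots,a_{r-2})\wedge x\neq y$. To see that $(\cU\setminus F;E)$ is the random graph, I would verify the extension axioms using the \Fraisse\ property of $T^r$ (respectively $T^r_s$). For $T^r_s$, the key point is that adjoining a vertex $v$ with a prescribed $E$-pattern to a finite substructure containing $F$ cannot produce a $K^r_s$: any such $K^r_s$ would force every $(r-1)$-subset of its other $s-1$ vertices to contain $\{a_1,\ldots,a_{r-2}\}$, but since $s>r$ the intersection of all $(r-1)$-subsets of an $(s-1)$-set is empty when $r\geq 3$, giving a contradiction. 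The extension is thus $K^r_s$-free and realizable by saturation.

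For item $(3)$, in a pseudofinite field of characteristic $\neq 2$ one can take $E(x,y):=\exists z(x-y=z^2)\wedge x\neq y$ (adjusting slightly for small characteristic), and use Weil-type character-sum estimates together with pseudofiniteness to obtain all extension axioms for the random graph as a reduct. For item $(4)$, pick a vertex $a\in\cU$, set $F=\{a\}$, and define predicates $P(x):=(a\to x)$, $Q(x):=(x\to a)$, together with $E_0(x,y):=(P(x)\wedge Q(y)\wedge x\to y)\vee(P(y)\wedge Q(x)\wedge y\to x)$; the \Fraisse\ property of the random tournament gives all bipartite extension axioms, so $(\cU\setminus F;E_0,P,Q)$ is the random bipartite graph. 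For item $(5)$, in $T^*_{\feq}$ with sorts $O$ (objects) and $P$ (parameters), fix a generic $a\in O$ and define $E_0(x,y):=E(x,a;y)$ on $O\setminus\{a\}$ versus $P$; the \Fraisse\ extension property yields the random bipartite graph as a reduct. For item $(6)$, in a sufficiently saturated model of any completion of ZF one uses the membership relation together with standard set-theoretic coding (e.g., forming sets of finite sequences that witness prescribed membership patterns) to realize the random graph axioms as a reduct.

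The main obstacle across these cases is verifying the extension axioms of the target (bipartite) random graph, because it requires a short combinatorial/structural argument tailored to each ambient theory. Items $(1)$, $(2)$, $(4)$, and $(5)$ are essentially \Fraisse\ arguments, and the only nontrivial calculation is the $K^r_s$-free check for $T^r_s$ outlined above. Items $(3)$ and $(6)$ are more delicate, relying respectively on well-known analytic number-theoretic estimates in pseudofinite fields and on standard saturation-plus-coding arguments in ZF, and could if necessary be handled by citing existing literature that identifies the random graph as a reduct of these theories.
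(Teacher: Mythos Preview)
Your overall strategy---reduce each theory to the random graph or the random bipartite graph and then invoke Theorems \ref{thm:RGmeas} and \ref{thm:reduct}---is exactly what the paper does, and your constructions for items $(1)$, $(2)$, $(4)$, and $(5)$ match the paper's essentially verbatim. Your $K^r_s$-freeness argument in $(2)$ is in fact more explicit than the paper's one-line remark.

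Two items deserve comment. For $(3)$, your formula $\exists z(x-y=z^2)$ is not symmetric unless $-1$ is a square in the field, and this is not a ``small characteristic'' issue (it fails for any $q\equiv 3\pmod 4$); patching this and then verifying the extension axioms via character sums is doable but is real work you have not done. The paper sidesteps this entirely by citing Duret's result that $\exists z(x+y=z^p)\wedge x\neq y$, for any prime $p$ different from the characteristic, defines a random graph---the symmetry is then automatic. For $(6)$, ``standard set-theoretic coding'' is too vague to count as a proof. The paper gives a sharp one-line construction: take $E(x,y):=(x\in y\vee y\in x)$, and for disjoint finite $A,B$ set $c=A\cup\{B\}$; then $a\in c$ for $a\in A$, while foundation rules out both $b\in c$ and $c\in b$ for $b\in B$. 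You should replace your sketch for $(3)$ and $(6)$ with these concrete arguments (or precise citations).
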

\begin{proof}
$(1)$ We show that $T^2$ is a reduct of $T^r$. Let $F\subset\cU$ be a set of size $r-2$. Let $E(x,y)$ be $R(x,y,\cbar)$ where $\cbar$ enumerates $F$. Suppose $A,B\subset\cU\backslash F$ are finite and disjoint. Define a one-point extension $H=ABF\cup \{e\}$ of the induced hypergraph on $ABF$ by adding only the edges $R(a,e,\cbar)$ for all $a\in A$. Then $H$ is an $r$-graph, and so we may assume $H$ embeds in $\cU$ over $ABF$. Now $E(a,e)$ holds for all $a\in A$ and $\neg E(b,e)$ holds for all $b\in B$.

$(2)$ Note that if $r\geq 3$ and $\cU\models T^r_s$ then the graph $H$ constructed in $(1)$ is $K^r_s$-free. So the same argument works to show that $T^2$ is a reduct of $T^r_s$.

$(3)$ Let $T$ be the theory of a pseudofinite field $K$, and let $p$ be a prime different from the characteristic of $K$. By a result of Duret \cite{Duret}, $T^2$ is a reduct of $T$ via the formula $E(x,y):= \exists z(x+y=z^p)\wedge x\neq y$. 

$(4)$ Recall that a tournament is a directed graph in which every pair of vertices is joined by exactly one directed edge. Let $T$ be the theory of the random tournament (i.e., the \Fraisse\ limit of finite tournaments), and let $\cU\models T$. We show that the theory of the random bipartite graph is a reduct of $T$.  Let $R$ be the directed edge relation, and fix some $a\in\cU$. Let $P=\{b\in\cU:R(a,b)\}$ and let $Q=\{b\in\cU:R(b,a)\}$. Note that $P$ and $Q$ partition $\cU\backslash\{a\}$. Define a bipartite graph relation $E\seq P\times Q$ where $E(b,c)$ holds if and only if $R(b,c)$. Then $(P,Q;E)$ satisfies the axioms of the random bipartite graph. 

$(5)$ We show that the theory of the random bipartite graph is a reduct of $T^*_{\textit{feq}}$ (see Section \ref{sec:Tfeq2} for the definition of this theory). Let $E_z(x,y)$ be the parameterized equivalence relation, where $x,y$ are in the object sort $O$ and $z$ is in the parameter sort $P$. Fix some $a\in O(\cU)$, and let $P_0=O(\cU)\backslash\{a\}$ and $Q_0=P(\cU)$. Define a bipartite graph relation $E_0\seq P_0\times Q_0$ where $E_0(b,c)$ holds if and only if $E_c(a,b)$. Then $(P_0,Q_0;E_0)$ satisfies the axioms of the random bipartite graph. 

$(6)$ Let $T$ be a completion of ZF, and let $\cU\models T$. We show that $T^2$ is a reduct of $T$ via the formula $E(x,y):=(x\in y\vee y\in x)$.\footnote{This was observed by James Hanson.} Fix finite disjoint $A,B\subset\cU$. Define $c=A\cup\{B\}$, which is an element of $\cU$. Then $E(a,c)$ holds for all $a\in A$ and, by the axiom of foundation, we have $\neg E(b,c)$ for all $b\in B$. 
\end{proof}

Noticeably absent from the previous corollary is $T^2_s$ for $s\geq 3$. We will see in Section \ref{sec:Kt} that these theories are \emph{not} \df-trivial.

\section{Examples}\label{sec:ex}

\subsection{Parameterized equivalence relations}\label{sec:Tfeq2} The purpose of this section is to develop an example of Adler, Casanovas, and Pillay \cite{ACPgs}. Let $\cL$ be a language with two sorts $O$ and $P$ (for ``objects" and ``parameters") and a ternary relation $E_z(x,y)$ on $O\times O\times P$ (with $x,y$ of sort $O$ and $z$ of sort $P$). Let $T_{\feq2}$ be the incomplete theory asserting that for any $z$ in $P$, $E_z(x,y)$ is an equivalence relation on $O$ in which each class has size $2$. Then $T_{\feq2}$ has a model completion, which we denote $T^*_{\textit{feq}2}$. This theory was defined in  \cite[Example 1.7]{ACPgs}, and can also be constructed as the \emph{generic variation} of the theory $T^*_ {\textit{eq}2}$ of an equivalence relation with infinitely many classes of size $2$. Generic variations were defined by Baudisch in \cite{BauGV}, although we have used an equivalent two-sorted version as in \cite[Section 6.2]{ChRa}. 

Note that $T^*_{\textit{eq}2}$ has quantifier elimination and eliminates $\exists^\infty$ (in fact, this theory is complete and strongly minimal). It follows that $T^*_{\feq2}$ is complete, model complete, and eliminates $\exists^\infty$ (see \cite[Corollary 2.10, Theorem 3.1]{BauGV}).  However, $T^*_{\feq2}$ does not eliminate quantifiers unless one adds a binary function $f\colon P\times O\to O$ such that, for any $z\in P$, $f_z(-)\colon O\to O$ swaps the the two elements in each $E_z$-class (more precisely, $f_z(x)=y$ if and only if $E_z(x,y)\wedge x\neq y$). 

The classification of $T^*_{\feq2}$ using dividing lines is the same as its counterpart $T^*_{\feq}$. Recall that $T^*_{\feq}$ is the generic variation of the theory $T^*_{\textit{eq}}$ of an equivalence relation with infinitely many infinite classes (or, alternatively, the model companion of the theory $T_{\feq}$ of a parameterized equivalence relation). 

\begin{theorem}
$T^*_{\textit{feq}2}$ is $\NSOP_1$ but not simple.
\end{theorem}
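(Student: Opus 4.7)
The plan is to prove the two assertions independently, exploiting that $T^*_{\feq2}$ is, by construction, the generic variation of the strongly minimal theory $T^*_{\textit{eq}2}$.

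For $\NSOP_1$, the approach is to invoke the preservation theorem of Chernikov and Ramsey from \cite{ChRa}, which states that the generic variation of an $\NSOP_1$ theory is again $\NSOP_1$. Since $T^*_{\textit{eq}2}$ is strongly minimal (hence stable, hence $\NSOP_1$), this immediately yields $\NSOP_1$ for $T^*_{\feq2}$.

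For non-simplicity, since $\NSOP_1$ implies $\NTP_1$, it suffices to exhibit some formula with the tree property $\TP_2$. The strategy is to build a two-dimensional array of parameters $(y_{ij}, z_{ij})_{i,j<\omega}$ inside a model of the pre-completion $T_{\feq2}$ (where equivalence classes may have size $1$, giving the flexibility to adjoin partners freely), arrange that rows are $k$-inconsistent and every path is consistent, then embed the configuration into a model of $T^*_{\feq2}$ via model-completeness and invoke compactness.

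The main obstacle is the rigidity imposed by the size-$2$ constraint on $E_z$-classes. A naive attempt with $\phi(x;y,z) := E_z(x,y)$ and $2$-inconsistency in rows---the workhorse of the non-simplicity argument in $T^*_{\feq}$---runs into a cardinality barrier: row disjointness forces distinct paths to have distinct witnesses in $\cU$, but each $E_z$-class has only two elements, overflowing the available space already for short finite fragments. Circumventing this will require either relaxing to $k$-inconsistency for some larger $k$, or moving to a compound formula whose solution set is not bounded by a single equivalence class (for instance, one using tuples of object variables, such as $\phi(x_1,x_2;y,z) := E_z(x_1,y)\wedge E_z(x_2,y)\wedge x_1\neq x_2$ isolating an $E_z$-class as an ordered pair, or a formula using several parameters from $P$ simultaneously). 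The technical heart of the proof is choosing the formula and $k$ appropriately and verifying that the resulting $\TP_2$-array can be built by a \Fraisse-style amalgamation in $T_{\feq2}$ while maintaining both row $k$-inconsistency and path consistency in the completion $T^*_{\feq2}$.
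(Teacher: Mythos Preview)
Your plan for $\NSOP_1$ has a real gap. The preservation result in \cite[Section 6.2]{ChRa} is stated for \Fraisse\ limits in finite relational languages \emph{with strong amalgamation}, and the \Fraisse\ class underlying $T^*_{\textit{eq}2}$ does not have strong amalgamation (two distinct one-point classes cannot be freely amalgamated over a common point without identification). So the Chernikov--Ramsey theorem, as published, does not apply here. The paper instead appeals to an unpublished result of Ramsey that $\NSOP_1$ is preserved by Baudisch's generic variation in full generality (with Dobrowolski's $\NTP_1$ preservation as a published analogue). You would need either to cite that, or to give a direct $\NSOP_1$ argument.

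For $\TP_2$, your diagnosis of the obstacle is accurate, but your proposed workarounds are headed in the wrong direction and at least the concrete one you wrote down fails: with $\phi(x_1,x_2;y,z):=E_z(x_1,y)\wedge E_z(x_2,y)\wedge x_1\neq x_2$, any instance forces $\{x_1,x_2\}$ to equal the $E_z$-class of $y$, so $y\in\{x_1,x_2\}$; along a path this would require infinitely many distinct $b_{i,\sigma(i)}$ to lie in a two-element set. The paper's move is much simpler and avoids all of this: put the single variable in the \emph{parameter} sort $P$ rather than the object sort. With $\phi(x;y_1,y_2):=E_x(y_1,y_2)$ and an array of pairwise distinct objects $b_{i,j},c_i\in O(\cU)$, row $2$-inconsistency is immediate (three distinct objects cannot share an $E_x$-class of size two), while path consistency holds because for any $\sigma$ one can realize a parameter $a\in P(\cU)$ making each $\{b_{i,\sigma(i)},c_i\}$ an $E_a$-class. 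The freedom you were looking for lives on the $P$ side, not the $O$ side.
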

\begin{proof}
To show that $T^*_{\textit{feq}2}$ is not simple we will in fact witness $\TP_2$ (recall the result of Shelah that any $\NSOP_1$ non-simple theory must have $\TP_2$ and, as is often the case in such situations, a direct demonstration of $\TP_2$ is cleaner).

Let $\{b_{i,j},c_i:i,j<\omega\}\seq O(\cU)$ be a collection of pairwise distinct objects. Given $i<\omega$ and $j<k<\omega$, the formula $E_x(b_{i,j},c_i)\wedge E_x(b_{i,k},c_i)$ is inconsistent since all $E_x$-classes have size $2$. On the other hand, for any function $\sigma\colon\omega\to\omega$, the type $\{E_x(b_{i,\sigma(i)},c_i):i<\omega\}$ is consistent since we can find a parameter $a\in P(\cU)$ such that $\{b_{i,\sigma(i)},c_i\}$ is an $E_a$-class for all $i<\omega$. Altogether, we have shown that the formula $\phi(x;y_1,y_2):=E_x(y_1,y_2)$ has $\TP_2$.

Finally, the fact that $T^*_{\feq2}$ is $\NSOP_1$ follows from an unpublished result of Ramsey that $\NSOP_1$ is preserved by the generic variation construction from \cite{BauGV}  (the analogous statement for $\NTP_1$ was shown by Dobrowolski in \cite{DobGV}). 
\end{proof}

\begin{remark}
In \cite[Section 6.2]{ChRa}, Chernikov and Ramsey consider the theories of \Fraisse\ limits in finite relational languages with strong amalgamation, and they show that generic variation preserves $\NSOP_1$ in such cases. Note however that, while $T^*_{\eq2}$ is the theory of a \Fraisse\ limit in a finite relational language, the associated \Fraisse\ class does not have strong amalgamation. 
\end{remark} 

Despite the similarities between the definitions of $T^*_{\feq 2}$ and $T^*_{\feq}$, and the dividing lines they satisfy, the behavior of generically stable types is different (recall that $T^*_{\feq}$ is \df-trivial by Corollary \ref{cor:PIex}). 

\begin{remark}\label{rem:fincof}
If $\cU\models T^*_{\feq2}$ then any definable subset of $O(\cU)$ is finite or cofinite (this is easily checked using quantifier elimination in the language with $f$ named).
\end{remark}

\begin{fact}[Adler, Casanovas, \& Pillay \cite{ACPgs}]\label{fact:Tfeq2}
If $\cU\models T^*_{\feq2}$ then there is a generically stable type $p\in S_1(\cU)$ such that $p\otimes p$ is not generically stable.
\end{fact}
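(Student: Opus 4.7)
The plan is to take $p$ to be the unique non-realized global type concentrated on the object sort $O$; this exists and is unique by Remark \ref{rem:fincof}, since every $\cL(\cU)$-formula in one object variable defines a finite or cofinite subset of $O(\cU)$. Explicitly, $p$ contains $x \neq a$ and $\neg E_b(x,a)$ for every $a \in O(\cU)$ and every $b \in P(\cU)$, and avoids every $\cL(\cU)$-formula defining a finite set of objects. As the unique non-realized type of its sort, $p$ is $\emptyset$-invariant.

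To verify that $p$ is generically stable, I would apply Proposition \ref{prop:PTgs}(ii). Fix $M \prec \cU$ and a Morley sequence $(c_j)_{j<\omega}$ of $p$ over $M$. For any $\cL(\cU)$-formula $\phi(x) \in p$, the set $\neg\phi(\cU)$ is a finite subset of $O(\cU)$, and since the $c_j$ are pairwise distinct, only finitely many of them can lie in it; hence $\phi \in \Av(c_j)_{j<\omega}$. The analogous argument applied to $\neg\phi \in p$ gives the converse, so $\Av(c_j)_{j<\omega} = p$ and $p$ is generically stable over $M$.

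To show that $p \otimes p$ is not generically stable, let $(c_j)_{j<\omega}$ be a Morley sequence of $p$ over $M$ and set $b_i = c_{2i}$ and $a_i = c_{2i+1}$; a routine check against Definition \ref{def:prod} confirms that $((a_i, b_i))_{i<\omega}$ is then a Morley sequence of $p \otimes p$ over $M$. By the one-point extension axioms of $T^*_{\feq 2}$, any finite partial matching of distinct objects extends to a perfect matching of $O(\cU)$ realized as the class structure of some $E_z$; hence the partial type $\{E_z(a_i, b_i) : i < \omega\}$ in the variable $z$ is finitely consistent, and by saturation there is $z_0 \in P(\cU)$ with $E_{z_0}(a_i, b_i)$ for every $i$. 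Then $E_{z_0}(x,y) \in \Av((a_i,b_i))_{i<\omega}$. On the other hand, for any $\beta \in O(\cU)$ the formula $E_{z_0}(x,\beta)$ defines the two-element $E_{z_0}$-class of $\beta$, so $\neg E_{z_0}(x,\beta) \in p$; hence $\neg E_{z_0}(x,y) \in p \otimes p$. This disagreement with the average type shows, via Proposition \ref{prop:PTgs}, that $p \otimes p$ is not generically stable.

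The main technical step is justifying the realizability of $z_0$, which requires extracting from the model-companion axioms of $T^*_{\feq 2}$ the fact that every finite configuration consisting of distinct objects together with a partial matching among them embeds into $\cU$ as the restriction of $E_z$ for some parameter $z$; saturation of $\cU$ then supplies the infinite version needed.
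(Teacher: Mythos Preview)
Your proposal is correct and follows the same approach as the paper: you identify the same type $p$ (the unique non-realized type on the object sort, determined by Remark~\ref{rem:fincof}), and your argument that $p\otimes p$ is not generically stable---finding a parameter $z_0$ matching up the pairs in a Morley sequence so that $E_{z_0}(x,y)$ lies in the average type but not in $p\otimes p$---is exactly the argument of \cite{ACPgs}, which the paper simply cites rather than reproduces. Your verification that interleaving a Morley sequence in $p$ yields a Morley sequence in $p\otimes p$ is a helpful detail the paper omits.
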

\begin{proof}
See \cite[Example 1.7]{ACPgs}. The type $p$ is the unique type in $S_1(\cU)$ that contains $O(x)$ and $\neg E_c(x,b)$ for all $b\in O(\cU)$ and $c\in P(\cU)$. By Remark \ref{rem:fincof}, it is clear that $p$ is generically stable, and it is shown in \cite{ACPgs} that $p\otimes p$ is not generically stable.
\end{proof}

Since the type $p$ in the previous fact is not realized in $\cU$, we conclude that $T^*_{\feq2}$ is not \df-trivial. We also obtain a separation between generic stability and finite approximation for types (recall that these notions are equivalent in $\NIP$ theories).

 \begin{corollary}
 If $\cU\models T^*_{\feq2}$ then there is a type $q\in S_2(\cU)$ that is finitely approximated but not generically stable. 
 \end{corollary}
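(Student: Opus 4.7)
The plan is to take $q = p \otimes p$, where $p \in S_1(\cU)$ is the generically stable type furnished by Fact \ref{fact:Tfeq2}, and to verify the two required properties. That $q$ is not generically stable is already asserted in Fact \ref{fact:Tfeq2}, so the entire task reduces to showing that $q$ is finitely approximated.

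First, I would upgrade the generic stability of $p$ to finite approximation. Since $p$ is generically stable over some $M \prec \cU$, Proposition \ref{prop:PTgs} (the equivalence $(i) \Leftrightarrow (iii)$) shows that $p$ is a frequency interpretation measure over $M$. As noted immediately after Definition \ref{def:fim}, condition $(i)$ of that definition is nothing more than finite approximation in $M$, so $p \in \fM^{\fa}_1(\cU)$, with witness model $M$.

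Next, I would invoke Proposition \ref{prop:famprod}$(b)$ with $\mu = \nu = p$ to conclude that $p \otimes p$ is finitely approximated in $M$. Since the Morley product of a definable type with itself lands in $\fM_{x_1 x_2}(\cU)$ and assigns measure $0$ or $1$ to every formula, it defines a type $q \in S_2(\cU)$. Combining this with Fact \ref{fact:Tfeq2}, $q$ is finitely approximated but not generically stable.

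There is no real obstacle here beyond assembling the pieces already established; the content of the corollary lies in the earlier work, namely the existence of a generically stable $p$ with $p \otimes p$ failing to be generically stable, together with the fact that finite approximation (unlike generic stability, by Proposition \ref{prop:PTgs}) is closed under Morley products via Proposition \ref{prop:famprod}$(b)$.
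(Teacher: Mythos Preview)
Your proposal is correct and follows exactly the same route as the paper: take $q = p \otimes p$, use Proposition \ref{prop:PTgs} to pass from generic stability of $p$ to $p \in \fM_1^{\fim}(\cU) \subseteq \fM_1^{\fa}(\cU)$, and then apply Proposition \ref{prop:famprod}$(b)$ to conclude $q \in \fM_2^{\fa}(\cU)$. The paper's proof is slightly terser but the argument is identical.
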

 \begin{proof}
Let $p\in S_1(\cU)$ be the type from Fact \ref{fact:Tfeq2}. Then $p\in \fM_1^{\fim}(\cU)$ by Proposition \ref{prop:PTgs}. So $q:=p\otimes p\in \fM_2^{\fa}(\cU)$, since $\fM_1^{\fim}(\cU)\seq\fM_1^{\fa}(\cU)$ and finitely approximated measures are closed under Morley products (see Proposition \ref{prop:famprod}$(b)$).
 \end{proof}

\subsection{$K_s$-free graphs}\label{sec:Kt}

Fix $s\geq 3$ and let $K_s$ be the complete graph on $s$ vertices. 
Given a finite graph $G$, let $\alpha_s(G)$ denote the size of the largest subset of $G$ which induces a $K_{s-1}$-free subgraph. Let $R_s(n)$ be the smallest integer $N$ such that any graph $G$ of size $N$ either contains $K_s$ or satisfies $\alpha_s(G)\geq n$.

\begin{theorem}[Erd\H{o}s \& Rogers 1962 \cite{ErRoR3t}]\label{thm:ER3t}
$R_s(n)\geq \Omega(n^{1+c_s})$ for some $c_s>0$. Thus there are  $K_s$-free graphs $(G_i)_{i=0}^\infty$ such that $|G_i|\to\infty$ and $\alpha_s(G_i)= o(|G_i|)$.
\end{theorem}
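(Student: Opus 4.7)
The plan is to use the probabilistic method, following the classical Erd\H{o}s--Rogers approach. I would take $G \sim G(N,p)$ with edge density $p = N^{-\alpha}$ for a suitable exponent $\alpha = \alpha(s) > 0$, and aim to show that with positive probability $G$ simultaneously satisfies: \textbf{(1)} $G$ contains at most $N/2$ copies of $K_s$, so that after deleting one vertex from each such copy we obtain a $K_s$-free graph $G'$ on at least $N/2$ vertices; and \textbf{(2)} every $n$-element subset of $V(G)$ contains a copy of $K_{s-1}$, a property inherited by $G'$ (after the small number of deletions). Together, (1) and (2) force $\alpha_s(G') < n$, witnessing $R_s(n) > N/2$.

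First I would verify (1) by a direct expectation computation: the expected number of $K_s$-copies in $G$ is $\binom{N}{s} p^{\binom{s}{2}}$, and we choose $\alpha$ large enough so that this is at most $N/4$, after which Markov's inequality gives (1) with probability at least $1/2$. For (2), fix $S \in \binom{V(G)}{n}$ and let $Y_S$ count $K_{s-1}$-subgraphs in $G[S]$, so $Y_S = 0$ precisely when $G[S]$ is $K_{s-1}$-free. Janson's inequality bounds $P(Y_S = 0) \leq \exp(-\mu^2/(2\Delta))$, where $\mu = \binom{n}{s-1} p^{\binom{s-1}{2}}$ and $\Delta$ is the standard Janson correlation sum over pairs of potential $K_{s-1}$'s in $S$ that share at least one edge. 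A union bound over the $\binom{N}{n}$ choices of $S$ then reduces (2) to requiring $\binom{N}{n} \exp(-\mu^2/(2\Delta)) < 1/2$.

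The step I expect to be the main obstacle is balancing the two constraints: $\alpha$ must be large enough to kill most $K_s$-copies but small enough to keep $\mu^2/\Delta$ growing faster than $n \log(N/n)$. Writing everything as powers of $N$ and $n$, taking logarithms, and solving for the admissible range of $\alpha$ gives a quantitative lower bound $N = \Omega(n^{1+c_s})$ for an explicit $c_s = c_s(s) > 0$. This is the heart of the calculation, but it becomes a routine exponent-chase once one writes down the contribution of each overlap size to $\Delta$ (with the maximal-overlap term typically dominating).

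Finally, the ``Thus'' clause is immediate from the bound on $R_s$: for each sufficiently large $n$, the argument above produces a $K_s$-free graph $G_n$ with $|G_n| \geq \lceil n^{1+c_s}/2 \rceil$ and $\alpha_s(G_n) < n$. Choosing such a sequence and letting $n \to \infty$ yields $|G_n| \to \infty$ and $\alpha_s(G_n) < n = o(n^{1+c_s}) = o(|G_n|)$, as required.
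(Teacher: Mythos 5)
The statement is a cited classical result (Erd\H{o}s and Rogers, 1962); the paper does not prove it. Your argument is correct in outline and gives the required bound, but it follows a different route from the original reference. Erd\H{o}s and Rogers proved this by an explicit geometric construction: take a large collection of points on a high-dimensional sphere, join two points when they are angularly close, and use volume estimates for spherical caps to show both that the resulting graph is $K_s$-free and that large vertex subsets must contain a $K_{s-1}$. Your proposal is the now-standard probabilistic re-proof via the deletion method in $G(N,p)$: first-moment bound and Markov to kill most $K_s$'s, Janson's inequality plus a union bound over $\binom{N}{n}$ sets to control $\alpha_s$, then delete a vertex from each surviving $K_s$. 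Both give the same qualitative $R_s(n) \geq \Omega(n^{1+c_s})$, which is all the paper uses; the probabilistic approach is arguably more flexible and is the one that later quantitative improvements (Ajtai--Koml\'os--Szemer\'edi, Kim for $s=3$) build on.

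Two small remarks on your sketch. First, your parenthetical ``a property inherited by $G'$ (after the small number of deletions)'' is unnecessarily hedged: the property that every $n$-element subset of $V(G)$ induces a $K_{s-1}$ is automatically inherited by \emph{every} induced subgraph $G'\subseteq G$, since $G'[S]=G[S]$ for $S\subseteq V(G')$; the number of deletions plays no role here, only in guaranteeing $|V(G')|\geq N/2$. Second, the bound $P(Y_S=0)\leq \exp(-\mu^2/(2\Delta))$ is the extended Janson inequality and is only valid in the regime $\Delta\geq\mu$; in the complementary regime one uses the basic Janson bound $\exp(-\mu+\Delta/2)\leq \exp(-\mu/2)$. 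Your ``balancing'' step needs to account for both cases, though for a suitable choice of $p=N^{-\alpha}$ and $N=n^{1+c_s}$ the bookkeeping goes through either way. Neither point is a gap, merely loose phrasing to tighten in a full write-up.
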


 \begin{remark}
For $s=3$, Theorem \ref{thm:ER3t} was first proved  by Erd\H{o}s \cite{ErdR3t} in 1957, and it was eventually shown that $R_3(n)=\Theta(\frac{n^2}{\log n})$ (see \cite{AKSR3t} and \cite{KimR3t}).  
 \end{remark}
 
 We use $\cL=\{E\}$ for the language of graphs, and let $\cU\models T^2_s$ be sufficiently saturated. By quantifier elimination for $T^2_s$, there is a unique type in $S_1(\cU)$ containing $\neg E(x,b)$ for all $b\in\cU$. We let $p_E$ denote this type. Note that $p_E$ is definable over $\emptyset$ and not realized in $\cU$.

\begin{theorem}\label{thm:famnotfim}
The type $p_E$ is finitely approximated, but is not generically stable.
\end{theorem}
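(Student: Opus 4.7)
The plan is to handle the two claims separately.

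To show $p_E$ is not generically stable, I will use the characterization of Proposition~\ref{prop:PTgs}$(i)\!\Leftrightarrow\!(ii)$ and exhibit a Morley sequence $(a_i)_{i<\omega}$ in $p_E$ over some $M\prec\cU$ for which $\Av(a_i)_{i<\omega}\neq p_E$. Fix such a Morley sequence. By the definition of $p_E$, each $a_i$ is graph-disconnected from $Ma_{<i}$, so the set $\{a_i:i<\omega\}$ is an independent set in $\cU$; in particular every finite subset is $K_{s-1}$-free (since $s\geq 3$). Combining the extension property of the generic $K_s$-free graph with saturation of $\cU$, the partial type $\{E(x,a_i):i<\omega\}$ is realized by some $b\in\cU$. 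Then $E(x,b)\in\Av(a_i)_{i<\omega}$ while $\neg E(x,b)\in p_E$, so Proposition~\ref{prop:PTgs} forces $p_E$ to fail generic stability.

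For finite approximation, fix a model $M\prec\cU$, an $\cL$-formula $\phi(x;y)$ with $|y|=k$, and $\epsilon>0$. The first step is to observe, by quantifier elimination in $T^2_s$, that $\phi(x;y)$ is a Boolean combination of atoms of the three forms $x=y_j$, $E(x,y_j)$, and atoms not involving $x$. The key reduction is then: for any $b\in\cU^y$ and any $a\in\cU$ with $a\neq b_j$ and $\neg E(a,b_j)$ for every $j\leq k$, the truth value $\phi(a;b)$ coincides with $p_E(\phi(x;b))$, since all $x$-involving atoms evaluate identically to their values under $p_E$. Hence it suffices to produce $\bar a=(a_1,\ldots,a_n)\in M^n$ such that, uniformly over $b\in\cU^y$, at most $\epsilon n$ of the $a_i$ are equal to, or adjacent to, some coordinate of $b$.

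To construct such a tuple, I invoke Theorem~\ref{thm:ER3t} to fix a $K_s$-free graph $G$ on $n$ vertices with $\alpha_s(G)<\frac{\epsilon}{2k}n$, choosing $n$ large enough that $k/n<\frac{\epsilon}{2}$. Since $M\models T^2_s$ as an elementary substructure, this finite $K_s$-free graph embeds in $M$; I take $\bar a$ to enumerate the image. For any $b\in\cU^y$ and each $j\leq k$, the neighborhood of $b_j$ inside $\bar a$ must be $K_{s-1}$-free (otherwise adjoining $b_j$ would realize a $K_s$ in $\cU$), so it has size at most $\alpha_s(G)<\frac{\epsilon}{2k}n$. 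Combined with the at most $k$ coincidences $a_i=b_j$, this yields fewer than $\epsilon n$ non-generic $a_i$, whence $|\Av_{\bar a}(\phi(x;b))-p_E(\phi(x;b))|\leq\epsilon$ uniformly in $b$.

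The principal external input is the Erd\H{o}s--Rogers bound of Theorem~\ref{thm:ER3t}; given that, the remainder is a direct marriage of quantifier elimination in $T^2_s$ with a routine counting bookkeeping. The delicate point is exactly the uniformity in $b$: the bound $\alpha_s(G)=o(|G|)$ is what forces the neighborhood of every external vertex inside $\bar a$ to be simultaneously small, irrespective of which parameter tuple $b$ is chosen.
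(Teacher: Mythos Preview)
Your proposal is correct and follows essentially the same approach as the paper: for non-generic-stability you exhibit a Morley sequence of independent vertices and a parameter $b$ adjacent to (co)finitely many of them, and for finite approximation you combine quantifier elimination with the Erd\H{o}s--Rogers graphs of Theorem~\ref{thm:ER3t}, bounding the neighborhood of each $b_j$ inside $\bar a$ via $\alpha_s(G)$. Your ``generic element'' observation---that any $a$ disjoint and non-adjacent to each $b_j$ satisfies $\phi(a;b)\leftrightarrow p_E(\phi(x;b))$---is a slightly cleaner packaging than the paper's explicit disjunctive normal form and choice of a witnessing disjunct $t_*$, but the underlying argument and bookkeeping are the same.
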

\begin{proof}
Let $(a_i)_{i<\omega}$ be a Morley sequence in $p_E$ over some small model. Then $a_i\neq a_j\wedge\neg E(a_i,a_j)$ holds for all $i \neq j$, and so there is $b\in \cU$ such that $E(a_i,b)$ holds if and only if $i$ is even. So $p_E$ is not generically stable.

Now we show that $p_E$ is finitely approximated in the unique countable model $M$ of $T^2_s$.
Let $\phi(x;\ybar)$ be a formula in the language of graphs, with $\ybar=(y_1,\ldots,y_m)$. Without loss of generality, we may assume that some instance of $\phi(x;\ybar$) is in $p_E$ (otherwise every instance of $\neg\phi(x;\ybar)$ is in $p_E$, and so we may apply the argument below to $\neg\phi(x;\ybar)$). 

By quantifier elimination, we may fix a quantifier-free formula $\psi(\ybar)$, an integer $N\geq 1$, and $A_t,B_t,C_t,D_t\seq [m]$, for $t\in[N]$, such that
\[
\phi(x;\ybar)\equiv\bigvee_{t=1}^N\left(\bigwedge_{i\in A_t}\neg E(x,y_i)\wedge\bigwedge_{i\in B_t}x\neq y_i\wedge \bigwedge_{i\in C_t}E(x,y_i)\wedge \bigwedge_{i\in D_t}x=y_i\right)\wedge\psi(\ybar).
\]
Since $p_E$ contains an instance of $\phi(x;\ybar)$, it follows that there is some $t_*\in[N]$ such that $C_{t_*}=\emptyset=D_{t_*}$, and so $\phi(x;\bbar)\in p_E$ for any $\bbar\in \cU^m$ such that $\cU\models\psi(\bbar)$.  Fix $\epsilon>0$. We want to find $n\geq 1$ and $\abar=(a_1,\ldots,a_n)\in M^n$ such that, for any $\bbar\in \cU^m$, 
\begin{equation*}
\left|p(\phi(x;\bbar))-\Av_{\abar}(\phi(x;\bbar))\right|<\epsilon.\tag{$\dagger$}
\end{equation*}

Let $|A_{t_*}|=k$ and $|B_{t_*}|=\ell$. By Theorem \ref{thm:ER3t}, we may choose $n>\frac{2\ell}{\epsilon}$ and $G=\{a_1,\ldots,a_n\}\subset M$ such that $\alpha_s(G)<\frac{\epsilon}{2k}n$. Fix $\bbar\in \cU^m$. If $\cU\models\neg\psi(\bbar)$ then $\phi(x;\bbar)\not\in p$ and $\cU\models\neg\phi(a_i;\bbar)$ for all $i\in[n]$, so $(\dagger)$ holds trivially. So we can assume $\cU\models\psi(\bbar)$, which implies that $\phi(x;\bbar)\in p$. 

For $j\in A_{t_*}$, set $X_j=\{i\in[n]:E(a_i,b_j)\}$, and note that $\{a_i:i\in X_j\}$ induces a $K_{s-1}$-free subgraph of $G$ (since $\cU$ is $K_s$-free). In particular $|X_j|<\frac{\epsilon}{2k}n$ for all $j\in A_{t_*}$. Define the sets $Y=\{i\in[n]:a_i=b_j\text{ for some }j\in B_{t_*}\}$ and $Z=\{i\in[n]:\neg\phi(a_i;\bbar)\}$. Then we have $Z\seq Y\cup \bigcup_{j\in A_{t_*}}X_j$, which implies
\[
|Z|\leq |Y|+\sum_{j\in A_{t_*}}|X_j|< \ell+\textstyle\frac{\epsilon}{2}n <\epsilon n.
\]
So $(\dagger)$ holds, as desired.
\end{proof}

\begin{remark}
From the proof of Theorem \ref{thm:famnotfim} we see that, given an $\cL$-formula $\phi(x;\ybar)$, there is a sequence $(\theta_n(x_1,\ldots,x_n))_{n=1}^\infty$ of $\cL$-formulas (over $\emptyset$) such that, for any $\epsilon>0$, if $n\geq n_{\epsilon,\phi}$ then $|p_E(\phi(x;\bbar))-\Av_{\abar}(\phi(x;\bbar))|<\epsilon$ for any $\abar\models\theta_n(\xbar)$ and $\bbar\in\cU^{\ybar}$. In particular, let $\theta_n(x_1,\ldots,x_n)$ describe the isomorphism type of the graph $G$ chosen with $\alpha_s(G)$ sufficiently small depending on $\epsilon$ and $\phi$. Of course, since $\alpha_s(G)$ is small, $G$ must contain (many) edges, and so $\theta_n(\xbar)\not\in p^{(n)}_E$.
\end{remark}

Next, we show that if $\cU\models T^2_s$ then $\fM_1^{\dfs}(\cU)$ coincides with $\fM_1^{\fa}(\cU)$, and is the convex hull of $p_E$ and $\fM_1^{\triv}(\cU)$. So $p_E$ is essentially the only non-trivial \df\ measure in one variable. We also observe that every frequency interpretation measure in one variable is trivial.

\begin{theorem}\label{thm:Ktmeas} Let $\cU\models T^2_s$.
\begin{enumerate}[$(a)$]
\item $\fM_1^{\fa}(\cU)=\fM_1^{\dfs}(\cU)=\{rp_E+(1-r)\mu:\mu\in\fM_1^{\triv}(\cU),~r\in[0,1]\}$.
\item $\fM_1^{\fim}(\cU)=\fM^{\triv}_1(\cU)$.
\end{enumerate}
\end{theorem}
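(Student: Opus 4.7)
The plan is to prove (a) by reducing it to the key claim that every atomless \df\ measure in one variable equals $p_E$, and then to derive (b) from (a) using the fact that $p_E$ is not generically stable together with closure of the relevant classes under localization. For (a) I need three inclusions: first, the right-hand side sits in $\fM_1^{\fa}(\cU)$, because $p_E\in\fM_1^{\fa}(\cU)$ by Theorem~\ref{thm:famnotfim}, any trivial measure $\sum_k r_k\delta_{a_k}$ is finitely approximated by a tuple from $M$ whose empirical distribution matches the weights $r_k$, and $\fM_1^{\fa}(\cU)$ is closed under convex combinations by concatenating weighted approximations; second, $\fM_1^{\fa}(\cU)\subseteq\fM_1^{\dfs}(\cU)$ by Fact~\ref{fact:props}(b); and third, the substantive inclusion $\fM_1^{\dfs}(\cU)\subseteq\{rp_E+(1-r)\mu:\mu\in\fM_1^{\triv}(\cU),\,r\in[0,1]\}$.

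For the substantive inclusion I would follow the atomic decomposition in the proof of Proposition~\ref{prop:localization}. Given $\mu\in\fM_1^{\dfs}(\cU)$, the set $S=\{b\in\cU:\mu(x=b)>0\}$ is countable, so writing $s=\sum_{b\in S}\mu(x=b)$ one gets $\mu=s\mu_1+(1-s)\nu$, where $\mu_1=s^{-1}\sum_{b\in S}\mu(x=b)\delta_b$ is trivial (when $s>0$) and $\nu$ is the normalized localization of $\mu$ at the Borel set $S_1(\cU)\setminus S$ (when $s<1$). Closure of $\fM_1^{\dfs}(\cU)$ under localization gives $\nu\in\fM_1^{\dfs}(\cU)$, and by construction $\nu(x=b)=0$ for every $b\in\cU$. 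The inclusion therefore reduces to the key claim that every atomless $\nu\in\fM_1^{\dfs}(\cU)$ equals $p_E$, which, using quantifier elimination and atomlessness, is further equivalent to $\nu(E(x,b))=0$ for all $b\in\cU$ (after absorbing measure-zero equality literals, every atom outside $p_E$ is bounded by some $\nu(E(x,b_i))$).

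The main obstacle is verifying this last claim; this is the only place where $K_s$-freeness is used nontrivially, and the argument is the $T^2_s$-adaptation of the random graph proof in Theorem~\ref{thm:RGmeas}. Fix $M\prec\cU$ over which $\nu$ is definable and finitely satisfiable, and suppose toward contradiction that $\nu(E(x,b_0))>0$ for some $b_0$. Apply Proposition~\ref{prop:dfsopp}(b) to obtain an $E^*$-formula $\psi(y)=\bigwedge_{m\in A}E(m,y)\wedge\bigwedge_{m\in B}\neg E(m,y)$ over $M$ with $\cU\models\psi(b_0)$ and $\nu(E(x,c))>0$ for every $c\models\psi$; consistency of $\psi$ in $T^2_s$ forces $A$ to be $K_{s-1}$-free. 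The crucial use of the theory is that $A$ being $K_{s-1}$-free makes the type asserting ``$E(m,y)$ for $m\in A$, $\neg E(m,y)$ for $m\in M\setminus A$, and $y\neq m$ for $m\in M$'' consistent, since $M\cup\{c\}$ remains $K_s$-free when $c$'s $M$-neighborhood is $A$; by saturation and quantifier elimination for $T^2_s$, realize some $c\in\cU$ with $\{m\in M:\cU\models E(m,c)\}=A$. Then $c\models\psi$, so $\nu(E(x,c))>0$, and atomlessness of $\nu$ together with $|A|<\infty$ gives $\nu(x\in A)=0$, hence $\nu(E(x,c)\wedge x\notin A)>0$. Finite satisfiability of $\nu$ in $M$ then produces some $m\in M\setminus A$ with $\cU\models E(m,c)$, contradicting the choice of $c$.

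Part (b) follows cleanly from (a). The inclusion $\fM_1^{\triv}(\cU)\subseteq\fM_1^{\fim}(\cU)$ is routine: for $\mu=\sum_k r_k\delta_{a_k}$, let $\theta_n(\bar x)$ assert that each coordinate lies in $\{a_1,\ldots,a_{N(n)}\}$ (with $N(n)\to\infty$ slowly) and that the empirical counts $|\{i:x_i=a_k\}|$ are within a shrinking tolerance of $r_kn$; the law of large numbers for $\mu^{(n)}$ forces $\mu^{(n)}(\theta_n)\to 1$, and condition~(i) of Definition~\ref{def:fim} follows from the straightforward estimate $|\Av_{\bar a}(\phi(x;b))-\mu(\phi(x;b))|\leq N(n)\cdot(\text{tolerance})+\sum_{k>N(n)}r_k$. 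For the converse, let $\mu\in\fM_1^{\fim}(\cU)$ and apply (a) to write $\mu=rp_E+(1-r)\mu_t$ with $\mu_t$ trivial. Since $\mu_t$ is supported on realized types while $p_E\in S_1(\cU)$ is non-realized, $\mu(\{p_E\})=r$; the singleton $\{p_E\}$ is Borel, so if $r>0$ then localizing $\mu$ at $\{p_E\}$ yields $p_E$ itself. Closure of $\fM_1^{\fim}(\cU)$ under localization would then force $p_E\in\fM_1^{\fim}(\cU)$, which by Proposition~\ref{prop:PTgs} means $p_E$ is generically stable, contradicting Theorem~\ref{thm:famnotfim}. Hence $r=0$ and $\mu=\mu_t$ is trivial.
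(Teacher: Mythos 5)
Your proof is correct and follows essentially the same approach as the paper's: an atomic decomposition of a dfs measure into a trivial part plus an atomless part, the key claim (via Proposition~\ref{prop:dfsopp}$(b)$, $K_{s-1}$-freeness of the positive parameter set, and finite satisfiability in $M$) that every atomless one-variable dfs measure equals $p_E$, and a localization argument combined with Proposition~\ref{prop:PTgs} and Theorem~\ref{thm:famnotfim} for part~$(b)$. You streamline the proof of the claim by normalizing $\psi$ to a single conjunction and directly constructing $c$ with $M$-neighborhood exactly $A$, which neatly avoids the paper's case split on whether the witness $b_0$ lies in $M$; this is a clean simplification closer in spirit to the paper's proof of Theorem~\ref{thm:RGmeas}.
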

\begin{proof}
Part $(a)$. Let $\mu\in\fM_1(\cU)$ be definable and finitely satisfiable over $M\prec\cU$. 
\vspace{5pt}

\noindent\emph{Claim:} If $\mu(x=b)=0$ for all $b\in\cU$, then $\mu=p_E$.

\noindent\emph{Proof:}
Assume $\mu(x=b)=0$ for all $b\in\cU$ and, toward a contradiction, suppose $\mu(E(x,b))>0$ for some $b\in\cU$. There are two cases. 

Suppose first that $b\not\in M$. Let $\psi(y)$ be an $\cL(M)$-formula such that $\psi(b)$ holds and, for any $c\in\cU$, if $\psi(c)$ holds then $\mu(E(x,c))>0$. Let $A\subset M$ be the finite set of parameters in $\psi(y)$. Since $b\not\in M$, we may find $c\in\cU$ such that $c\equiv_A b$ and $\neg E(m,c)$ for all $m\in M\backslash A$. Then $\psi(c)$ holds and so $\mu(E(x,c)\wedge x\not\in A)>0$. But $E(x,c)\wedge x\not\in A$ is not realized in $M$.

Now suppose $b\in M$. Let $X=\{m\in M:E(m,b)\}$. Then $X$ is $K_{s-1}$-free, so there is $c\in\cU\backslash M$ such that $E(m,c)$ for all $m\in X$. By the above, $\mu(\neg E(x,c))=1$, and so $\mu(\neg E(x,c)\wedge E(x,b))>0$. But $\neg E(x,c)\wedge E(x,b)$ is not realized in $M$.\claim
\vspace{5pt}

Now, let $S=\{b\in\cU:\mu(x=b)>0\}$. As in the proof of Proposition \ref{prop:localization}, $S$ is countable. By the claim, we may assume $S\neq\emptyset$, and so $\mu(S)>0$. Let $\nu=\frac{1}{\mu(S)}\sum_{b\in S}\mu(x=b)\delta_b$, and note that $\nu\in\fM_1^{\triv}(\cU)$. 

        Let $X=\cU\backslash S$. If $\mu(X)=0$ then $\mu=\nu$, and we are finished. So assume $\mu(X)>0$. Let $\mu_0$ be the localization of $\mu$ at $X$. Then $\mu_0$ is \df\ and $\mu_0(x=b)=0$ for all $b\in\cU$. By the claim, $\mu_0=p_E$. Note that $p_E(S)=0$ since $S$ is countable. Altogether, given $A\in\Def_1(\cU)$, we have
\begin{multline*}
\mu(A)=\mu(A\backslash S)+\mu(A\cap S)\\
=\mu(X)\mu_0(A\backslash S)+\mu(S)\nu(A\cap S)=\mu(X)p_E(A)+(1-\mu(X))\nu(A).
\end{multline*}
So $\mu=\mu(X)p_E+(1-\mu(X))\nu$, as desired.

Part $(b)$. Suppose $\mu\in\fM^{\fim}_1(\cU)$. Then $\mu\in \fM^{\dfs}_1(\cU)$ and so, by Theorem \ref{thm:famnotfim} and the claim in part $(a)$, we have $\mu(x=b)>0$ for some $b\in\cU$. So $\fM_1^{\fim}(\cU)=\fM^{\triv}_1(\cU)$ by Proposition \ref{prop:localization}.
\end{proof}

\subsection{$K^r_s$-free hypergraphs}\label{sec:hyp}

We have now seen that if $s>r\geq 3$ then $T^r_s$ is dfs-trivial, while $T^2_s$ is not dfs-trivial for any $s>2$. The change in behavior from $r=2$ to $r\geq 3$ is reminiscent of a similar disparity at the level of dividing lines. In particular, $T^2$ is simple, but $T^2_s$ is not simple for any $s\geq 3$ (in fact, $T^2_s$ has SOP$_3$ by Shelah \cite{Sh500}). On the other hand, $T^r$ and $T^r_s$ are both simple for any $s>r\geq 3$ (this was shown by Hrushovski \cite{Udibook}; see also \cite[Section 7.1]{CoFA}). 

Despite the fact that $T^r_s$ is dfs-trivial for $s>r\geq 3$, we can  find interesting behavior in these theories at the level of $\phi$-types. First, let us recall some notions. Let $T$ be a complete theory with monster model $\cU$, and fix an $\cL$-formula $\phi(x;y)$. We let $S_\phi(\cU)$ be the space of complete $\phi$-types over $\cU$. Given $p\in S_\phi(\cU)$, we say:
\begin{enumerate}[$(	1)$]
\item $p$ is \textbf{definable} if the set $\{b\in\cU^y:\phi(x;b)\in p\}$ is definable (and thus, the same is true for any Boolean combination of $\phi(x;y_i)$);

\item $p$ is \textbf{finitely satisfiable in $M\prec\cU$} if any finite subset of $p$ is realized in $M$;
\item $p$ is \textbf{finitely approximated} if there is $M\prec\cU$ such that, for any formula $\psi(x;z)$, which is a finite Boolean combination of $\phi(x;y_i)$, and any $\epsilon>0$, there are $a_1,\ldots,a_n\in M^x$ such that, for any $c\in\cU^z$, $|p(\psi(x;c))-\Av_{\abar}(\psi(x;c))|<\epsilon$.
\end{enumerate}
In the last definition, we view $p$ as a $\{0,1\}$-valued local Keisler measure on $\phi$-formulas. Each of these notions extends naturally to the space of all local Keisler measures, analogous to Definition \ref{def:properties}. See \cite{GannNIP} for details. 

In \cite{GannNIP}, the second author proved a local version of the equivalence of $(i)$ and $(ii)$ in Theorem \ref{thm:HPS}. Specifically, if $\phi(x;y)$ is NIP and $\mu$ is a local Keisler measure on $\phi$-formulas, which is \df\ (suitably defined), then $\mu$ is finitely approximated. We will show that the analogue of this fails for simple formulas. In fact, we will find a complete $\phi$-type in a simple theory (specifically, $T^r_s$ for $s>r\geq 3$) that is \df\, but is not finitely approximated. Before defining this type, we  recall some results from graph theory. First, we state the following corollary of the \emph{Ramsey property} for  the class of finite $K^r_s$-free $r$-graphs.

\begin{theorem}[Ne\v{s}et\v{r}il \& R\"{o}dl 1979 \cite{NesRodFN,NesRodHyp}]\label{thm:NesRod}
Given $s>r\geq 2$ and $n\geq 1$, there is a finite $K^r_s$-free $r$-graph $G$ such that any edge-coloring of $G$ with $n$ colors admits a monochromatic copy of $K^r_{s-1}$. 
\end{theorem}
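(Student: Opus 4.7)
The plan is to adapt the Ne\v{s}et\v{r}il--R\"{o}dl partite amalgamation method to produce the desired hypergraph. The approach is iterative: starting from $H_0 = K^r_{s-1}$, one builds up a sequence of $K^r_s$-free $r$-graphs whose edges become progressively ``Ramsey-controlled'' under $n$-colorings. Throughout, it is crucial to work within the subcategory of $K^r_s$-free $r$-graphs and use amalgamations that remain $K^r_s$-free.

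First, I would establish a \emph{partite Ramsey lemma}: for every $K^r_s$-free $r$-partite $r$-graph $H$ with a fixed $r$-partition of its vertex set, and every $n \geq 1$, there exists a $K^r_s$-free $r$-partite $r$-graph $G$ (together with an $r$-partition) such that any $n$-coloring of the edges of $G$ admits a partition-respecting monochromatic copy of $H$. This is the heart of the argument and is proved by induction on the number of edges of $H$; the base case of a single edge is handled via a Hales--Jewett-type (or Graham--Rothschild-type) pigeonhole argument on $r$-tuples drawn from large parts. Keeping the configuration $r$-partite throughout is what makes $K^r_s$-freeness easy to preserve at each stage, since the parts break any candidate $K^r_s$ into controllable pieces.

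Second, I would apply the partite lemma iteratively to assemble the non-partite object required by the theorem. Enumerate the finitely many isomorphism types of potential embeddings of $K^r_{s-1}$ into a target structure, and inductively construct $G_0 \subseteq G_1 \subseteq \cdots \subseteq G_k$, where at step $i$ one amalgamates several copies of $G_{i-1}$ over the previous layer using the partite lemma so that any $n$-coloring of $G_i$ yields a monochromatic copy of the $i$-th target configuration. After processing all isomorphism types, $G := G_k$ is the desired $K^r_s$-free $r$-graph satisfying $G \to (K^r_{s-1})^r_n$.

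The main obstacle is preserving $K^r_s$-freeness through each amalgamation step. Any candidate copy of $K^r_s$ in the amalgam must straddle multiple ``blocks,'' and one must rule this out by arranging the amalgamation to be sufficiently canonical (essentially a strong or free pushout in the category of $r$-partite $K^r_s$-free $r$-graphs) so that every $s$-set of vertices either lies inside a single block (where by induction no $K^r_s$ exists) or projects to an $s$-set in the smaller base structure (which is $K^r_s$-free by the previous stage). This combinatorial bookkeeping, tracking how copies of $K^r_s$ could hypothetically arise across block boundaries, is the most technically delicate part of the proof and is precisely what forces the partite framework on the inductive step.
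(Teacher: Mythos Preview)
The paper does not prove this theorem at all: it is stated as a cited result of Ne\v{s}et\v{r}il and R\"{o}dl (references \cite{NesRodFN,NesRodHyp}) and used as a black box in the proof of Theorem~\ref{thm:dfsnotfim}. So there is no ``paper's own proof'' to compare against.

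That said, your outline is a reasonable sketch of the actual Ne\v{s}et\v{r}il--R\"{o}dl argument. The partite lemma combined with iterated partite amalgamation is exactly their method, and the key point you identify---that the partite structure is what lets one verify $K^r_s$-freeness after each amalgamation---is correct. One caveat: your description of the iteration in the second step is somewhat garbled. One does not ``enumerate isomorphism types of potential embeddings of $K^r_{s-1}$''; rather, one fixes a single target $r$-graph (here $K^r_{s-1}$), takes a large ``picture'' containing many copies of it, and then processes the \emph{parts} of that picture one at a time, at each stage applying the partite lemma to amalgamate so that colorings become constant on copies crossing the current part. The bookkeeping is over parts (or over the edges of the picture), not over isomorphism types of embeddings. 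This is a presentational issue rather than a mathematical gap, but as written your step two would not quite assemble into a proof.
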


Next, we consider (vertex) colorings of weighted hypergraphs. In particular, given $r\geq 2$, a \emph{weighted $r$-graph} is a pair $H=(V,w)$ where $V$ is a finite vertex set and $w\colon [V]^r\to \R$ is a function (here $[V]^r$ is the set of $r$-element subsets of $V$). Suppose $H=(V,w)$ is a weighted $r$-graph. Set $w(V)=\sum_{\sigma\in [V]^r}w(\sigma)$. An \emph{$r$-coloring} of $H$ is a function $\chi\colon V\to[r]$. We say that an $r$-coloring $\chi$ \emph{splits} $\sigma\in [V]^r$ if $\chi(u)\neq\chi(v)$ for all distinct $u,v\in\sigma$. The \emph{weight} of an $r$-coloring $\chi$, denoted $w(\chi)$, is the sum of $w(\sigma)$ over all $\sigma\in [V]^r$ such that $\chi$ splits $\sigma$. The next fact is due to Erd\H{o}s and Kleitman \cite{EKcut} in the setting of unweighted hypergraphs.

\begin{lemma}\label{lem:maxcut}
Let $H=(V,w)$ be a finite weighted $r$-graph for some $r\geq 2$. Then there is an $r$-coloring $\chi$ of $H$ such that $w(\chi)\geq\frac{r!}{r^r}w(V)$.
\end{lemma}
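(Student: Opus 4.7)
The plan is to prove this by a standard probabilistic (first moment) argument, which is exactly how Erd\H{o}s and Kleitman handled the unweighted case; the weighted generalization requires no new ideas since linearity of expectation does not care about the sign of the summands.

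First, I would consider a uniformly random $r$-coloring $\chi\colon V\to[r]$, where each vertex is independently assigned a color drawn uniformly from $[r]$. For a fixed $\sigma\in[V]^r$, the event ``$\chi$ splits $\sigma$'' occurs precisely when the $r$ values $\chi(v)$ for $v\in\sigma$ are pairwise distinct, i.e., form a bijection between $\sigma$ and $[r]$. Since each of the $r^r$ functions from $\sigma$ to $[r]$ is equally likely, and $r!$ of them are bijections, the probability that $\chi$ splits $\sigma$ equals $\frac{r!}{r^r}$.

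Next, I would apply linearity of expectation. Writing $w(\chi)=\sum_{\sigma\in[V]^r}w(\sigma)\mathbf{1}[\chi\text{ splits }\sigma]$, we get
\[
\mathbb{E}[w(\chi)]=\sum_{\sigma\in[V]^r}w(\sigma)\cdot\Pr[\chi\text{ splits }\sigma]=\frac{r!}{r^r}\sum_{\sigma\in[V]^r}w(\sigma)=\frac{r!}{r^r}w(V).
\]

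Finally, since the random variable $w(\chi)$ takes a value at least its expectation on at least one coloring in the (finite) sample space, there exists some $r$-coloring $\chi$ with $w(\chi)\geq\frac{r!}{r^r}w(V)$, as required. The main ``obstacle'' is really just being careful that weights are allowed to be negative, but this causes no trouble because linearity of expectation and the pigeonhole ``some value is at least the mean'' both hold without any positivity hypothesis.
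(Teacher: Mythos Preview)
Your proof is correct and is essentially the same as the paper's: the paper phrases the argument as computing the average of $w(\chi)$ over all $r^n$ colorings (by swapping the order of summation) and concluding that some coloring attains at least the average, which is exactly your probabilistic first-moment argument in counting language. Your observation about negative weights is also apt, since the paper allows $w\colon [V]^r\to\mathbb{R}$.
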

\begin{proof}
Given an $r$-coloring $\chi$ of $H$ and $\sigma\in [V]^r$, let $w_\chi(\sigma)$ be $w(\sigma)$ if $\chi$ splits $\sigma$, and $0$ otherwise. So $w(\chi)=\sum_{\sigma\in [V]^r}w_\chi(\sigma)$. Let $n=|V|$. Then the number of $r$-colorings of $H$ is $r^n$ and, given $\sigma\in {[V]^r}$, the number of $r$-colorings of $H$ that  split $\sigma$ is $r^{n-r}r!$.  So we can compute the average weight of an $r$-coloring  of $H$ as follows: 
\[
\frac{1}{r^n}\sum_{\chi}w(\chi)=\frac{1}{r^n}\sum_{\chi}\sum_{\sigma}w_\chi(\sigma)=\frac{1}{r^n}\sum_{\sigma}\sum_{\chi}w_\chi(\sigma)=\frac{1}{r^n}\sum_{\sigma} \frac{r^{n}r!}{r^r}w(\sigma)=\frac{r!}{r^r}w(V).
\]
Therefore some $r$-coloring of $H$ has weight at least $\frac{r!}{r^r}w(V)$.
\end{proof}

Now we fix $s>r\geq 3$. Let $M\models T^r_s$ be the \Fraisse\ limit of the class of finite $K^r_{s}$-free $r$-graphs, and let $\cU\succ M$ be a sufficiently saturated elementary extension. Let $\phi(x_1,\ldots,x_{r-1};y)$ be the formula $\neg R(x_1,\ldots,x_{r-1},y)\wedge \bigwedge_{i\neq j}x_i\neq x_j$. We define $p_R\in S_\phi(\cU)$ to be the complete $\phi$-type containing $\phi(\xbar;b)$ for all $b\in\cU$. 

\begin{theorem}\label{thm:dfsnotfim}
The $\phi$-type $p_R$ is \df, but  not finitely approximated.
\end{theorem}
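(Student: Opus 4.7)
The plan is to split the argument into three pieces: definability of $p_R$, finite satisfiability in any small model, and the failure of finite approximation. Definability is essentially formal: $p_R$ is pinned down by the single datum that $\phi(\xbar;b)\in p_R$ for every $b\in\cU$, so for any finite Boolean combination $\psi(\xbar;z_1,\ldots,z_k)$ of $\phi$-instances the set $\{\bar c\in\cU^k:\psi(\xbar;\bar c)\in p_R\}$ is either $\cU^k$ or empty, and hence definable.

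For finite satisfiability, I would fix a small $M\prec\cU$ and a finite set $\{b_1,\ldots,b_n\}\subset\cU$, and assume toward a contradiction that no distinct $a_1,\ldots,a_{r-1}\in M$ satisfies $\neg R(\abar,b_i)$ for every $i$. This produces a coloring $\chi\colon[M]^{r-1}\to[n]$ by picking, for each $(r-1)$-subset $\sigma$, some $i(\sigma)$ with $R(\sigma,b_{i(\sigma)})$. The decisive input is the structural form of the Ne\v{s}et\v{r}il--R\"{o}dl Ramsey theorem for finite $K^r_s$-free $r$-graphs \cite{NesRodHyp}, applied with auxiliary substructure a bare $(r-1)$-vertex set and target $K^r_{s-1}$: there exists a finite $K^r_s$-free $r$-graph $C$ such that every $n$-coloring of $[V(C)]^{r-1}$ admits an $(s-1)$-subset $S\subseteq V(C)$ that is a $K^r_{s-1}$-clique in $C$ and whose $(r-1)$-subsets all receive a common color. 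Embedding $C$ into $M$ via the Fra\"{i}ss\'{e} property of $T^r_s$ and restricting $\chi$ yields such an $S\subseteq M$ with common color $i_0$; then $S\cup\{b_{i_0}\}$ is a $K^r_s$ in $\cU$ (its $r$-subsets inside $S$ are edges by the $K^r_{s-1}$ structure, and those containing $b_{i_0}$ are edges by monochromaticity), contradicting $\cU\models T^r_s$.

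For the failure of finite approximation, I would fix $\epsilon<\tfrac{(r-1)!}{(r-1)^{r-1}}$ and suppose for contradiction that $\abar_1,\ldots,\abar_N$ is a $(\phi,\epsilon)$-approximation for $p_R$ in a small model $M_0$; discarding the tuples with repeated entries (at a lower-order cost in $\epsilon$), we may assume each $\abar_i$ has distinct entries. Since $p_R(\phi(\xbar;b))=1$ for all $b$, we obtain $\#\{i:R(\abar_i,b)\}<\epsilon N$ for every $b\in\cU$. Let $V\subset M_0$ be the set of vertices appearing in the $\abar_i$, and define the weighted $(r-1)$-hypergraph $(V,w)$ by $w(\sigma)=\#\{i:\{\abar_i\}=\sigma\}$, so $w(V)=N$. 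Lemma \ref{lem:maxcut} yields an $(r-1)$-coloring $\chi\colon V\to[r-1]$ with split weight $w(\chi)\geq\tfrac{(r-1)!}{(r-1)^{r-1}}N$. I would then consider the partial $\cL(V)$-type $q(y)$ asserting $R(\sigma,y)$ for every $\sigma\in[V]^{r-1}$ split by $\chi$, $\neg R(\sigma,y)$ for every non-split $\sigma$, and $y\neq v$ for $v\in V$. The induced $r$-graph on $V\cup\{y\}$ is $K^r_s$-free: a forbidden $K^r_s$ must contain $y$ and so include an $(s-1)$-subset $T'\subseteq V$ with every $(r-1)$-subset split by $\chi$; but any two same-colored vertices of $T'$ can be extended (using $s-1\geq r$) to a non-split $(r-1)$-subset, so each color class of $\chi$ contains at most one vertex of $T'$, forcing $|T'|\leq r-1<s-1$, a contradiction. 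Hence $q$ is consistent with $T^r_s$ and realized by some $b\in\cU$ by saturation, yielding $\#\{i:R(\abar_i,b)\}=w(\chi)\geq\tfrac{(r-1)!}{(r-1)^{r-1}}N$ and contradicting the approximation bound.

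The main obstacle I anticipate is the Ramsey step in the finite satisfiability part: the monochromatic $(s-1)$-subset $S$ must simultaneously form a $K^r_{s-1}$ in $M$, which rules out a direct application of the edge-coloring statement recorded as Theorem \ref{thm:NesRod} and forces the structural (substructure-based) form of Ne\v{s}et\v{r}il--R\"{o}dl with a bare $(r-1)$-vertex auxiliary substructure. The remaining work is the consistency check for $q$, which cleanly exploits $s-1\geq r$.
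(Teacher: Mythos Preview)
Your non-approximation argument is essentially the paper's: both pass to the weighted $(r-1)$-graph on the support of the candidate tuples, apply Lemma~\ref{lem:maxcut} to obtain a coloring with split weight at least $\frac{(r-1)!}{(r-1)^{r-1}}$ of the total, and realize in $\cU$ a new vertex adjacent exactly to the split $(r-1)$-sets, checking $K^r_s$-freeness by pigeonhole on the $r-1$ colors. One minor point: the hedge ``at a lower-order cost in $\epsilon$'' is unnecessary, since tuples with repeated entries fail $\phi(\xbar;b)$ for every $b$ and so only help the contradiction; the paper simply keeps them and writes $|\{t:\neg\phi(\abar^t;b)\}|\geq (n-m)+\epsilon_r m\geq\epsilon_r n$.

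For finite satisfiability the two routes diverge, exactly at the obstacle you flag. You invoke the structural Ramsey property for $K^r_s$-free $r$-graphs with substructure a bare $(r-1)$-set and target $K^r_{s-1}$; this is correct but imports more than Theorem~\ref{thm:NesRod} as recorded. The paper instead drops one dimension: it applies Theorem~\ref{thm:NesRod} to $K^{r-1}_s$-free $(r-1)$-graphs, obtaining an $(r-1)$-graph $G=(W,E)$ with the edge-Ramsey property, and then defines an $r$-graph $H$ on $W$ by declaring $R(\sigma)$ for $\sigma\in[W]^r$ exactly when $[\sigma]^{r-1}\subseteq E$. This $H$ is automatically $K^r_s$-free, and any $K^{r-1}_{s-1}$ in $G$ is simultaneously a $K^r_{s-1}$ in $H$. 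After embedding $H$ in $M$ away from the $b_i$, the assumed covering of $[W]^{r-1}$ by the witness sets restricts to an edge-coloring of $G$, so Theorem~\ref{thm:NesRod} applies verbatim and the monochromatic clique already carries the $r$-graph structure you need. Your route is shorter to state but leans on a heavier Ramsey input; the paper's construction stays entirely within the quoted theorem.
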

\begin{proof}
It is clear that $p_R$ is definable. We show that $p_R$ is finitely satisfiable in $M$. Fix $b_1,\ldots,b_n\in\cU$. We want to find $a_1,\ldots,a_{r-1}\in M$ such that $\neg R(\abar,b_i)$ holds for all $i\in[n]$, and $a_i\neq a_j$ for all distinct $i,j\in[r-1]$. 

By Theorem \ref{thm:NesRod}, there is a finite $K^{r-1}_s$-free $(r-1)$-graph $G=(W,E)$ such that any edge-coloring of $G$ with $n$ colors admits a monochromatic copy of $K^{r-1}_{s-1}$. Define an $r$-graph $H=(W,R)$ such that, given $\sigma\in [W]^r$, $R(\sigma)$ holds if and only if $[\sigma]^{r-1}\seq E$. Then $H$ is $K^r_s$-free since, if $A\in [W]^s$ is such that $[A]^r\seq R$ then $[A]^{r-1}\seq E$. So we may assume that $H$ is an induced subgraph of $M\backslash\{b_1,\ldots,b_n\}$. 

For $i\in[n]$, let $C_i=\{\tau\in [W]^{r-1}:R(\tau,b_i)\}$. Toward a contradiction, suppose $[W]^{r-1}= C_1\cup\ldots\cup C_n$. Then we can define an edge coloring $c\colon E\to [n]$ such that $c(\tau)=\min\{i\in [n]:\tau\in C_i\}$. By choice of $G$, there is $A\in[W]^{s-1}$ and $\ell\in [n]$ such that $[A]^{r-1}\seq E$ and $c(\tau)=\ell$ for all $\tau\in[A]^{r-1}$. But then $[A\cup\{b_\ell\}]^r\seq R$, contradicting that $\cU$ is $K^r_s$-free. So we may fix some $\sigma\in [W]^{r-1}\backslash (C_1\cup\ldots\cup C_n)$. Let $\sigma=\{a_1,\ldots,a_{r-1}\}$. Then $a_1,\ldots,a_{r-1}\in M$, $\neg R(\abar,b_i)$ for all $i\in[n]$, and $a_i\neq a_j$ for all distinct $i,j\in[r-1]$, as desired.

To show that $p_R$ is not finitely approximated, we fix $\abar^1,\ldots,\abar^n\in\cU^{r-1}$ and find some $b\in\cU$ such that $|\{t\in [n]:\phi(\abar^t;b)\}|< (1-\epsilon_r)n$, where $\epsilon_r=(r-1)^{1-r}(r-1)!$. After re-indexing if necessary, we may assume there is some $m\leq n$ such that, given $t\in[n]$, we have $|\{a^t_1,\ldots,a^t_{r-1}\}|=r-1$ if and only if $t\leq m$. 

Let $V=\{a^t_i:t\in[m],~i\in[r-1]\}$. For $\sigma\in [V]^{r-1}$, set
\[
I_\sigma=\left\{t\in [m]:\{a^t_1,\ldots,a^t_{r-1}\}=\sigma\right\}.
\]
 Define the weight function $w\colon [V]^{r-1}\to\{0,1,\ldots,m\}$ such that $w(\sigma)=|I_\sigma|$. Note that $\{I_\sigma:\sigma\in [V]^{r-1}\}$ is a partition of $[m]$ (with some $I_\sigma$ possibly empty), and so $w(V)=m$. By Lemma \ref{lem:maxcut}, there is an $(r-1)$-coloring $\chi$ of $(V,w)$ such that $w(\chi)\geq\epsilon_r m$. Let $\Sigma=\{\sigma\in[V]^{r-1}:\text{$\chi$ splits $\sigma$}\}$. 
 
 We now define an $r$-graph $(V',R')$  extending $(V,R)$. Let $V'=V\cup\{v_*\}$, where $v_*$ is a vertex not in $V$, and set $R'=R\cup\{\sigma\cup\{v_*\}:\sigma\in\Sigma\}$. Toward a contradiction, suppose $(V',R')$ is not $K^r_s$-free. Then there is $A\in[V']^s$ such that $[A]^r\seq R'$. So $v_*\in A$ since $(V,R)$ is $K^r_s$-free. Since $|A\cap V|=s-1>r-1$, there are distinct $v_1,v_2\in A\cap V$ such that $\chi(v_1)=\chi(v_2)$. Fix $\sigma\in [A\cap V]^{r-1}$ such that $v_1,v_2\in \sigma$. Then $\chi$ does not split $\sigma$, and so $\sigma\cup\{v_*\}\not\in R'$, which contradicts $[A]^r\seq R'$. 
 
 Finally, since $(V',R')$ is $K^r_s$-free, it follows that there is some $b\in\cU$ such that, given $\sigma\in [V]^{r-1}$,  $R(\sigma,b)$ holds if and only if $\sigma\in\Sigma$. Let $I=\{t\in[m]:R(\abar^t,b)\}$. Then $I=\bigcup_{\sigma\in\Sigma}I_{\sigma}$, and so $|I|=w(\chi)\geq \epsilon_rm$. So 
\[
|\{t\in [n]:\neg\phi(\abar^t;b)\}|= |I\cup \{m+1,\ldots,n\}|\geq \epsilon_r m+n-m\geq\epsilon_r  n,
\]
as desired. 
\end{proof}

Note that $p_R$ does not extend to a \emph{global} \df\ measure, since $T^r_s$ is dfs-trivial and $p_R$ cannot be extended to a global trivial measure.

\begin{remark}
The main reason to use hypergraphs in the above arguments was to work in a simple theory. However, a similar situation could be constructed in the theory $T^2_s$ for $s\geq 4$. Specifically, let $\phi(x,y;z)$ be  $\neg(E(x,z)\wedge E(y,z))\wedge x\neq y$, and let $p\in S_\phi(\cU)$ be the complete $\phi$-type containing $\phi(x,y;b)$ for all $b\in\cU$. Then an argument similar to the $r=3$ case of Theorem \ref{thm:dfsnotfim} shows that $p$ is \df, but not finitely approximated.
\end{remark}

In light of all of the examples above, we make the following conjecture and ask some questions.

\begin{conjecture} 
There is a theory $T$, and a Keisler measure $\mu\in\fM_x(\cU)$ such that $\mu$ is \df, but not finitely approximated. 
\end{conjecture}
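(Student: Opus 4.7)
The plan is to promote the local $\phi$-type example $p_R$ of Theorem \ref{thm:dfsnotfim} to a global Keisler measure by working in a richer theory. The fundamental obstacle is that $T^r_s$ itself is \df-trivial (Corollary \ref{cor:PIex}), so every global \df\ measure there is a countable convex combination of Dirac measures and hence automatically finitely approximated. Thus one must locate a \df-nontrivial theory that nevertheless admits a hypergraph-style Erd\H{o}s--Kleitman obstruction at the global level.

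A natural first attempt is to consider a parameterized analog of $T^r_s$ in the style of $T^*_{\feq}$: let $\cL$ have sorts $O$ (objects) and $P$ (parameters), together with an $(r+1)$-ary relation $R(x_1,\ldots,x_r;p)$ such that, for each $p\in P$, $R(\cdot;p)$ is a $K^r_s$-free $r$-graph on $O$, and take the (hopefully existing) model companion $T_R$. For a generic $p_0 \in P(\cU)$, the local type $p_R^{(p_0)}$ analogous to Theorem \ref{thm:dfsnotfim} should be \df\ but not finitely approximated in the $p_0$-fiber. One would then try to construct a global Keisler measure $\mu \in \fM_{\xbar}(\cU)$ on $(r-1)$-tuples in $O$ by concentrating the ``$O$-direction'' on such a fiber-type while using a nontrivial \df\ measure in the $P$-direction to break \df-triviality.

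The key steps are then: (i) verify $T_R$ is well-behaved (e.g., simple or $\NSOP_1$, with sufficient quantifier elimination), (ii) construct $\mu$ and show it is globally \df, which requires the local finite satisfiability derived from the Ne\v{s}et\v{r}il--R\"{o}dl Ramsey theorem (Theorem \ref{thm:NesRod}) to hold uniformly across parameters, and (iii) adapt the Erd\H{o}s--Kleitman weighted coloring argument from Theorem \ref{thm:dfsnotfim} to show that, given any candidate $\abar \in (M^{r-1})^n$, one can produce a formula $\phi(\xbar;c)$ witnessing the failure of $(\phi,\epsilon)$-approximation for $\mu$.

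The main obstacle is the inherent tension between steps (ii) and (iii). Producing a nontrivial global \df\ measure typically requires some averaging (say, over $P$), but averaging tends to smooth out the fragile, intrinsically local Ramsey-theoretic obstruction that prevents finite approximation --- indeed, the second author's local NIP result already suggests that the two properties want to cooperate globally. Balancing these features will likely require a very careful choice of theory and measure --- perhaps involving a bounded-fiber or ``quasi-finite'' parameterization so that the global measure retains enough locality to inherit the hypergraph obstruction from the local example --- or else genuinely new ideas beyond the existing toolkit. Given that the authors themselves flag this as a conjecture left open by the paper, one should expect the argument to be significantly harder than any local variant, and possibly to require a theory with $\TP_2$ in view of the pattern observed in the paper's examples.
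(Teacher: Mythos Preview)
The statement you were asked to prove is labeled a \emph{Conjecture} in the paper, and the paper provides no proof of it whatsoever; it is explicitly left open. So there is no ``paper's own proof'' against which to compare your attempt.

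Your proposal is, correspondingly, not a proof but a research programme, and you yourself acknowledge as much in the final paragraph. The outline you give is reasonable as a description of where one might look: parameterizing the hypergraph example to escape \df-triviality while retaining the Erd\H{o}s--Kleitman obstruction is a natural strategy, and you correctly identify the central tension --- that the averaging needed to produce a nontrivial global \df\ measure is exactly the sort of operation that tends to destroy the local obstruction to finite approximation. But none of steps (i)--(iii) is carried out, and the paper itself offers no hint that they can be; indeed, the paper's discussion immediately following the conjecture suggests the authors have no candidate construction in hand. Your observation about $\TP_2$ echoes the paper's own Question 5.12, which asks whether such an example can exist in a simple or $\NTP_2$ theory.

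In short: there is nothing to grade here against the paper, because the paper proves nothing about this statement. Your write-up is a sensible sketch of the difficulties, not a proof, and should be presented as such.
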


\begin{question} 
Is there a simple (or even $\NTP_2$) theory $T$ and a \emph{global} Keisler measure $\mu\in\fM_x(\cU)$ such that either $\mu$ is \df\ but not finitely approximated, or $\mu$ is finitely approximated but not a frequency interpretation measure? Is there a type in a simple (or $\NTP_2$) theory with either of these properties?
\end{question}

\bibliographystyle{amsplain}

\end{document}